\newtheorem{theorem}{Theorem}[section]
\newtheorem{lemma}{Lemma}[section]
\newtheorem{problem}{Problem}[section]
\newtheorem{corollary}{Corollary}[section]
\newtheorem{remark}{Remark}[section]
\newtheorem{claim}{Claim}
\renewcommand\proofname{\it{Proof}}
\title{\bf Distance-regular Cayley graphs over dicyclic groups}
\author{{Xueyi Huang$^{a,b}$, Kinkar Chandra Das$^{b,}$\footnote{Corresponding author.}\setcounter{footnote}{-1}\footnote{\emph{E-mail address:} huangxymath@163.com (X. Huang), kinkardas2003@gmail.com (K. C. Das), lulumath@csu.edu.cn (L. Lu).}\ \ and Lu Lu$^{c}$}\\[2mm]
\small $^a$School of Mathematics, East China University of Science and Technology, \\
\small  Shanghai 200237, P. R. China\\
\small $^b$Department of Mathematics, Sungkyunkwan University,\\
\small Suwon 16419, Republic of Korea\\
\small $^c$School of Mathematics and Statistics, Central South University,\\
\small Changsha, Hunan, 410083, P. R. China
}
\date{}
\begin{document}
\maketitle

\begin{abstract}
The characterization of distance-regular Cayley graphs originated from the problem of identifying strongly regular Cayley graphs, or equivalently, regular partial difference sets. In this paper, a classification of  distance-regular Cayley graphs on dicyclic groups is obtained. More specifically, it is shown that  every distance-regular Cayley graph on a dicyclic group is a complete graph, a complete multipartite graph, or a non-antipodal bipartite distance-regular graph with diameter $3$ satisfying some additional conditions.

\par\vspace{2mm}

\noindent{\bfseries Keywords:} Distance-regular graph, Cayley graph, Dicyclic group
\par\vspace{1mm}

\noindent{\bfseries 2010 MSC:} 05E30, 05C25
\end{abstract}

\section{Introduction}\label{section::1}

Let $G$ be a finite group with identity $1$, and let $S$ be a subset of $G\setminus\{1\}$ such that  $S=S^{-1}:=\{s^{-1}\mid s\in S\}$. The \textit{Cayley graph} $\mathrm{Cay}(G,S)$ is defined as the graph with vertex set $G$, and with an edge joining two vertices $g,h\in G$ if and only if $g^{-1}h\in S$. Here $S$ is called the \textit{connection set} of $\mathrm{Cay}(G,S)$. It is known that $\mathrm{Cay}(G,S)$ is connected if and only if  $\langle S \rangle=G$, and that $G$ acts regularly on the vertex set of $\mathrm{Cay}(G,S)$ by left multiplicity. If $G$ is a cyclic (resp. dihedral) group, then $\mathrm{Cay}(G,S)$ is called a \textit{circulant} (resp. \textit{dihedrant}).

Let $\Gamma$ be a connected graph with vertex set $V(\Gamma)$. The \textit{distance} $\partial_\Gamma(u,v)$ between two vertices $u,v$ of $\Gamma$ is the length of a shortest path connecting them in $\Gamma$, and the \textit{diameter} $d_\Gamma$ of $\Gamma$ is the maximum distance  in $\Gamma$. For $v\in V(\Gamma)$, let $N_i^\Gamma(v)$ denote the set of vertices at distance $i$ from $v$ in $\Gamma$. In particular, we denote $N^\Gamma(v)=N_1^\Gamma(v)$. When $\Gamma$ is clear from the context, we use  $\partial(u,v)$, $d$, $N_i(v)$ and $N(v)$ instead of  $\partial_\Gamma(u,v)$, $d_\Gamma$, $N_i^\Gamma(v)$ and $N^\Gamma(v)$, respectively. For  $u,v\in V(\Gamma)$ with $\partial(u,v)=i$ ($0\leq i\leq d$), let
$$
c_i(u,v)=|N_{i-1}(u)\cap N(v)|,~~ a_i(u,v)=|N_{i}(u)\cap N(v)|, ~~ b_i(u,v)=|N_{i+1}(u)\cap N(v)|.
$$
Here  $c_0(u,v)=b_d(u,v)=0$. Then $\Gamma$ is called \textit{distance-regular} if  $c_i(u,v)$, $b_i(u,v)$ and $a_i(u,v)$ do not depend on the choice of $u,v$ with $\partial(u,v)=i$, that is,  depend only on the distance $i$ between $u$ and $v$, for all $0\leq i\leq d$.  

For a distance-regular graph $\Gamma$ with diameter $d$, we denote  $c_i=c_i(u,v)$, $a_i=a_i(u,v)$ and $b_i=b_i(u,v)$, where $u,v\in V(\Gamma)$ with $\partial(u,v)=i$. By definition, $\Gamma$ is a regular graph with valency $k=b_0$, and
$a_i+b_i+c_i=k$ for $0\leq i\leq d$.  The array $\{b_0,b_1,\ldots,b_{d-1};c_1,c_2,\ldots,c_d\}$ is called the \textit{intersection array} of $\Gamma$. In particular, $\lambda=a_1$ is the number of common neighbors between two adjacent vertices in $\Gamma$, and $\mu=c_2$ is the number of common neighbors between two vertices  at distance $2$ in $\Gamma$.  A distance-regular graph on $n$ vertices with valency $k$ and diameter $2$ is also called a  \textit{strongly regular graph} with parameters $(n,k,\lambda=a_1,\mu=c_2)$. A distance-regular graph is called \textit{non-trivial} if it does not belong to any of the following classes: complete graphs, complete multipartite graphs, complete bipartite graphs without a $1$-factor, and cycles.

After observing some beautiful combinatorial properties of distance-transitive graphs, Biggs introduced the concept of  distance-regular graphs (see the monograph \cite{B74}  from 1974). In the past several decades, distance-regular graphs played an important role in the study of design theory and coding theory, and were closely linked to some other subjects such as finite group theory, representation theory, and association schemes. For more detailed results on combinatorial or algebraic properties of distance-regular graphs, we refer the reader to \cite{BCN89,DKT16}, and references therein.

As an extension of the problem of characterizing  strongly regular Cayley graphs (or equivalently, regular partial difference sets  \cite{M94}),  Miklavi\v{c} and Poto\v{c}nik  \cite{MP07} (see also \cite[Problem 71]{DKT16}) proposed the following problem.
\begin{problem}\label{prob::main}
For a class of groups $\mathcal{G}$, determine all distance-regular graphs, which are Cayley graphs on a group in $\mathcal{G}$.
\end{problem}

For strongly regular Cayley graphs, a classic work is that all strongly regular circulants were determined by Bridges and Mena \cite{BM79}, Ma \cite{M84}, and partially by Maru\v{s}i\v{c} \cite{M89}. Also, the strongly regular Cayley graphs on $\mathbb{Z}_{p^n}\times \mathbb{Z}_{p^n}$ were classified by Leifman and Muzychuck \cite{LM05}. However, the strongly regular Cayley graphs on general groups, even for abelian groups, are far from being completely characterized.

With regard to Problem  \ref{prob::main},  Miklavi\v{c} and Poto\v{c}nik \cite{MP03,MP07} (almost) classified the distance-regular circulants or dihedrants. Miklavi\v{c} and \v{S}parl \cite{MS14,MS20} characterized the distance-regular Cayley graphs  on abelian groups or generalized dihedral groups under the condition that the connection set is minimal with respect to some element. Abdollahi, van Dam and Jazaeri \cite{ADJ17} determined the distance-regular Cayley graphs of diameter at most three with least eigenvalue $-2$. Very recently, van Dam and Jazaeri \cite{DJ19,DJ21} determined some distance-regular Cayley graphs with small valency, and provided some characterizations for  bipartite distance-regular Cayley graphs with diameter  $3$ or $4$. 

Inspired by the work of Miklavi\v{c} and \v{S}parl \cite{MP03,MP07}, in this paper, we mainly focus on the characterization of distance-regular Cayley graphs on dicyclic groups.  For a positive integer $n$, the \textit{dicyclic group} $\mathrm{Dic}_n$ is defined by
\begin{equation*}
\mathrm{Dic}_n=\langle \alpha,\beta \mid \alpha^{2n}=1, \beta^2=\alpha^n, \beta^{-1}\alpha\beta=\alpha^{-1}\rangle.
\end{equation*}
Clearly, $\mathrm{Dic}_n=\langle\alpha\rangle\cup \langle\alpha\rangle\beta$ is a non-abelian group of order $4n$ if $n>1$, and $\mathrm{Dic}_n\cong\mathbb{Z}_4$ if $n=1$. Also, $\alpha^n$ is the unique element of order $2$ in $\mathrm{Dic}_n$. For simplicity, Cayley graphs on dicyclic groups are called  \textit{dicirculants}. For $j\in \mathbb{Z}_{2n}$ and  $A\subseteq \mathbb{Z}_{2n}$, we denote  $j+A=\{j+i\mid i\in A\}$, $jA=\{j\cdot i\mid i\in A\}$, $-A=\{-i\mid i\in A\}$, $\alpha^A=\{\alpha^i\mid i\in A\}$ and $\alpha^A\beta=\{\alpha^i\beta\mid i\in A\}$. Then every dicirculant on $4n$ vertices has the form $\mathrm{Cay}(\mathrm{Dic}_n,\alpha^R\cup \alpha^T\beta)$, where $R$ and $T$ are subsets of $\mathbb{Z}_{2n}$ such that  $0\not\in R$, $R=-R$ and $T=n+T$. For convenience, we denote $\mathrm{Dic}(n,R,T):=\mathrm{Cay}(\mathrm{Dic}_n,\alpha^R\cup \alpha^T\beta)$.  The main result is as follows.

\begin{theorem}\label{thm::main}
Let $\Gamma$ be a dicirculant on $4n$ vertices. Then $\Gamma$ is distance-regular if and only if it is isomorphic to one of the following graphs:
\begin{enumerate}[$(i)$]\setlength{\itemsep}{0pt}
\item the complete graph $K_{4n}$;
\item the complete multipartite graph $K_{t\times m}$  with $tm=4n$, which is the complement of the disjoint union of  $t$ 
copies of the complete graph $K_m$;
\item the graph $\mathrm{Dic}(n,R,T)$ for even $n$, where $R=-R$ and $T=n+T$ are non-empty subsets of  $1+2\mathbb{Z}_{2n}$ such that $|R\cap T|<n$ and $|R\cap (i+R)|+|T\cap (i+T)|=2|(j+R)\cap T|=2|R\cap T|$ for all $i,j\in 2\mathbb{Z}_{2n}\setminus\{0\}$.
\end{enumerate}
In particular, the graph in  $(iii)$ is a non-antipodal bipartite non-trivial distance-regular graph with  diameter $3$.
\end{theorem}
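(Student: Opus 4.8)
The plan is to prove the two directions separately. \emph{Sufficiency.} That the graphs in $(i)$ and $(ii)$ occur is immediate: $K_{4n}=\mathrm{Cay}(\mathrm{Dic}_n,\mathrm{Dic}_n\setminus\{1\})$ and $K_{t\times m}=\mathrm{Cay}(\mathrm{Dic}_n,\mathrm{Dic}_n\setminus H)$ for a subgroup $H$ of order $m$ (one exists for every $m\mid 4n$), and both are distance-regular. For $(iii)$, let $\Gamma=\mathrm{Dic}(n,R,T)$ with $n$ even and $R,T\subseteq 1+2\mathbb{Z}_{2n}$ as stated. Since $n$ is even, $H=\langle\alpha^2,\beta\rangle$ has index $2$; as $R,T$ consist of odd residues, the connection set lies entirely in the coset $\alpha H$, so $\Gamma$ is bipartite with the two $2n$-element parts $H,\alpha H$. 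Identifying vertices with $\mathbb{Z}_{2n}\times\{0,1\}$ so that $(i,\varepsilon)\sim(j,\varepsilon)$ iff $i-j\in R$ and $(i,0)\sim(j,1)$ iff $j-i\in T$, a short count (using $R=-R$) shows that two distinct vertices of the same part have $|R\cap(\delta+R)|+|T\cap(\delta+T)|$ common neighbours if they lie in the same layer at even nonzero offset $\delta$, have $2|(\delta+R)\cap T|$ if they lie in opposite layers at offset $\delta$, and have $2|R\cap T|$ in the case of the pair $(\alpha^{0},\beta)$; the three-part identity in $(iii)$ asserts precisely that all of these equal a common value $\mu=2|R\cap T|$, and this identity together with $R\neq\emptyset$ forces $\mu\ge 1$. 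Hence any two distinct vertices of a part lie at distance exactly $2$, and as every vertex has a neighbour the graph $\Gamma$ is connected of diameter at most $3$; moreover $R=-R$, $T=n+T$ and $R,T\subseteq1+2\mathbb{Z}_{2n}$ make $|R|$ and $|T|$ even, while $|R\cap T|<n$ gives $|R\cup T|\le n$, so $k=|R|+|T|\le 2n-2$, the set of vertices at distance $3$ from a fixed vertex has size $2n-k\ge 2$, and the diameter is exactly $3$. The remaining intersection numbers are then forced — $a_i=0$ by bipartiteness, $c_1=1$, $c_2=\mu$, $c_3=k$ as in any bipartite distance-regular graph of diameter $3$, and the $b_i$ from $a_i+b_i+c_i=k$ — so $\Gamma$ is distance-regular; it is non-antipodal since $2n-k\ge 2$, and being non-antipodal, bipartite and of diameter $3$ it is neither a complete graph, a complete multipartite graph, a complete bipartite graph without a $1$-factor, nor a cycle, hence non-trivial.

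\emph{Necessity.} Let $\Gamma=\mathrm{Dic}(n,R,T)$ be distance-regular of diameter $d$; connectedness forces $\langle\alpha^R\cup\alpha^T\beta\rangle=\mathrm{Dic}_n$, hence $T\neq\emptyset$. I would rely on the common-neighbour formulas above, which hold for every $d$, together with the spectrum of $\Gamma$ computed from the irreducible representations of $\mathrm{Dic}_n$: the four one-dimensional characters, and the $n-1$ two-dimensional representations $\rho_h$ ($1\le h\le n-1$) with $\rho_h(\alpha)=\mathrm{diag}(\zeta^{h},\zeta^{-h})$ and $\rho_h(\beta)=\begin{psmallmatrix}0&1\\(-1)^h&0\end{psmallmatrix}$, where $\zeta$ is a primitive $2n$-th root of unity. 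The crucial arithmetic fact is that for $h$ odd the character sum $\sum_{t\in T}\zeta^{ht}$ vanishes, since $T=n+T$ and $\zeta^{hn}=-1$; hence $\rho_h$ contributes the single eigenvalue $\sum_{r\in R}\zeta^{hr}$ with multiplicity $4$, and as a distance-regular graph of diameter $d$ has exactly $d+1$ distinct eigenvalues with prescribed multiplicities, this sharply constrains $R$ and $T$.

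I then distinguish cases by $d$. If $d=1$, then $\Gamma=K_{4n}$, case $(i)$. If $d=2$, then $\Gamma$ is a strongly regular dicirculant and the two common-neighbour functions take only the values $\lambda=a_1$ and $\mu=c_2$; the goal is to prove $\mu=k$, which makes $\Gamma$ a connected strongly regular graph with $\mu=k$, that is, complete multipartite, giving case $(ii)$. If $d\ge 3$, one first shows $\Gamma$ is bipartite; a bipartite Cayley graph on $\mathrm{Dic}_n$ corresponds to an index-$2$ subgroup disjoint from the connection set, of which for $n$ odd there is only $\langle\alpha\rangle$ (forcing $R=\emptyset$), while for $n$ even there are additionally $\langle\alpha^2,\beta\rangle$ and $\langle\alpha^2,\alpha\beta\rangle$ (forcing $R,T\subseteq1+2\mathbb{Z}_{2n}$, after applying the automorphism $\alpha\mapsto\alpha$, $\beta\mapsto\alpha\beta$ in the second case). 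When $R=\emptyset$, $\mathrm{Dic}(n,\emptyset,T)$ is the bipartite graph whose biadjacency matrix is the circulant determined by $T$, and one checks directly that it is distance-regular only in cases already covered by $(i)$--$(ii)$; when $R,T\subseteq1+2\mathbb{Z}_{2n}$, the common-neighbour count from the sufficiency part shows that distance-regularity is equivalent to the identity of $(iii)$ together with $|R\cap T|<n$, and it forces $d=3$.

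The step I expect to be the main obstacle is showing that a distance-regular dicirculant of diameter at least $3$ must be bipartite, and, relatedly, ruling out a primitive strongly regular dicirculant when $d=2$. For the former, if $\Gamma$ is imprimitive but not bipartite then it is antipodal (by Smith's theorem), its antipodal classes are cosets of a normal subgroup $N$ (being blocks of imprimitivity for the regular action of $\mathrm{Dic}_n$), and the antipodal quotient is a Cayley graph on $\mathrm{Dic}_n/N$ — again dicyclic, cyclic, or of small order — of smaller diameter, so the matter reduces to proving that the only antipodal non-bipartite distance-regular dicirculants are complete multipartite; and if $\Gamma$ is primitive of diameter at least $3$ it must be excluded outright. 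Both of these, as well as the identity $\mu=k$ when $d=2$, should follow by combining the multiplicity restrictions on distance-regular graphs with the arithmetic of the sums $\sum_{r\in R}\zeta^{hr}$ and $\sum_{t\in T}\zeta^{ht}$, using that the stabilisers $\{x\in\mathbb{Z}_{2n}:x+R=R\}$ and $\{x\in\mathbb{Z}_{2n}:x+T=T\}$ are subgroups of $\mathbb{Z}_{2n}$ whose orders measure how close $R$ and $T$ are to unions of cosets.
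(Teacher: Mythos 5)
Your sufficiency argument is essentially the paper's: the common-neighbour counts $|R\cap(i+R)|+|T\cap(i+T)|$ and $2|(j+R)\cap T|$ (Lemma \ref{lem::common_neighbor} in the paper), bipartiteness from $R,T\subseteq 1+2\mathbb{Z}_{2n}$, and $|R\cap T|<n$ forcing diameter exactly $3$; your non-antipodality argument ($k\le 2n-2$, so each vertex has at least two antipodes) is a harmless variant of the paper's appeal to the non-existence of a dicirculant isomorphic to $K_{2n,2n}-2nK_2$. The representation-theoretic observation that $\sum_{t\in T}\zeta^{ht}=0$ for odd $h$ (because $T=n+T$) is correct and is a legitimate substitute for the paper's Fourier set-up on $\mathbb{Z}_{2n}$.

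The necessity direction, however, has a genuine gap: the three statements you yourself flag as ``the main obstacle'' are exactly the hard content of the theorem, and ``should follow by combining the multiplicity restrictions with the arithmetic of the sums $\sum_{r\in R}\zeta^{hr}$'' is not an argument. Concretely, you need (a) that a \emph{primitive} distance-regular dicirculant is complete (in particular that there is no primitive strongly regular dicirculant, which is your unproved claim $\mu=k$ when $d=2$); in the paper this does not come from character sums at all, but from the group-theoretic fact that dicyclic groups carry no non-trivial primitive Schur rings (Scott; Muzychuk--Ponomarenko), fed through Miklavi\v{c}--Poto\v{c}nik's distance-module criterion (Lemmas \ref{lem::Schur_DRG}--\ref{lem::Schur_dic}, Corollary \ref{cor::pri_DRG}). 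You need (b) the exclusion of antipodal non-bipartite dicirculants of diameter $3$ and (c) of antipodal bipartite dicirculants of diameter $4$; in the paper these are Lemmas \ref{lem::key1} and \ref{lem::key2}, each a delicate argument (minimal counterexample plus quotienting, the transversal Claim \ref{claim::1}, the eigenvalue dichotomy $\delta\in\mathbb{Q}$ vs.\ $\delta\notin\mathbb{Q}$, and in the bipartite case the $2$-adic counting of Claim \ref{claim::2}), and without them your step ``one first shows $\Gamma$ is bipartite'' for $d\ge 3$ is unsupported. Your reduction of the antipodal case to a quotient Cayley graph is also incomplete as stated: the antipodal class through $1$ need not be normal in $\mathrm{Dic}_n$, so the quotient can be a \emph{circulant} rather than a dicirculant (Lemma \ref{lem::block}(ii), Corollary \ref{cor::DRG_dic}), and one then needs the classification of distance-regular circulants together with the conference-graph cover/halved-graph obstructions (Lemmas \ref{lem::confer}, \ref{lem::cir_DRG}) to finish the induction on imprimitive cases — none of which appears in your outline. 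Once $d=3$, bipartite and non-antipodal is established, your three-subgroup case analysis of the bipartition class does match the paper's Subcases C.1--C.3, so the missing work is precisely items (a)--(c) above.
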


A subset $D$ of a group $G$ is called a \textit{difference set} if there is an integer $\mu$ such that for every $g\in G\setminus\{1\}$  the number of  $(g_1,g_2)\in D\times D$ satisfying  $g_2g_1^{-1}=g$ is equal to $\mu$. If $|D|\notin\{|G|, |G|-1, 1, 0\}$, then $D$ is \textit{non-trivial}. In review of \cite[Lemma 2.8]{MP07}, it is not difficult to verify that the statement in Theorem \ref{thm::main} (iii) is actually equivalent to:
\begin{enumerate}
\item[$(iii')$] the graph $\mathrm{Dic}(n,R,T)$ for even $n$, where $R=-R$ and $T=n+T$ are non-empty subsets of  $1+2\mathbb{Z}_{2n}$ such that  $\alpha^{-1+R}\cup \alpha^{-1+T}\beta$ is a non-trivial difference set in the dicyclic group $\langle\alpha^2,\beta\rangle$ of order $2n$.
\end{enumerate}

By Theorem \ref{thm::main}, we obtain the following corollary immediately.
\begin{corollary}\label{cor::main}
Let $\Gamma$ be a  dicirculant on $4n$ vertices where $n$ is odd. Then $\Gamma$ is distance-regular if and only if it is isomorphic to the complete graph $K_{4n}$, or the complete multipartite graph $K_{t\times m}$, where $tm=4n$.
\end{corollary}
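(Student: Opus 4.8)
The plan is to obtain this corollary as an immediate specialization of Theorem~\ref{thm::main}. First I would dispatch the ``if'' direction, which does not involve the parity of $n$ at all: distance-regularity is invariant under graph isomorphism, and both the complete graph $K_{4n}$ (of diameter $1$) and every complete multipartite graph $K_{t\times m}$ with $t\geq 2$ (of diameter at most $2$) are well known to be distance-regular; hence any dicirculant isomorphic to one of these graphs is distance-regular.

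For the ``only if'' direction I would start from a distance-regular dicirculant $\Gamma$ on $4n$ vertices with $n$ odd and apply Theorem~\ref{thm::main}, which leaves three possibilities for $\Gamma$ up to isomorphism: $K_{4n}$, some $K_{t\times m}$ with $tm=4n$, or a graph $\mathrm{Dic}(n,R,T)$ of the form described in part $(iii)$. The crux is to exclude the third case when $n$ is odd. This is immediate, since part $(iii)$ is stated only for even $n$; alternatively, one can argue directly from the conditions there: part $(iii)$ requires $T$ to be a non-empty subset of $1+2\mathbb{Z}_{2n}$ with $T=n+T$, but for odd $n$ the translate $n+T$ is contained in $2\mathbb{Z}_{2n}$ and so is disjoint from $1+2\mathbb{Z}_{2n}\supseteq T$, forcing $T=\emptyset$, a contradiction. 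Thus $\Gamma\cong K_{4n}$ or $\Gamma\cong K_{t\times m}$, completing the proof.

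I do not expect any genuine obstacle here: the statement is a routine consequence of the main theorem, and the only thing that really needs checking is that the conditions imposed in case $(iii)$ cannot be met when $n$ is odd.
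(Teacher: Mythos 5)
Your proposal is correct and matches the paper, which derives the corollary immediately from Theorem~\ref{thm::main} (case $(iii)$ being excluded simply because it is stated only for even $n$). Your extra parity check that $T=n+T\subseteq (1+2\mathbb{Z}_{2n})\cap 2\mathbb{Z}_{2n}=\emptyset$ when $n$ is odd is a valid, if unnecessary, confirmation of the same point.
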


\section{Preliminaries}\label{section::2}

In this section, we review some notations and results about distance-regular graphs, which are powerful in the proof of Theorem \ref{thm::main}.

Let $\Gamma$ be a graph, and let $\mathcal{B}=\{B_1,\ldots,B_r\}$ be a partition of $V(\Gamma)$. The \textit{quotient graph} of $\Gamma$ with respect to $\mathcal{B}$, denoted by $\Gamma_\mathcal{B}$, is the graph with vertex set $\mathcal{B}$, and with  $B_i,B_j$ ($i\neq j$) adjacent if and only if there exists at least one edge between  $B_i$ and $B_j$ in $\Gamma$. Moreover, we say that $\mathcal{B}$ is an \textit{equitable partition} of $\Gamma$ if there are integers $b_{ij}$ ($1\leq i,j\leq r$) such that every vertex in $B_i$ has exactly $b_{ij}$ neighbors in $B_j$.

Suppose that $\Gamma$ is a distance-regular graph with diameter $d$. For  $i\in\{1,\ldots,d\}$, the \textit{$i$-th distance graph} $\Gamma_i$  is  the graph with vertex set  $V(\Gamma)$ in which two distinct vertices are adjacent if and only if they are at distance $i$ in $\Gamma$. We say that $\Gamma$ is \textit{primitive} if $\Gamma_i$ is connected for all $i\in\{1,\ldots,d\}$, and \textit{imprimitive} otherwise.  Also, we say that  $\Gamma$ is \textit{antipodal} if the relation $\mathcal{R}$ on $V(\Gamma)$ defined by $u\mathcal{R}v\Leftrightarrow\partial(u,v)\in\{0,d\}$ is an equivalence relation. It is  known that an imprimitive distance-regular graph with valency at least $3$ is either bipartite, antipodal, or both \cite[Theorem 4.2.1]{BCN89}.

If $\Gamma$ is a bipartite distance-regular graph, then $\Gamma_2$ has two connected components, which are called the \textit{halved graphs} of $\Gamma$ and denoted by $\Gamma^+$ and $\Gamma^-$. For convenience, we use  $\frac{1}{2}\Gamma$ to represent any one of these two graphs. If $\Gamma$ is an antipodal distance-regular graph, then the  relation $\mathcal{R}$ defined  above leads to a partition $\mathcal{B}^\ast$ of $V(\Gamma)$ into equivalence classes, called \textit{fibres}.  It is known that  $\mathcal{B}^\ast$ is actually  an equitable partition of $\Gamma$, and all fibres of $\Gamma$ share the same size.  The  \textit{antipodal quotient} of $\Gamma$, denoted by $\overline{\Gamma}$, is defined as the quotient graph  $\Gamma_{\mathcal{B}^\ast}$. Let $r$ be the common size of fibres of $\Gamma$. Then  $\Gamma$ is said to be an $r$-\textit{fold antipodal cover} of $\overline{\Gamma}$. Note that if $d=2$ then $\Gamma$ is a complete multipartite graph, and that if $d\geq 3$ then the edges between  two distinct fibres of $\Gamma$ form an empty set or a $1$-factor.

\begin{lemma}(\cite[Proposition 4.2.2]{BCN89}) \label{lem::imprimitive}
Let $\Gamma$ denote an imprimitive distance-regular graph with diameter $d$ and valency $k \geq 3$. Then the following hold.
\begin{enumerate}[$(i)$]\setlength{\itemsep}{0pt}
\item If $\Gamma$ is bipartite, then the halved graphs of $\Gamma$ are non-bipartite distance-regular graphs with diameter $\lfloor\frac{d}{2}\rfloor$.
\item If $\Gamma$  is antipodal, then $\overline{\Gamma}$ is a distance-regular graph with diameter $\lfloor\frac{d}{2}\rfloor$.
\item  If $\Gamma$ is antipodal, then $\overline{\Gamma}$ is not antipodal, except when $d\leq 3$  (in that case $\overline{\Gamma}$  is a complete graph), or when $\Gamma$ is bipartite with $d = 4$ (in that case $\overline{\Gamma}$ is a complete bipartite graph).

\item If $\Gamma$ is antipodal and has odd diameter or is not bipartite, then $\overline{\Gamma}$ is primitive.
\item If $\Gamma$ is bipartite and has odd diameter or is not antipodal, then the halved graphs of $\Gamma$ are primitive.
\item If $\Gamma$ has even diameter and is both bipartite and antipodal, then $\overline{\Gamma}$ is bipartite. Moreover, if $\frac{1}{2}\Gamma$ is a halved graph of $\Gamma$, then it is antipodal, and $\overline{\frac{1}{2}\Gamma}$ is primitive and isomorphic to $\frac{1}{2}\overline{\Gamma}$.
\end{enumerate}
\end{lemma}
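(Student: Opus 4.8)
The plan is to reduce all six statements to two elementary metric identities relating $\Gamma$ to its halved graph and to its antipodal quotient, and then to read off the diameters, intersection numbers and primitivity from the intersection array of $\Gamma$ together with the trichotomy \cite[Theorem 4.2.1]{BCN89} already recorded above. First I would prove the two identities. If $\Gamma$ is bipartite with parts $X_0,X_1$ and $\frac{1}{2}\Gamma$ is the halved graph on $X_0$, then for $u,v\in X_0$ one has $\partial_{\frac{1}{2}\Gamma}(u,v)=\tfrac{1}{2}\partial_\Gamma(u,v)$: a path in $\frac{1}{2}\Gamma$ lifts to a $\Gamma$-walk of twice the length, while a $\Gamma$-geodesic between two vertices of $X_0$, which has even length by bipartiteness, projects to a $\frac{1}{2}\Gamma$-walk of half the length. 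Dually, if $\Gamma$ is antipodal with fibre partition $\mathcal{B}^\ast$ and $\overline{\Gamma}=\Gamma_{\mathcal{B}^\ast}$, then $\partial_{\overline{\Gamma}}(\bar{u},\bar{v})=\min\{\partial_\Gamma(u',v'):u'\in\bar{u},\,v'\in\bar{v}\}=\min(\partial_\Gamma(u,v),\,d-\partial_\Gamma(u,v))$, the last equality using that distinct vertices of one fibre are at distance $d$. Each identity immediately gives diameter $\lfloor\frac{d}{2}\rfloor$ for $\frac{1}{2}\Gamma$ and $\overline{\Gamma}$, proving the diameter assertions in (i) and (ii).

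Next I would settle distance-regularity. For $u,v\in X_0$ at halved-distance $i$, i.e. $\partial_\Gamma(u,v)=2i$, the numbers $c_i,a_i,b_i$ of $\frac{1}{2}\Gamma$ are obtained by counting, for fixed $v$, the vertices of $N_2^\Gamma(v)$ lying at $\Gamma$-distance $2i-2$, $2i$ and $2i+2$ from $u$; by the distance-regularity of $\Gamma$ these are fixed expressions in the $c_j,a_j,b_j$ of $\Gamma$ and depend only on $i$, so $\frac{1}{2}\Gamma$ is distance-regular. The same counting against the equitable partition $\mathcal{B}^\ast$ (whose quotient intersection numbers are well-defined precisely because $\mathcal{B}^\ast$ is equitable) yields the distance-regularity of $\overline{\Gamma}$ in (ii). For the non-bipartiteness in (i): if $u,v\in X_0$ are $\frac{1}{2}\Gamma$-adjacent and $w$ is a common $\Gamma$-neighbour, then since $k\geq3$ the vertex $w$ has a further neighbour $z\in X_0$, which satisfies $\partial_\Gamma(z,u)=\partial_\Gamma(z,v)=2$ and is thus a common $\frac{1}{2}\Gamma$-neighbour of $u$ and $v$; hence $\frac{1}{2}\Gamma$ contains a triangle and is non-bipartite.

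Parts (iii)--(vi) carry the real weight, and I expect (iii) to be the main obstacle. For the primitivity claims (iv) and (v) the strategy is to apply the trichotomy \cite[Theorem 4.2.1]{BCN89}: as $\frac{1}{2}\Gamma$ and $\overline{\Gamma}$ are distance-regular of valency at least $3$, imprimitivity would force one of them to be bipartite or antipodal, so it suffices to exclude both alternatives. The bipartite alternative for a halved graph is already ruled out by the non-bipartiteness in (i), and the antipodal alternative is controlled by (iii); the residual alternatives are exactly those excluded by the hypotheses ``odd diameter or not antipodal/bipartite'', through the parity that the two metric identities impose on $\lfloor\frac{d}{2}\rfloor$. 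Everything therefore hinges on (iii), the determination of when $\overline{\Gamma}$ is again antipodal. Here I would translate antipodality of $\overline{\Gamma}$ into the statement that its distance-$\lfloor\frac{d}{2}\rfloor$ graph is a disjoint union of cliques, pull this condition back through $\mathcal{B}^\ast$ to a constraint on the distance classes of $\Gamma$ near $d$, and show that for $d\geq4$ (with $\Gamma$ not bipartite) this constraint is unsatisfiable, leaving only the exceptional cases $d\leq3$, where $\overline{\Gamma}$ is complete, and bipartite $d=4$, where $\overline{\Gamma}$ is complete bipartite. Keeping track of these pull-backs and eliminating every intermediate diameter is the delicate part of the argument.

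Finally, for (vi), with $d$ even and $\Gamma$ both bipartite and antipodal, I would first note that two vertices of a common fibre are at distance $d$, which is even, hence lie in the same part of the bipartition; thus $\mathcal{B}^\ast$ refines the bipartition, the bipartition descends to the quotient, and $\overline{\Gamma}$ is bipartite. I would then show that halving and folding commute: the traces of the fibres on a part $X_0$ form an antipodal partition of $\frac{1}{2}\Gamma$, and comparing the two ways around the resulting square yields a canonical isomorphism $\overline{\frac{1}{2}\Gamma}\cong\frac{1}{2}\overline{\Gamma}$, with antipodality of $\frac{1}{2}\Gamma$ read off from the distance identity. Primitivity of this common graph then follows by applying (iv) and (v) one level down.
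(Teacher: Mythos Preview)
The paper does not prove this lemma at all: it is quoted verbatim as \cite[Proposition 4.2.2]{BCN89} and used as a black box, so there is no ``paper's own proof'' to compare against. Your proposal is therefore not a reconstruction of anything in the present paper but a sketch of the classical argument from Brouwer--Cohen--Neumaier.

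That said, your outline is broadly the standard one and is sound in structure: the two metric identities, the derivation of distance-regularity of $\frac{1}{2}\Gamma$ and $\overline{\Gamma}$ from the intersection numbers of $\Gamma$, the triangle argument for non-bipartiteness of the halved graph, and the reduction of (iv)--(vi) to (iii) via the imprimitivity trichotomy are all correct. The one place where you explicitly leave a gap is (iii), which you flag yourself; the actual proof in \cite{BCN89} handles it by showing that if $\overline{\Gamma}$ were antipodal with $d\geq 4$, the lifted partition of $V(\Gamma)$ would force $b_{\lfloor d/2\rfloor}=c_{\lceil d/2\rceil}$ together with constraints that collapse to the bipartite $d=4$ case, so your plan of pulling the antipodal condition back through $\mathcal{B}^\ast$ is the right idea but would need these array computations made explicit. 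One minor caution in your triangle argument for (i): you should also note that the halved graph has valency at least $3$ (needed later to invoke the trichotomy for $\frac{1}{2}\Gamma$), which follows since $k_2\geq k-1\geq 2$ is not quite enough on its own and requires $k\geq 3$ together with $d\geq 2$.
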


\begin{lemma}(\cite[Theorem 6.2]{GH92})\label{lem::quotient_graph}
Let $\Gamma$ be an antipodal distance-regular graph with diameter $d\geq 3$, and let $\mathcal{B}$ be an equitable partition of $\Gamma$ with each block contained in a
fibre of $\Gamma$. Assume that no block of $\mathcal{B}$ is a single vertex, or a fibre. Then all
blocks  of  $\mathcal{B}$ have the same size, and the  quotient graph $\Gamma_{\mathcal{B}}$ is an
antipodal distance-regular graph with diameter $d$. Moreover, $\Gamma$ and $\Gamma_{\mathcal{B}}$ have isomorphic antipodal quotients.
\end{lemma}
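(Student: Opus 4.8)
The plan is to realize $\Gamma_{\mathcal{B}}$ as an \emph{intermediate antipodal cover} sitting between $\Gamma$ and its antipodal quotient $\overline{\Gamma}$, via a pair of graph covering maps $\Gamma \xrightarrow{p} \Gamma_{\mathcal{B}} \xrightarrow{q} \overline{\Gamma}$ whose composite is the canonical projection onto the fibres. Write $\Delta := \Gamma_{\mathcal{B}}$. The starting point is the structural fact recalled above: since $d \ge 3$, the edges between any two distinct fibres of $\Gamma$ form either the empty set or a $1$-factor. Fix fibres $F, F'$ adjacent in $\overline{\Gamma}$, so the edges between them form a perfect matching $M\colon F \to F'$. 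Each block lies in a fibre by hypothesis, so $\mathcal{B}$ restricts to partitions of $F$ and of $F'$. Because $\mathcal{B}$ is equitable and every vertex of $F$ has exactly one neighbour in $F'$ (its $M$-partner), for each block $B \subseteq F$ the intersection numbers $b_{B,B'}$ ($B' \subseteq F'$) are $0/1$ and sum to $1$; hence all vertices of $B$ are matched by $M$ into a single block of $F'$, and symmetrically. Since $M$ is a bijection, it carries each block of $F$ \emph{onto} a block of $F'$, inducing a size-preserving bijection between the blocks in $F$ and those in $F'$.

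Using these block-matchings I define $p\colon \Gamma\to\Delta$ by $v\mapsto$ (the block containing $v$) and $q\colon \Delta\to\overline{\Gamma}$ by $B\mapsto$ (the fibre containing $B$). A fibre carries no internal edges (its vertices are pairwise at distance $d\ge3$), so every vertex $v$ has exactly one neighbour in each fibre adjacent to $\pi(v)$ and none elsewhere; these neighbours lie in the pairwise distinct matched images of $v$'s block. Consequently $p$ restricts to a bijection from $N(v)$ onto the neighbourhood of $p(v)$, i.e. $p$ is a covering map, and the same count shows $q$ is a covering map; in particular $\Gamma$, $\Delta$ and $\overline{\Gamma}$ all have the same valency. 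Now $\Gamma$ is connected (being distance-regular) and $\Delta$ is connected (a quotient of a connected graph), so the covering $p$ of connected graphs has all fibres of one common cardinality; but the $p$-fibre over a block $B$ is exactly $B$, so \emph{all blocks of $\mathcal{B}$ have the same size} $m$. (If $m=1$ then $\Delta=\Gamma$, and if a block is a whole fibre then $\Delta=\overline{\Gamma}$ has diameter $\lfloor d/2\rfloor$; the two hypotheses exactly exclude these degenerate cases.)

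Next I pin down the metric of $\Delta$. Projecting a $\Gamma$-geodesic and lifting a $\Delta$-geodesic (unique path-lifting for the covering $p$) yield, for every $v\in B$,
\[
\partial_{\Delta}(B,B')=\min_{w\in B'}\partial_{\Gamma}(v,w).
\]
Two distinct blocks inside a common fibre consist entirely of mutually antipodal vertices, so this minimum is $d$; while for $B\subseteq F$ and $B'\subseteq F'$ in distinct fibres, no $w\in B'$ is antipodal to $v$ (antipodality means lying in a common fibre), whence $\partial_{\Gamma}(v,w)\le d-1$ and $\partial_{\Delta}(B,B')\le d-1$. Since no block is a full fibre, every fibre contains at least two blocks, distance-$d$ pairs exist, and $\Delta$ has diameter exactly $d$; moreover its distance-$d$ relation is precisely ``lying in a common fibre'', an equivalence relation whose classes all have size $r/m$ (with $r=|F|$). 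Contracting these classes recovers $\overline{\Gamma}$, since blocks share a $q$-image iff they are antipodal-or-equal and $q$ preserves adjacency between classes; thus $\Delta$ is antipodal with $\overline{\Delta}\cong\overline{\Gamma}$, which is the final ``isomorphic antipodal quotients'' assertion.

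It remains to prove that $\Delta$ is distance-regular, and this I expect to be the main obstacle. By the displayed distance formula, computing the intersection numbers of $\Delta$ reduces, after fixing $v\in B$, to counting the neighbour-blocks $B''$ of a block $B'$ with $\partial_{\Delta}(B,B')=j$ that satisfy $\partial_{\Delta}(B,B'')=j-1,\,j,\,j+1$. Via the block-matchings these neighbour-blocks correspond bijectively to the fibres $F''$ adjacent to $F'$ in $\overline{\Gamma}$, and $\partial_{\Delta}(B,B'')$ is governed by two data: the $\overline{\Gamma}$-distance from $\pi(v)$ to $F''$, and whether $B''$ is ``near'' $v$ (contains a vertex at the smaller of the two admissible $\Gamma$-distances, which lie in $\{i,d-i\}$ for $i=\partial_{\overline{\Gamma}}(\pi(v),F')$) or ``far''. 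The first datum is constant by the distance-regularity of $\overline{\Gamma}$. The genuinely delicate point is the uniformity of the near/far distribution: one must show that, for fixed $v\in B$, the near/far status of each neighbour-block is determined through the path-lifting property (which makes the minimum attained at every $v$) by the distances in $\Gamma$ from the lifted endpoints, and that the resulting counts depend only on $j$ and on the intersection numbers of $\Gamma$. As a consistency check, the quotient matrix of the equitable partition $\mathcal{B}$ has all its eigenvalues among the $d+1$ eigenvalues of $\Gamma$, so $\Delta$ has at most $d+1$ distinct eigenvalues, while its diameter $d$ forces at least $d+1$; hence exactly $d+1$, as required. Converting this eigenvalue count and the near/far bookkeeping into the constancy of all $c_j,a_j,b_j$ is the crux, and it is here that the distance-regularity of $\Gamma$ itself, not merely that of $\overline{\Gamma}$, is essential.
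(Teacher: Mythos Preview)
The paper does not supply a proof of this lemma; it is quoted verbatim from Godsil and Hensel \cite[Theorem~6.2]{GH92} and used as a black box. So there is no in-paper argument to compare against, and your attempt must be judged on its own.

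Your structural analysis is sound. The observation that the inter-fibre $1$-factors carry blocks to blocks (because the equitable-partition constants across a matching must be $0/1$ summing to $1$) is exactly the right lever; from it the covering maps $p$ and $q$, the common block size, the distance formula $\partial_\Delta(B,B')=\min_{w\in B'}\partial_\Gamma(v,w)$, the diameter computation, the antipodality of $\Delta$, and the identification $\overline{\Delta}\cong\overline{\Gamma}$ all follow cleanly. That part would stand as written.

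The gap is precisely where you flag it: you have not proved that $\Delta$ is distance-regular. You outline a near/far bookkeeping scheme and note the eigenvalue count as a ``consistency check'', but you stop short of carrying it out, and neither ingredient as stated closes the argument. In particular, a connected regular graph of diameter $d$ with exactly $d+1$ distinct eigenvalues need not be distance-regular, so the spectral observation by itself is insufficient. What is actually needed is to show that for fixed $v\in B$ and a fibre $F'$ at $\overline{\Gamma}$-distance $i$ from $\pi(v)$, the number of vertices of each block $B'\subseteq F'$ lying in $N_i^{\Gamma}(v)$ (versus $N_{d-i}^{\Gamma}(v)$) depends only on whether $B'$ is ``near'' or ``far'', and then to count, using the intersection numbers of $\Gamma$, how many neighbour-blocks of a given $B'$ fall into each category. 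This is the substance of the Godsil--Hensel proof, and until it (or an equivalent argument) is written out, the distance-regularity of $\Delta$ remains an assertion rather than a conclusion. As it stands, your write-up is a correct proof of three of the four claims in the lemma, together with an honest but unfinished sketch for the fourth.
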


\begin{lemma}(\cite[p.425, p.431]{BCN89})\label{lem::antipodal_DRG}
Let $\Gamma$ be an $r$-fold antipodal distance-regular graph on $n$ vertices with  diameter $d$ and valency $k$.
\begin{enumerate}[$(i)$]\setlength{\itemsep}{0pt}
\item If $\Gamma$ is non-bipartite and $d=3$, then $n=r(k+1)$, $k=\mu(r-1)+\lambda+1$, and $\Gamma$ has  the intersection array $\{k,\mu(r-1),1;1,\mu,k\}$ and the spectrum $\{k^1,\theta_1^{m_1},\theta_2^k,\theta_3^{m_3}\}$, where
\begin{equation*}
\theta_1=\frac{\lambda-\mu}{2}+\delta,~~\theta_2=-1,~~\theta_3=\frac{\lambda-\mu}{2}-\delta,~~\delta=\sqrt{k+\left(\frac{\lambda-\mu}{2}\right)^2},
\end{equation*}
and
\begin{equation*}
m_1=-\frac{\theta_3}{\theta_1-\theta_3}(r-1)(k+1),~~m_3=\frac{\theta_1}{\theta_1-\theta_3}(r-1)(k+1).
\end{equation*}

\item If $\Gamma$ is bipartite and $d=4$, then $n=2r^2\mu$, $k=r\mu$, and $\Gamma$ has the intersection array $\{r\mu, r\mu-1,(r-1)\mu, 1;1,\mu,r\mu-1,r\mu\}$.
\end{enumerate}
\end{lemma}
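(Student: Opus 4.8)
The plan is to recover all the invariants from the antipodal quotient $\overline{\Gamma}$, and in part~(ii) also from a halved graph $\tfrac{1}{2}\Gamma$: Lemma~\ref{lem::imprimitive} controls the structure of these graphs, elementary counting of the spheres $N_i(u)$ determines the intersection numbers, and the distance-$i$ graphs locate the eigenvalues.

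For part~(i): since $\Gamma$ is antipodal of diameter $3$, Lemma~\ref{lem::imprimitive}(ii) gives that $\overline{\Gamma}$ is distance-regular of diameter $\lfloor 3/2 \rfloor = 1$, i.e.\ $\overline{\Gamma} \cong K_{n/r}$. As $d = 3 \ge 3$, no edge of $\Gamma$ lies inside a fibre and the edges between two distinct fibres form a $1$-factor, so the $k$ neighbours of a vertex occupy $k$ distinct fibres; hence $\overline{\Gamma}$ has valency $k$, and comparison with $n/r - 1$ yields $n = r(k+1)$. If $\partial(u,v) = 3$ then $v$ lies in the fibre of $u$, and every neighbour $w$ of $v$ has $\partial(u,w) \ge 2$ and is not in that fibre (distinct fibre-members are at distance $3$), so $\partial(u,w) = 2$; thus $c_3 = k$ and $b_3 = 0$. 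The sphere-size identity $1 + k + k b_1/c_2 + (r-1) = n$ then forces $b_1 = \mu(r-1)$, whence $\lambda = a_1 = k - 1 - \mu(r-1)$, i.e.\ $k = \mu(r-1) + \lambda + 1$, and a similar count gives $b_2 = 1$; this is the intersection array $\{k, \mu(r-1), 1; 1, \mu, k\}$. For the spectrum, $\theta_0 = k$ has multiplicity $1$; since $\overline{\Gamma}$ is complete we have $A(\Gamma) \mathbf{1}_F = \mathbf{1} - \mathbf{1}_F$ for each fibre $F$, so the fibre-indicators span an $A(\Gamma)$-invariant subspace of dimension $n/r = k+1$ on which $A(\Gamma)$ has eigenvalues $k$ (once) and $-1$ (with multiplicity $k$). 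This subspace is exactly the $(r-1)$-eigenspace of $A(\Gamma_3)$ (the distance-$3$ graph being a disjoint union of cliques $K_r$), and since $A(\Gamma_3) = p_3(A(\Gamma))$ for a polynomial $p_3$, every $(-1)$-eigenvector of $\Gamma$ must lie in it; therefore $\theta_2 = -1$ has multiplicity $m_2 = k$. The remaining eigenvalues $\theta_1, \theta_3$ satisfy $\theta_1 + \theta_3 = \lambda - \mu$ and $\theta_1 \theta_3 = -k$ (read off from the trace and determinant of the intersection matrix, using $\theta_2 = -1$), hence $\theta_1, \theta_3 = \tfrac{\lambda - \mu}{2} \pm \delta$ with $\delta = \sqrt{k + (\tfrac{\lambda - \mu}{2})^2}$; and $m_1 + m_3 = n - 1 - k = (r-1)(k+1)$ together with $\sum_i m_i \theta_i = \operatorname{tr} A(\Gamma) = 0$, i.e.\ $m_1 \theta_1 + m_3 \theta_3 = 0$, yields the stated $m_1, m_3$.

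For part~(ii): now $a_i = 0$ for all $i$; exactly as above $c_4 = k$, no edge lies inside a fibre, and the inter-fibre edges form $1$-factors, so $\overline{\Gamma}$ has valency $k$. By Lemma~\ref{lem::imprimitive}(ii),(vi), $\overline{\Gamma}$ is a bipartite distance-regular graph of diameter $2$, hence $\overline{\Gamma} \cong K_{k,k}$ and $n = r \cdot 2k = 2rk$. By Lemma~\ref{lem::imprimitive}(i),(vi), a halved graph $\tfrac{1}{2}\Gamma$ is a non-bipartite distance-regular graph of diameter $2$ on $n/2$ vertices which is antipodal with primitive quotient $\overline{\tfrac{1}{2}\Gamma} \cong \tfrac{1}{2}\overline{\Gamma} \cong \tfrac{1}{2} K_{k,k} \cong K_k$; an antipodal strongly regular graph is complete multipartite, and having antipodal quotient $K_k$ forces $\tfrac{1}{2}\Gamma \cong K_{k \times r}$ with part size $(n/2)/k = r$, whose valency is $r(k-1)$. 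On the other hand the valency of $\tfrac{1}{2}\Gamma$ equals $|N_2^{\Gamma}(u)| = k b_1/c_2 = k(k-1)/\mu$; equating gives $k = r\mu$, and then $n = 2rk = 2r^2\mu$. The intersection array is now forced by $a_i = 0$, $b_0 = k = r\mu$, $c_2 = \mu$ (so $b_1 = r\mu - 1$ and $b_2 = (r-1)\mu$), $c_4 = k$, and the count $1 + k + k b_1/c_2 + (k b_1/c_2) b_2/c_3 + (r-1) = n$, which solves to $c_3 = r\mu - 1$ and hence $b_3 = 1$: the array is $\{r\mu, r\mu - 1, (r-1)\mu, 1; 1, \mu, r\mu - 1, r\mu\}$.

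The combinatorial skeleton (the $1$-factor structure between fibres, the clique structure of $\Gamma_3$, the identification of $\tfrac{1}{2}\Gamma$ and of $\overline{\Gamma}$) together with the sphere counts is routine once set up; the delicate point, and the main obstacle, is the spectral part of~(i): establishing that $-1$ is an eigenvalue of multiplicity \emph{exactly} $k$ --- via the $A(\Gamma)$-invariant span of fibre-indicators and its identification with an eigenspace of $\Gamma_3$ --- and then extracting the precise closed forms for $\theta_1, \theta_3$ and for $m_1, m_3$ from the intersection matrix and the trace identity.
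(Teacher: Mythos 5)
Your proposal is correct, and there is nothing in the paper to compare it against line by line: Lemma~\ref{lem::antipodal_DRG} is quoted from Brouwer--Cohen--Neumaier without proof, and your derivation is a sound self-contained reconstruction along the standard route (structure of $\overline{\Gamma}$ and $\frac{1}{2}\Gamma$ via Lemma~\ref{lem::imprimitive}, the $1$-factor structure between fibres, sphere-size counts for the intersection array, and the intersection matrix plus trace identities for the spectrum). All the computations check out: quotient valency $k$ and $c_d=k$ in both cases, $b_1=\mu(r-1)$ and $b_2=1$ in (i), $k=r\mu$, $c_3=r\mu-1$, $b_3=1$ in (ii), $\theta_1+\theta_3=\lambda-\mu$, $\theta_1\theta_3=-k$ (the determinant of the intersection matrix is indeed $k^2$), and the multiplicities from $m_1+m_3=(r-1)(k+1)$, $m_1\theta_1+m_3\theta_3=0$.

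Two small points you should make explicit. First, in the spectral step of (i), the assertion that every $(-1)$-eigenvector of $A(\Gamma)$ lies in the span $W$ of the fibre indicators tacitly uses $p_3(-1)=r-1$, where $A(\Gamma_3)=p_3(A(\Gamma))$. This does follow from what you have: $A(\Gamma)$ restricted to $W$ is $J-I$ in the fibre-indicator basis, so $W$ contains a $(-1)$-eigenvector of $A(\Gamma)$, while every vector of $W$ is an $(r-1)$-eigenvector of $A(\Gamma_3)$; hence $p_3(-1)=r-1$, and then the full $(-1)$-eigenspace lies in $W$, giving multiplicity exactly $k$. Second, Lemma~\ref{lem::imprimitive} as stated assumes valency $k\ge 3$, so your argument formally omits $k=2$: in case (i) no such graph exists (the only candidates are cycles, and no cycle is simultaneously non-bipartite, antipodal and of diameter $3$), and in case (ii) the only instance is $C_8$, for which the stated parameters ($r=2$, $\mu=1$) are verified directly. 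With these remarks added, the proof is complete.
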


A \textit{conference graph} is a strongly regular graph with parameters $(n,k=\frac{n-1}{2},\lambda=\frac{n-5}{4},\mu=\frac{n-1}{4})$, where $n\equiv 1\pmod 4$. Paley graphs, introduced  by Sachs \cite{S62},  and independently by Erdős and Rényi \cite{ER63}, form an infinite family of conference graphs. Let $\mathbb{F}_q$ denote the finite field of order $q$  where $q\equiv 1\pmod 4$ is a prime power. The \textit{Paley graph} $P(q)$ is defined as the graph with vertex set $\mathbb{F}_q$ in which two distinct vertices $u,v\in \mathbb{F}_q$ are adjacent if and only if $u-v$ is a square in the multiplicative group of $\mathbb{F}_q$.

\begin{lemma}(\cite[p. 180]{BCN89})\label{lem::confer}
Let $\Gamma$ be a conference graph (or particularly, Paley graph). Then:
\begin{enumerate}[$(i)$]\setlength{\itemsep}{0pt}
\item $\Gamma$ has no distance-regular $r$-fold antipodal covers for $r>1$, except for the pentagon $C_5\cong P(5)$, which is covered by the decagon $C_{10}$;
\item $\Gamma$ cannot be a halved graph of a bipartite distance-regular graph.
\end{enumerate}
\end{lemma}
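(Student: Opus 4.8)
The plan is to argue through the spectrum of the putative cover (respectively bipartite parent) together with the rigidity of the conference parameters. Throughout write $N$ for the number of vertices of the conference graph $\Gamma$, so that $\Gamma$ has valency $K=\frac{N-1}{2}$ and three eigenvalues
\[
K,\quad \rho=\tfrac{-1+\sqrt N}{2},\quad \sigma=\tfrac{-1-\sqrt N}{2},
\]
with multiplicities $1,\frac{N-1}{2},\frac{N-1}{2}$; equivalently $\rho+\sigma=\lambda-\mu=-1$ and $\rho\sigma=\mu-K=-\tfrac{N-1}{4}$. Two regimes occur: $N$ is a perfect square (integral spectrum), or $N$ is a non-square (so $\rho,\sigma$ are irrational algebraic conjugates). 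Since $C_5\cong P(5)$ is genuinely covered by $C_{10}$ and equals $\tfrac12 C_{10}$, both statements must be read as excluding $C_5$; I would prove them for $N>5$ and record $C_5$ as the exceptional case.

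For (ii), suppose $\Gamma=\tfrac12\tilde\Gamma$ for a bipartite distance-regular $\tilde\Gamma$ of valency $k$; by Lemma \ref{lem::imprimitive}(i) its diameter $D$ satisfies $\lfloor D/2\rfloor=2$, so $D\in\{4,5\}$. Writing the bipartite adjacency matrix as $\left(\begin{smallmatrix}0&B\\B^{\top}&0\end{smallmatrix}\right)$, one checks $BB^{\top}=kI+c_2\,A(\Gamma)$, so the spectrum of $\Gamma$ is exactly $\{(\theta^2-k)/c_2:\theta\ge 0,\ \theta\in\mathrm{spec}(\tilde\Gamma)\}$ with matching multiplicities, and $K=\frac{k(k-1)}{c_2}=\frac{N-1}{2}$. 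If $D=4$, then $\theta=0$ is an eigenvalue of $\tilde\Gamma$ folding to the least eigenvalue $\sigma=-k/c_2\in\mathbb{Q}$; hence $N$ is a perfect square, and a feasibility check on the diameter-$4$ bipartite intersection array (now with prescribed integral eigenvalues and multiplicities $\frac{N-1}{2}$) eliminates this branch for $N>5$. If $D=5$, then the two non-principal eigenvalues $\theta_1>\theta_2>0$ satisfy $\rho=(\theta_1^2-k)/c_2$ and $\sigma=(\theta_2^2-k)/c_2$; feeding these into $\rho+\sigma=-1$, $\rho\sigma=-\frac{N-1}{4}$ and $\frac{k(k-1)}{c_2}=\frac{N-1}{2}$, together with the standard recurrences for a bipartite diameter-$5$ array, should force $k=2$, whence $\tilde\Gamma=C_{2N}$, $\Gamma=C_N$, and the conference condition forces $N=5$.

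For (i), let $\tilde\Gamma$ be a distance-regular antipodal $r$-cover of $\Gamma$ with $r>1$; by Lemma \ref{lem::imprimitive}(ii) its diameter $d$ satisfies $\lfloor d/2\rfloor=2$, so $d\in\{4,5\}$, and Lemma \ref{lem::imprimitive}(vi) excludes the bipartite $d=4$ case (its quotient would be bipartite, contrary to the non-bipartite $\Gamma$), leaving non-bipartite $d=4$ or $d=5$. Since the fibres form an equitable partition, the three eigenvalues $K,\rho,\sigma$ (with their multiplicities) are exactly the $\tilde\Gamma$-eigenvalues with fibre-constant eigenvectors, while the remaining cover eigenvalues (two if $d=4$, three if $d=5$) carry total multiplicity $(r-1)N$ and are determined up to algebraic conjugacy by the intersection array. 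I would impose that these cover multiplicities are positive integers and that $\tilde\Gamma$ meets the Krein inequalities; in the $d=5$ bipartite subcase one has in addition the halved-graph folding relation of the previous paragraph, and combining it with the quotient embedding pins the valency down to $2$, i.e.\ $\tilde\Gamma=C_{10}$. For $N>5$ the remaining non-bipartite branches admit no feasible array.

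The main obstacle is the feasibility bookkeeping in the two rigid-spectrum branches: the perfect-square conference graphs, where the spectrum is integral and rationality yields nothing, and the valency-reduction step $k=2$ in the diameter-$5$ case. Both require the full set of intersection-array recurrences and the Krein inequalities for the larger graph $\tilde\Gamma$, rather than the imprimitivity lemmas alone; conceptually, the point is that the conference identity $\lambda-\mu=-1$ (equivalently, equal eigenvalue multiplicities) is too rigid to lift to a diameter-$4$ or $5$ array except for the pentagon. This rigidity, phrased through the two-graph description of conference graphs, is what underlies the classical treatment in \cite[p.~180]{BCN89}.
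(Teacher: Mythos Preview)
The paper does not give its own proof of this lemma; it is cited from \cite[p.~180]{BCN89} as a known result, so there is no in-paper argument to compare against.

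Your outline is a plausible reconstruction: the spectral folding relation $BB^\top=kI+c_2A(\Gamma)$ and the diameter bounds from Lemma~\ref{lem::imprimitive} are the right starting points. However, your proposal is explicitly a sketch rather than a proof: in every branch the decisive step is deferred to an unperformed ``feasibility check'' or to relations that ``should force $k=2$''. These checks are precisely where the content lies, and you yourself flag them as the main obstacle. In particular, for perfect-square $N$ (integral spectrum) your rationality arguments yield nothing, and you have not indicated which constraint---multiplicity integrality, Krein inequalities, the absolute bound---actually eliminates those branches; simply asserting that ``no feasible array'' exists for $N>5$ is not an argument. Your side observation that $C_5=\tfrac12 C_{10}$ makes the pentagon an exception to part~(ii) as literally stated is correct, though it does not affect the way the paper applies the lemma (the pentagon is handled there as a cycle, not as a Paley graph).
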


Recall that circulants are Cayley graphs on cyclic groups. In \cite{MP03}, Miklavi\v{c} and Poto\v{c}nik determined all (primitive) distance-regular circulants.

\begin{lemma}(\cite[Theorem 1.2, Corollary 3.7]{MP03}) \label{lem::cir_DRG}
Let $\Gamma$ be a circulant on $n$ vertices. Then $\Gamma$ is distance-regular if and only if it is isomorphic to one of the following graphs:
\begin{enumerate}[$(i)$]\setlength{\itemsep}{0pt}
\item the cycle $C_n$;
\item the complete graph $K_n$;
\item the complete multipartite graph $K_{t\times m}$, where $tm=n$;
\item the complete bipartite graph without a $1$-factor $K_{m,m}-mK_2$, where $2m = n$, $m$ odd;
\item the Paley graph $P(n)$, where $n\equiv 1\pmod 4$  is prime.
\end{enumerate}
In particular, $\Gamma$ is a primitive distance-regular graph if and only if $\Gamma\cong K_n$, or $n$ is prime, and $\Gamma\cong C_n$ or $P(n)$.
\end{lemma}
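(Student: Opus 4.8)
The plan is to argue by induction on $n$, exploiting the fact that for a circulant $\Gamma=\mathrm{Cay}(\mathbb{Z}_n,S)$ (with $S=-S\subseteq\mathbb{Z}_n\setminus\{0\}$ and $\langle S\rangle=\mathbb{Z}_n$ by connectedness) the eigenvalues are the character sums $\lambda_j=\sum_{s\in S}\zeta^{js}$ with $\zeta=e^{2\pi i/n}$, and every distance graph $\Gamma_i$ is again a circulant. First I would dispose of small valency: if $k=b_0\le 2$ then connectedness forces $S=\{a,-a\}$ with $\langle a\rangle=\mathbb{Z}_n$, so $\Gamma\cong C_n$, which is case $(i)$ (the degenerate $K_2$ being absorbed into $(ii)$). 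Hence I may assume $k\ge 3$ and invoke the trichotomy for imprimitive distance-regular graphs of valency at least $3$: by \cite[Theorem 4.2.1]{BCN89}, if $\Gamma$ is imprimitive it is bipartite, antipodal, or both. Primitivity of $\Gamma$ is equivalent to $\langle S_i\rangle=\mathbb{Z}_n$ for each distance graph $\Gamma_i$.

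The decisive structural point in the imprimitive case is that both the antipodal fibres and the bipartition classes are cosets of a subgroup of $\mathbb{Z}_n$. Indeed, each of these partitions is invariant under the regular action of $\mathbb{Z}_n$ by translation (which consists of automorphisms of $\Gamma$), so the class $H$ containing $0$ satisfies $H-h=H$ for every $h\in H$ and is therefore a subgroup, with the remaining classes its cosets. Consequently the antipodal quotient $\overline{\Gamma}=\Gamma_{\mathcal{B}^\ast}$ is a Cayley graph on the cyclic group $\mathbb{Z}_n/H$, and each halved graph $\tfrac12\Gamma$ is a Cayley graph on the index-$2$ subgroup of $\mathbb{Z}_n$; both are again circulants, and by Lemma~\ref{lem::imprimitive} they are distance-regular of strictly smaller diameter. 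The induction hypothesis then identifies $\overline{\Gamma}$ or $\tfrac12\Gamma$ with one of the graphs in $(i)$--$(v)$, and I would reassemble $\Gamma$ using Lemmas~\ref{lem::quotient_graph}--\ref{lem::confer}. Here Lemma~\ref{lem::confer} is what makes the induction close: a Paley graph can be neither a halved graph nor (except $C_5$) an antipodal quotient carrying a larger cover, so the bipartite branch forces $\tfrac12\Gamma\in\{K_{n/2},C_{n/2}\}$, giving $\Gamma\cong K_{m,m}-mK_2$ (with $m$ odd, the parity being exactly the condition for the crown graph to be a circulant) or $\Gamma\cong C_n$, while the antipodal branch forces the complete multipartite family $(iii)$; the diameter and parameter computations of Lemma~\ref{lem::antipodal_DRG} rule out the stray non-bipartite antipodal covers of diameter $3$.

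For the primitive case with $k\ge 3$ I would split on the diameter. When $d=2$, $\Gamma$ is a strongly regular circulant; the classification of Bridges--Mena and Ma shows that a strongly regular circulant is a complete multipartite graph, its (disconnected) complement, or a conference graph, and that a conference circulant must be a Paley graph $P(n)$ with $n$ prime and $n\equiv 1\pmod 4$ --- the only primitive possibility here. The hard part will be the case $d\ge 3$: I must show that no primitive distance-regular circulant of valency $k\ge 3$ and diameter at least $3$ exists, so that the surviving primitive examples are exactly $K_n$, the prime cycles $C_p$, and the prime Paley graphs. The mechanism I would use is the cyclotomic nature of the spectrum together with a multiplicity bound: the Galois group $\mathbb{Z}_n^\ast$ acts on the eigenvalues by $\sigma_t(\lambda_j)=\lambda_{tj}$, so each nontrivial eigenvalue shares its multiplicity with its whole Galois orbit, forcing the multiplicities to grow with $n$, whereas Godsil's absolute bound for primitive distance-regular graphs caps a nontrivial multiplicity in terms of the valency; reconciling these, together with the requirement $\langle S_i\rangle=\mathbb{Z}_n$ for every distance graph, should eliminate all composite $n$ and all valencies $k\ge 3$.

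I expect the genuine obstacles to be two. The first is the primitive diameter-$(\ge 3)$ elimination just described: turning the heuristic ``Galois orbits force large multiplicities, the absolute bound forbids them'' into a rigorous contradiction may require a direct analysis of the eigenspace partition $\{j:\lambda_j=\theta_i\}$ of $\mathbb{Z}_n$ as a Schur ring, rather than a one-line multiplicity count. The second is the bookkeeping in the imprimitive reassembly --- verifying that the lifted parameters supplied by Lemma~\ref{lem::antipodal_DRG} are actually realised by a circulant, and extracting the parity condition that $m$ is odd in case $(iv)$ --- which is routine but must be carried out carefully to avoid admitting spurious covers. Once both are settled, collecting the primitive list ($K_n$, $C_p$, $P(p)$) together with the imprimitive list ($C_n$, $K_{t\times m}$, $K_{m,m}-mK_2$) yields precisely $(i)$--$(v)$ and the stated characterisation of primitivity.
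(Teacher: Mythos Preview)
The paper does not prove this lemma at all: it is quoted verbatim from \cite[Theorem 1.2, Corollary 3.7]{MP03} and used as a black box, so there is no ``paper's own proof'' to compare your proposal against. Your sketch is an attempt to reprove the Miklavi\v{c}--Poto\v{c}nik theorem from scratch, which is not required here.

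That said, a brief comment on the approach: judging from the companion result the paper also imports (Lemma~\ref{lem::Schur_DRG}), the original argument in \cite{MP03} is organised around the distance module as a Schur ring over $\mathbb{Z}_n$, with primitivity of the Schur ring doing the heavy lifting in the primitive case. You mention Schur rings only as a fallback for the ``hard part'' (primitive, $d\ge 3$, $k\ge 3$); in fact that is the main engine, and your heuristic ``Galois orbits force large multiplicities versus the absolute bound'' is, as you yourself suspect, not by itself a proof. Your imprimitive reduction via antipodal quotients and halved graphs is sound in outline and close in spirit to how such classifications are typically run, but the step ``Lemma~\ref{lem::antipodal_DRG} rules out the stray non-bipartite antipodal covers of diameter $3$'' is asserted rather than argued and would need real work---this is exactly the kind of case that, in the dicyclic analogue, occupies the bulk of Lemma~\ref{lem::key1}.
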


Also, Miklavi\v{c} and Poto\v{c}nik  gave a characterization of primitive distance-regular Cayley graphs in terms of  distance module and Schur ring (see  \cite{MP03} for the definition).

\begin{lemma}(\cite[Proposition 3.6]{MP03}) \label{lem::Schur_DRG}
Let $\Gamma=\mathrm{Cay}(G,S)$  denote a distance-regular Cayley graph and $\mathcal{D}=\mathcal{D}_\mathbb{Z}(G,S)$ its distance module. Then:
\begin{enumerate}[$(i)$]\setlength{\itemsep}{0pt}
\item $\mathcal{D}$ is a primitive Schur ring over $G$ if and only if $\Gamma$ is a primitive distance-regular graph;
\item $\mathcal{D}$ is the trivial Schur ring  over $G$ if and only if $\Gamma$ is isomorphic to the complete graph.
\end{enumerate}
\end{lemma}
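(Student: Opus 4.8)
The plan is to reduce both statements to a single structural fact: for a distance-regular Cayley graph, the distance module is a Schur ring whose basic sets are precisely the distance classes. Write $S_i=\{g\in G:\partial(1,g)=i\}$ for $0\le i\le d$, so that $S_0=\{1\}$, $S_1=S$, and the $S_i$ partition $G$. Since left multiplication by any element of $G$ is an automorphism of $\Gamma$, one has $\partial(g,h)=i$ if and only if $g^{-1}h\in S_i$; hence each $S_i$ equals its own inverse set, and distance-regularity says exactly that $\underline{S_i}\,\underline{S_j}=\sum_k p_{ij}^{k}\underline{S_k}$ with nonnegative integers $p_{ij}^{k}$ (the intersection numbers of the associated scheme). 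First I would record that these three properties ($S_0=\{1\}$, $S_i=S_i^{-1}$, and multiplicative closure over the partition $\{S_i\}$) are precisely the axioms making $\mathcal{D}=\mathcal{D}_{\mathbb{Z}}(G,S)=\langle\underline{S_0},\dots,\underline{S_d}\rangle_{\mathbb{Z}}$ a Schur ring over $G$ with basic sets $S_0,\dots,S_d$, and that $\Gamma_i=\mathrm{Cay}(G,S_i)$.

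For part $(ii)$, once the basic sets have been identified the argument is immediate: $\mathcal{D}$ is the trivial Schur ring, whose only basic sets are $\{1\}$ and $G\setminus\{1\}$, if and only if there are exactly two distance classes, that is, if and only if $d=1$, which means $S=G\setminus\{1\}$ and $\Gamma\cong K_{|G|}$. I therefore regard $(ii)$ as a direct corollary of the setup.

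The substance lies in part $(i)$, where a Schur ring is \emph{primitive} precisely when it admits no $\mathcal{D}$-subgroup $H$ (a subgroup with $\underline H\in\mathcal{D}$, equivalently a union of basic sets) with $1<|H|<|G|$. The key step I would isolate is a dictionary between proper nontrivial $\mathcal{D}$-subgroups and disconnected distance graphs. In one direction, for each $i\ge1$ I would show that $\langle S_i\rangle$ is a $\mathcal{D}$-subgroup: every power $\underline{S_i}^{\,m}$ lies in $\mathcal{D}$ and, being a nonnegative combination of the $\underline{S_k}$, has support a union of basic sets, so taking the union over all $m$ exhibits $\langle S_i\rangle$ as a union of distance classes. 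Since $\emptyset\neq S_i\subseteq\langle S_i\rangle$ this subgroup is nontrivial, and it is proper exactly when $\Gamma_i$ is disconnected. Thus if $\Gamma$ is imprimitive, so that some $\Gamma_i$ with $1\le i\le d$ is disconnected, then $\langle S_i\rangle$ is a proper nontrivial $\mathcal{D}$-subgroup and $\mathcal{D}$ is imprimitive. Conversely, if $H=\bigcup_{i\in I}S_i$ is a proper nontrivial $\mathcal{D}$-subgroup, choosing $i\in I$ with $i\ge1$ gives $\langle S_i\rangle\le H\lneq G$, so $\Gamma_i$ is disconnected and $\Gamma$ is imprimitive. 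Taking contrapositives yields $(i)$: $\Gamma$ is primitive if and only if $\mathcal{D}$ is primitive.

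The main obstacle is the setup paragraph rather than the deductions. One must argue carefully that distance-regularity is equivalent to the multiplicative closure $\underline{S_i}\,\underline{S_j}=\sum_k p_{ij}^{k}\underline{S_k}$, so that $\mathcal{D}$ genuinely is a Schur ring with the distance classes as its basic sets, and that the support of any nonnegative element of $\mathcal{D}$ is a union of basic sets. Once this is in place, nonnegativity of the structure constants makes $\langle S_i\rangle$ automatically $\mathcal{D}$-closed, and the remainder is bookkeeping with the correspondence between $\mathcal{D}$-subgroups, closed index sets $I$, and connectivity of the graphs $\Gamma_i$.
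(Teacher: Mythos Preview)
The paper does not supply its own proof of this lemma: it is quoted verbatim from \cite[Proposition~3.6]{MP03} and used as a black box to derive Corollary~\ref{cor::pri_DRG}. There is therefore nothing in the present paper to compare your argument against.

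That said, your proposal is the standard and correct argument, and it matches in spirit what one finds in \cite{MP03}. The identification of the basic sets of $\mathcal{D}$ with the distance classes $S_i$ is exactly how the distance module is defined there, and once that is in place part~$(ii)$ is indeed immediate. For part~$(i)$ your dictionary is sound: the key point, which you state correctly, is that $\langle S_i\rangle$ is itself a $\mathcal{D}$-set because the support of each power $\underline{S_i}^{\,m}$ is a union of basic sets (nonnegativity of the intersection numbers is what makes the support argument work). The converse direction---picking any $i\ge 1$ in the index set of a proper nontrivial $\mathcal{D}$-subgroup $H$ and observing $\langle S_i\rangle\le H\lneq G$---is also correct. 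The only place one might ask for a bit more care is the claim that the $S_i$ with $1\le i\le d$ are all nonempty, but this is automatic from the definition of diameter, so your argument goes through.
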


Recall that $\mathrm{Dic}_n\cong \mathbb{Z}_4$ if $n=1$. According to \cite[Theorem 4]{S57} and \cite[Theorem 3.4]{MP09}, we have the following result.

\begin{lemma}\label{lem::Schur_dic}
For every $n\geq 1$, there are no non-trivial primitive Schur rings over the dicyclic group $\mathrm{Dic}_n$.
\end{lemma}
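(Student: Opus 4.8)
The plan is to record how this follows from \cite[Theorem~4]{S57} and \cite[Theorem~3.4]{MP09} after a short reduction. Write $G=\mathrm{Dic}_n$. Since a Schur ring of rank~$2$ is trivial and the full group algebra $\mathbb{Z}G$ (of rank $4n\geq4$) contains the proper non-trivial $\mathcal{S}$-subgroup $\langle\alpha^n\rangle$ (a subgroup that is a union of basic sets), everything comes down to the single claim that \emph{every Schur ring $\mathcal{S}$ over $G$ of rank at least $3$ is imprimitive}, i.e.\ has an $\mathcal{S}$-subgroup $H$ with $1<|H|<|G|$. I would prove this claim by exhibiting such an $H$, using the two structural features of $\mathrm{Dic}_n$: the unique involution $\alpha^n$ (which spans the centre) and the cyclic normal subgroup $\langle\alpha\rangle$ of index~$2$.

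Let $X_0$ be the basic set containing $\alpha^n$. As basic sets are closed under inversion and $\alpha^n$ is the only self-inverse element besides $1$, we have $X_0=X_0^{-1}$. Three subcases are then handled directly. If $X_0=\{\alpha^n\}$, then $\langle\alpha^n\rangle$ is a proper non-trivial $\mathcal{S}$-subgroup. If $X_0\subseteq\langle\alpha\rangle$, then $\langle X_0\rangle$ is an $\mathcal{S}$-subgroup that lies in $\langle\alpha\rangle$, hence is proper, and is non-trivial since it contains $\alpha^n$ (here one could also invoke the classification of Schur rings over cyclic groups, cf.\ \cite{MP03} and Lemma~\ref{lem::cir_DRG}, but the inclusion $\langle X_0\rangle\subseteq\langle\alpha\rangle$ already suffices). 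Finally, if $X_0$ contains an element $\alpha^i\beta$ of order~$4$ lying outside $\langle\alpha\rangle$, then $X_0\supseteq\{\alpha^i\beta,\alpha^n,\alpha^{n+i}\beta\}=\langle\alpha^i\beta\rangle\setminus\{1\}$, and when this is an equality the subgroup $\langle\alpha^i\beta\rangle$ of order~$4$ is a proper non-trivial $\mathcal{S}$-subgroup as soon as $n\geq2$; the degenerate group $\mathrm{Dic}_1\cong\mathbb{Z}_4$ only ever falls into the first two subcases. What is left is the case $n\geq2$ in which $X_0$ \emph{strictly} straddles the non-abelian coset $\langle\alpha\rangle\beta$ and no subgroup is visibly a union of basic sets.

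This residual case is the main obstacle: it is exactly where the genuine combinatorics of Schur rings over dicyclic (in particular generalized quaternion) groups is needed. One would push through it with the Schur--Wielandt principle---the set of group elements occurring with a prescribed coefficient in any element of $\mathcal{S}$ is a union of basic sets---applied to products such as $\underline{X_0}\cdot\underline{X_0}$ and to the other basic sets, to force every basic set to lie inside a single coset of $\langle\alpha\rangle$ (so that $\langle\alpha\rangle$ itself becomes an $\mathcal{S}$-subgroup) or else to extract a smaller $\mathcal{S}$-subgroup. Carrying this out is precisely the content of \cite[Theorem~4]{S57} together with \cite[Theorem~3.4]{MP09}, which describe the structure of Schur rings over groups with a cyclic subgroup of index two and rule out non-trivial primitive ones in this range; invoking them completes the proof that no primitive Schur ring over $\mathrm{Dic}_n$ has rank $\geq3$, which is the assertion of the lemma.
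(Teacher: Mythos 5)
Your proposal is essentially the paper's argument: the paper offers no independent proof of this lemma but deduces it directly from \cite[Theorem 4]{S57} and \cite[Theorem 3.4]{MP09}, and your handling of the decisive residual case rests on exactly these same citations (which in fact already cover your preliminary subcases, so that casework, while correct, is redundant). One small slip: ``basic sets are closed under inversion'' should be that the \emph{collection} of basic sets is inversion-closed, i.e.\ $X_0^{-1}$ is again a basic set, which together with $\alpha^n\in X_0\cap X_0^{-1}$ still yields $X_0=X_0^{-1}$.
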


Recall that Cayley graphs on dicyclic groups are called dicirculants.   If $\Gamma$ is  a primitive distance-regular dicirculant on $4n$ vertices, then its distance module would be a primitive Schur ring over $\mathrm{Dic}_n$ by Lemma \ref{lem::Schur_DRG} (i), and hence can only be  the trivial Schur ring  by Lemma \ref{lem::Schur_dic}. Therefore, Lemma \ref{lem::Schur_DRG} (ii) implies the following result.

\begin{corollary}\label{cor::pri_DRG}
Let $\Gamma$ be a primitive distance-regular dicirculant on $4n$ vertices. Then $\Gamma$ is isomorphic to the complete graph $K_{4n}$.
\end{corollary}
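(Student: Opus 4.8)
The plan is to run the statement straight through the Schur-ring dictionary recorded in Lemmas \ref{lem::Schur_DRG} and \ref{lem::Schur_dic}. Write $\Gamma=\mathrm{Cay}(\mathrm{Dic}_n,S)$ for an appropriate connection set $S\subseteq\mathrm{Dic}_n\setminus\{1\}$ with $S=S^{-1}$, and let $\mathcal{D}=\mathcal{D}_\mathbb{Z}(\mathrm{Dic}_n,S)$ be its distance module. The hypothesis is that $\Gamma$ is a primitive distance-regular graph, so part $(i)$ of Lemma \ref{lem::Schur_DRG} applies verbatim and yields that $\mathcal{D}$ is a \emph{primitive} Schur ring over the group $\mathrm{Dic}_n$.

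Next I would invoke Lemma \ref{lem::Schur_dic}, which asserts that for every $n\ge 1$ the only primitive Schur ring over $\mathrm{Dic}_n$ is the trivial one (this is the step that already absorbs the degenerate case $n=1$, where $\mathrm{Dic}_1\cong\mathbb{Z}_4$). Combining this with the previous paragraph forces $\mathcal{D}$ to be the trivial Schur ring over $\mathrm{Dic}_n$. Finally, part $(ii)$ of Lemma \ref{lem::Schur_DRG} translates this back to the graph: a distance-regular Cayley graph whose distance module is the trivial Schur ring is isomorphic to the complete graph. Hence $\Gamma\cong K_{4n}$, which is exactly the claim.

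There is no real obstacle in the deduction itself; the entire content has been outsourced to the classification of primitive Schur rings over dicyclic groups, i.e.\ to Lemma \ref{lem::Schur_dic}, which in turn rests on the cited results of Scott and of Miklavi\v{c}--Poto\v{c}nik. The only points deserving a line of care are bookkeeping ones: checking that $\Gamma$ really does arise as $\mathrm{Cay}(\mathrm{Dic}_n,S)$ so that Lemma \ref{lem::Schur_DRG} is applicable, and noting that "primitive distance-regular" is precisely the hypothesis needed to trigger part $(i)$ of that lemma. It is worth recording that this corollary is exactly the reduction that lets the proof of Theorem \ref{thm::main} henceforth assume $\Gamma$ is imprimitive (hence, by \cite[Theorem 4.2.1]{BCN89}, bipartite or antipodal), which is where the substantive combinatorial work of the paper will take place.
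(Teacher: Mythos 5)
Your proposal is correct and follows exactly the paper's own argument: apply Lemma \ref{lem::Schur_DRG}~$(i)$ to get a primitive Schur ring, invoke Lemma \ref{lem::Schur_dic} to conclude it is trivial, and finish with Lemma \ref{lem::Schur_DRG}~$(ii)$. No further comment is needed.
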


Let $G$ be a transitive permutation group acting on a set $X$. An \textit{imprimitivity system}  for $G$ is a partition $\mathcal{B}$ of $X$ which is invariant under the action of $G$, i.e., for every block $B\in \mathcal{B}$ and for every $g\in G$, we have $B^g=B$ or $B^g\cap B=\emptyset$.

\begin{lemma}(\cite[Lemma 2.2]{MP07}) \label{lem::block}
Let $\Gamma=\mathrm{Cay}(G,S)$ denote a Cayley graph with the group $G$ acting regularly on the vertex set of $\Gamma$ by left multiplication. Suppose there exists an imprimitivity system $\mathcal{B}$ for $G$. Then the block $B\in\mathcal{B}$ containing the identity  $1\in G$ is a subgroup in $G$. Moreover,
\begin{enumerate}[$(i)$]\setlength{\itemsep}{0pt}
\item if $B$ is normal in $G$, then $\Gamma_\mathcal{B}=\mathrm{Cay}(G/B,S/B)$, where $S/B=\{sB\mid s\in S\setminus B\}$;
\item if there exists an abelian subgroup $A$ in $G$ such that $G = AB$, then $\Gamma_\mathcal{B}$ is isomorphic to a
Cayley graph on the group $A/(A\cap B)$.
\end{enumerate}
\end{lemma}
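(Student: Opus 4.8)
The plan is to treat the three assertions in turn, using throughout the regular action of $G$ on $V(\Gamma)=G$ by left multiplication $\sigma_g\colon x\mapsto gx$. \emph{Step 1 (the block $B$ is a subgroup, and $\mathcal{B}$ is the coset partition).} Since $\mathcal{B}$ is $G$-invariant, each $\sigma_g$ permutes the blocks. For $g\in B$ the block $\sigma_g(B)=gB$ contains $g$ and hence meets $B$, so $gB=B$; running $g$ over $B$ gives $gh\in gB=B$ for all $g,h\in B$, while $gB=B\ni 1$ forces $g^{-1}\in B$, so $B\le G$. The same computation shows that for arbitrary $g\in G$ the unique block through $g$ is $gB$, so the blocks of $\mathcal{B}$ are precisely the left cosets of $B$ (in particular all of size $|B|$), i.e.\ $\mathcal{B}=G/B$.

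\emph{Step 2 (proof of $(i)$).} Assume $B\trianglelefteq G$. Then $G/B$ is a group, and from $S=S^{-1}$, $1\notin S$ and normality one checks that $S/B=\{sB:s\in S\setminus B\}$ is an inverse-closed subset of $(G/B)\setminus\{B\}$; so $\mathrm{Cay}(G/B,S/B)$ is a well-defined Cayley graph on the vertex set $\mathcal{B}$. It remains to compare adjacencies: $g_1B\sim g_2B$ in $\Gamma_\mathcal{B}$ iff $b_1^{-1}g_1^{-1}g_2b_2\in S$ for some $b_1,b_2\in B$, whereas $g_1B\sim g_2B$ in $\mathrm{Cay}(G/B,S/B)$ iff $g_1B\ne g_2B$ and $g_1^{-1}g_2B\cap S\ne\emptyset$. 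The implication ``$\Leftarrow$'' is immediate (take $b_1=1$); for ``$\Rightarrow$'' I would use normality to move both $B$-factors to the right, rewriting $b_1^{-1}g_1^{-1}g_2b_2=g_1^{-1}g_2b$ with $b=\big((g_2^{-1}g_1)b_1^{-1}(g_1^{-1}g_2)\big)b_2\in B$, so that $g_1^{-1}g_2B\cap S\ne\emptyset$; since $g_1B\ne g_2B$ by definition of the quotient graph, the two graphs coincide.

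\emph{Step 3 (proof of $(ii)$).} Here $B$ need not be normal, so I would use the hypothesis $G=AB$ instead. Every left coset of $B$ equals $aB$ for some $a\in A$, and $a_1B=a_2B\iff a_1^{-1}a_2\in A\cap B\iff a_1(A\cap B)=a_2(A\cap B)$; since $A$ is abelian, $A\cap B\trianglelefteq A$ and $aB\mapsto a(A\cap B)$ is a well-defined bijection from $V(\Gamma_\mathcal{B})=\mathcal{B}$ onto the group $A/(A\cap B)$. Transporting the edge set of $\Gamma_\mathcal{B}$ along this bijection, I claim the result is invariant under left translation by $A/(A\cap B)$: for $c\in A$ we have $(ca_1)^{-1}(ca_2)=a_1^{-1}a_2$, so the condition ``$b_1^{-1}a_1^{-1}a_2b_2\in S$ for some $b_1,b_2\in B$'' defining $ca_1B\sim ca_2B$ coincides with the one defining $a_1B\sim a_2B$. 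Hence $A/(A\cap B)$ acts on the transported graph regularly and by automorphisms, so $\Gamma_\mathcal{B}$ is isomorphic to a Cayley graph on $A/(A\cap B)$, with connection set $\{a(A\cap B):B\text{ adjacent to }aB\text{ in }\Gamma_\mathcal{B}\}$, which is inverse-closed since $\Gamma_\mathcal{B}$ is undirected.

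I do not expect a serious obstacle, only careful bookkeeping: in $(i)$ one must respect the left/right distinction when applying normality in the ``$\Rightarrow$'' step, and in $(ii)$ the point to notice is that one should \emph{not} seek a faithful action of $A$ (or of $A/(A\cap B)$) on $\mathcal{B}$ by automorphisms of $\Gamma$ — faithfulness may fail when $B$ is not normal in $G$ — but only a regular action by automorphisms of the \emph{quotient}, which left translation provides directly.
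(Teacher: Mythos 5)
Your proof is correct. Note that the paper does not prove this lemma at all --- it is quoted verbatim from \cite[Lemma 2.2]{MP07} --- so there is no internal argument to compare against; your three steps (blocks through $1$ form a subgroup and the blocks are its left cosets; the coset identification with normality handled via conjugation for $(i)$; the regular action of $A/(A\cap B)$ by left translation on the blocks, with abelianness used only to make $A\cap B$ normal in $A$, for $(ii)$) constitute the standard proof of this statement, essentially the one given in the cited source.
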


By Lemmas \ref{lem::imprimitive} and \ref{lem::block}, we obtain the following corollary.

\begin{corollary}\label{cor::DRG_dic}
Let $\Gamma$ denote a distance-regular dicirculant.
\begin{enumerate}[$(i)$]\setlength{\itemsep}{0pt}
\item If $\Gamma$ is antipodal, then the antipodal quotient $\overline{\Gamma}$ is a distance-regular circulant or a distance-regular dicirculant.
\item If $\Gamma$ is bipartite, then the halved graphs $\Gamma^+$ and $\Gamma^-$ are distance-regular circulants or distance-regular dicirculants.
\end{enumerate}
\end{corollary}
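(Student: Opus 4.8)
The plan is to use the regular action of $\mathrm{Dic}_n$ on $V(\Gamma)$ by left multiplication, together with two observations: the fibre partition of an antipodal distance-regular graph is $\mathrm{Aut}(\Gamma)$-invariant, being cut out by the automorphism-invariant relation $u\,\mathcal R\,v\Leftrightarrow\partial(u,v)\in\{0,d\}$, and the bipartition of a connected bipartite graph is unique, hence also $\mathrm{Aut}(\Gamma)$-invariant. In either case the relevant partition is an imprimitivity system for the regular action, so by Lemma~\ref{lem::block} the block $B$ through the identity is a subgroup of $\mathrm{Dic}_n$, and the antipodal quotient (resp.\ each halved graph) is a Cayley graph on a group built from $B$. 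Since a distance-regular dicirculant of valency at most $2$ is $C_4\cong\mathrm{Cay}(\mathbb{Z}_4,\{1,3\})$, a circulant for which the corollary is immediate, we may assume the valency is at least $3$ and invoke Lemma~\ref{lem::imprimitive}; this already guarantees that $\overline{\Gamma}$ and the halved graphs are distance-regular, and the remaining content is to pin down the underlying groups.

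For part (ii): $B$ is a subgroup of index $2$ in $\mathrm{Dic}_n$, and a short inspection of the subgroup lattice shows that the only index-$2$ subgroups are the cyclic group $\langle\alpha\rangle\cong\mathbb{Z}_{2n}$ and, when $n$ is even, the two copies $\langle\alpha^2,\beta\rangle$ and $\langle\alpha^2,\alpha\beta\rangle$ of $\mathrm{Dic}_{n/2}$. Thus $B$ is cyclic or dicyclic, and since $B$ acts regularly by left multiplication on each of the two bipartition classes, these translations being restrictions of automorphisms of $\Gamma$ to the halved graphs, both $\Gamma^+$ and $\Gamma^-$ are Cayley graphs on $B$, that is, circulants or dicirculants.

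For part (i): if $d\leq 2$ then $\Gamma$ is complete or complete multipartite and $\overline{\Gamma}$ is a complete graph, so assume $d\geq 3$ and let $B$ be the fibre through $1$. If $B\not\subseteq\langle\alpha\rangle$, then $\langle\alpha\rangle$ is an abelian subgroup with $\mathrm{Dic}_n=\langle\alpha\rangle B$ because $\langle\alpha\rangle$ has index $2$, so Lemma~\ref{lem::block}(ii) shows that $\overline{\Gamma}$ is isomorphic to a Cayley graph on the cyclic group $\langle\alpha\rangle/(\langle\alpha\rangle\cap B)$, hence a circulant. If $B\subseteq\langle\alpha\rangle$, then $B$ is normal in $\mathrm{Dic}_n$ because $\beta$ inverts $\alpha$, so $\overline{\Gamma}=\mathrm{Cay}(\mathrm{Dic}_n/B,S/B)$ by Lemma~\ref{lem::block}(i); writing $B=\langle\alpha^g\rangle$ and computing the orders of the images of $\alpha$ and $\beta$, one finds $\mathrm{Dic}_n/B\cong\mathrm{Dic}_{n/|B|}$ when $\alpha^n\notin B$, so that $\overline{\Gamma}$ is a dicirculant, whereas $\mathrm{Dic}_n/B$ is dihedral when $\alpha^n\in B$.

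The step I expect to require the most care is showing that the remaining sub-case, $B=\langle\alpha^g\rangle\subseteq\langle\alpha\rangle$ with $\alpha^n\in B$ (where $\mathrm{Dic}_n/B$ is dihedral, so Lemma~\ref{lem::block} by itself does not deliver a circulant or dicirculant), cannot arise once $d\geq 3$ and $n\geq 2$. The point is that $\alpha^n\in B$ forces $g\mid n$; since $\Gamma$ is an antipodal cover of diameter at least $3$, the edges between the fibre $B$ and any other fibre form an empty set or a $1$-factor, so every coset of $B$ other than $B$ itself meets the connection set $\alpha^R\cup\alpha^T\beta$ in at most one element. For a coset of the form $\alpha^j\beta B$ this intersection is recorded by the set $T\cap(j+g\mathbb{Z}_{2n})$, which, using both $T=n+T$ and $g\mid n$ (so that $j$ and $j+n$ lie in the same coset of $B$), is invariant under the fixed-point-free involution $i\mapsto i+n$ and hence has even cardinality; being at most $1$, it is empty. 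Therefore $T=\emptyset$, so $S\subseteq\langle\alpha\rangle$ and $\Gamma$ is disconnected, a contradiction; and for $n=1$ the group $\mathrm{Dic}_1\cong\mathbb{Z}_4$ is cyclic, so $\Gamma$ is a circulant outright. Granting this exclusion, every surviving case of part (i) yields a distance-regular circulant or dicirculant, and the remainder is routine bookkeeping with subgroups of $\mathrm{Dic}_n$ together with Lemmas~\ref{lem::imprimitive} and~\ref{lem::block}.
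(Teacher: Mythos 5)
Your proposal is correct and follows the same overall strategy as the paper: the antipodal partition (resp.\ the bipartition) is an imprimitivity system for the regular action of $\mathrm{Dic}_n$, Lemma~\ref{lem::block} makes the block $B$ through the identity a subgroup, Lemma~\ref{lem::imprimitive} gives distance-regularity of $\overline{\Gamma}$ and the halved graphs, and for (ii) one observes that every index-$2$ subgroup of $\mathrm{Dic}_n$ is cyclic or dicyclic and acts regularly on its bipartition class (the paper gets $\Gamma^-$ from $\Gamma^+$ by vertex-transitivity instead of letting $B$ act on the other coset, an immaterial difference).

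Where you genuinely go beyond the paper is in part (i), in the subcase $B\subseteq\langle\alpha\rangle$. The paper simply invokes Lemma~\ref{lem::block}(i) and declares $\mathrm{Cay}(\mathrm{Dic}_n/B,S/B)$ ``a dicirculant'', which is not automatic: if $\alpha^n\in B$ (e.g.\ $B=\{1,\alpha^n\}$) the quotient $\mathrm{Dic}_n/B$ is dihedral, not dicyclic, so Lemma~\ref{lem::block}(i) alone does not place $\overline{\Gamma}$ in the advertised classes. Your exclusion of this subcase is exactly the missing ingredient and is correct: for $d\geq 3$ the edges between distinct fibres form an empty set or a $1$-factor, so $|S\cap xB|\leq 1$ for every coset $xB\neq B$; for a coset $\alpha^j\beta B$ with $B=\langle\alpha^g\rangle$ and $g\mid n$ the relevant set $T\cap(j+g\mathbb{Z}_{2n})$ is stable under the fixed-point-free involution $i\mapsto i+n$ (using $T=n+T$), hence of even size, hence empty; then $T=\emptyset$ contradicts connectedness, and the cases $d\leq 2$, valency $\leq 2$, and $n=1$ are disposed of directly. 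Your computation that $\mathrm{Dic}_n/B\cong\mathrm{Dic}_{n/|B|}$ when $\alpha^n\notin B$ is also right. So your write-up not only proves the corollary but repairs an imprecision in the paper's own proof; the remaining bookkeeping (the $k\leq 2$ and $d\leq 2$ cases, which the paper passes over silently since Lemma~\ref{lem::imprimitive} assumes $k\geq 3$) is handled correctly as well.
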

\begin{proof}
Let  $\Gamma$ be defined on   $\mathrm{Dic}_n=\langle \alpha,\beta \mid \alpha^{2n}=1, \beta^2=\alpha^n, \beta^{-1}\alpha\beta=\alpha^{-1}\rangle$.  First assume that $\Gamma$ is antipodal. Since $\mathrm{Dic}_n$ acts regularly on the vertex set of $\Gamma$ by left multiplication,  the antipodal classes of $\Gamma$ form an imprimitivity system $\mathcal{B}$ for $\mathrm{Dic}_n$. Let $B\in \mathcal{B}$ denote the antipodal class of $\Gamma$ containing the identity of $\mathrm{Dic}_n$.  By Lemma \ref{lem::block}, $B$ is a subgroup of $\mathrm{Dic}_n$. If $B$ is a subgroup of $\langle\alpha\rangle$, then $B$ is  normal in $\mathrm{Dic}_n$, and it follows from  Lemma \ref{lem::block} (i) that $\overline{\Gamma}=\Gamma_\mathcal{B}=\mathrm{Cay}(G/B,S/B)$, which is  a dicirculant. If $B$ is not a subgroup of $\langle\alpha\rangle$, then $\mathrm{Dic}_n=\langle\alpha\rangle B$, and Lemma \ref{lem::block} (ii) implies that $\overline{\Gamma}=\Gamma_\mathcal{B}$ is isomorphic to a Cayley graph on the group $\langle\alpha\rangle/(\langle\alpha\rangle\cap B)$. Hence, $\overline{\Gamma}$ is a circulant. Now assume that $\Gamma$ is bipartite. Let $\Gamma^+$ denote the halved graph containing the identity of $\mathrm{Dic}_n$. Since the bipartition sets of $\Gamma$ form an imprimitivity system for $\mathrm{Dic}_n$, again by Lemma \ref{lem::block}, $V(\Gamma^+)$ is a subgroup of $\mathrm{Dic}_n$. It is easy to see that $V(\Gamma^+)$  acts regularly on itself by left multiplication as a subgroup of $\mathrm{Aut}(\Gamma^+)$. Since every subgroup of $\mathrm{Dic}_n$ is cyclic or dicyclic (cf. \cite{RD16}), $V(\Gamma^+)$ is a circulant or a dicirculant. Moreover, since $\Gamma$ is vertex transitive, the two halved graphs $\Gamma^+$ and $\Gamma^-$ are isomorphic. Hence, $\Gamma^-$ is  also a circulant or a dicirculant. Note that $\overline{\Gamma}$, $\Gamma^+$ and $\Gamma^-$ are distance-regular by Lemma \ref{lem::imprimitive}. The result follows.
\end{proof}

Let $n$ be a positive integer, and let $\omega$ be a primitive $n$-th root of unity. Let  $\mathbb{F}=\mathbb{Q}(\omega)$ denote the $n$-th cyclotomic field over the rationals. For a subset $A\subseteq \mathbb{Z}_n$, let $\Delta_A:\mathbb{Z}_n\rightarrow \mathbb{F}$ be the \textit{characteristic function} of $A$, that is, $\Delta_A(z)=1$ if $z\in A$, and $\Delta_A(z)=0$ otherwise. In particular, if $A=\{a\}$, then we write $\Delta_a$ instead of $\Delta_{\{a\}}$.
Let $\mathbb{F}^{\mathbb{Z}_n}$ be the $\mathbb{F}$-vector space consisting of all functions $f:\mathbb{Z}_n\rightarrow \mathbb{F}$ with the scalar multiplication and addition defined point-wise. We denote by $(\mathbb{F}^{\mathbb{Z}_n},\cdot)$ the  $\mathbb{F}$-algebra obtained from $\mathbb{F}^{\mathbb{Z}_n}$ by defining the multiplication point-wise, and $(\mathbb{F}^{\mathbb{Z}_n},\ast)$ the $\mathbb{F}$-algebra obtained from $\mathbb{F}^{\mathbb{Z}_n}$ by defining the multiplication as the \textit{convolution}  (see \cite{MP07}):
\begin{equation}\label{equ::fourier}
(f\ast g)(z)=\sum_{i\in \mathbb{Z}_n}f(i)g(z-i),~~f,g\in \mathbb{F}^{\mathbb{Z}_n}.
\end{equation}
The \textit{Fourier transformation} $\mathcal{F}:(\mathbb{F}^{\mathbb{Z}_n},\ast)\rightarrow(\mathbb{F}^{\mathbb{Z}_n},\cdot)$ is defined by
\begin{equation}\label{equ::fourier1}
(\mathcal{F}f)(z)=\sum_{i\in \mathbb{Z}_n}f(i)\omega^{iz},~~f\in \mathbb{F}^{\mathbb{Z}_n}.
\end{equation}
It is easy to verify that $\mathcal{F}$ is an algebra isomorphism from  $(\mathbb{F}^{\mathbb{Z}_n},\ast)$ to $(\mathbb{F}^{\mathbb{Z}_n},\cdot)$.

Let $\mathbb{Z}_n^\ast=\{i\in \mathbb{Z}_n\mid \mathrm{gcd}(i,n)=1\}$ denote the multiplicative group of units in the ring  $\mathbb{Z}_n$. Then  $\mathbb{Z}_n^\ast$ acts on $\mathbb{Z}_n$ by multiplication. It is known that each orbit of this action consists of all elements of a given order in the additive group $\mathbb{Z}_n$. Consequently, each orbit is of  the form  $O_r=\{c\cdot \frac{n}{r}\in \mathbb{Z}_n\mid c\in \mathbb{Z}_n^\ast\}$, where $r$ is some positive divisor of $n$.

The following three lemmas present some basic facts about Fourier transformation.

\begin{lemma}(\cite[Corollary 3.2]{MP07})\label{lem::Fourier2}
If $A$ is a subset of $\mathbb{Z}_{n}$ and $\mathrm{Im}(\mathcal{F}\Delta_A)\subseteq\mathbb{Q}$, then $A$ is a union of some orbits of the action of $\mathbb{Z}_n^\ast$ on $\mathbb{Z}_n$ by multiplication, and $\mathrm{Im}(\mathcal{F}\Delta_A)\subseteq\mathbb{Z}$.
\end{lemma}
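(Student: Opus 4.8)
The plan is to exploit the Galois action on the values of the Fourier transform. Recall that $\mathbb{F}=\mathbb{Q}(\omega)$ with $\omega$ a primitive $n$-th root of unity, and that there is the classical isomorphism $\mathrm{Gal}(\mathbb{F}/\mathbb{Q})\cong\mathbb{Z}_n^\ast$: for each $c\in\mathbb{Z}_n^\ast$ there is a well-defined automorphism $\sigma_c\in\mathrm{Gal}(\mathbb{F}/\mathbb{Q})$ with $\sigma_c(\omega)=\omega^c$ (well-defined precisely because $\mathrm{gcd}(c,n)=1$). The first step is to apply $\sigma_c$ to $(\mathcal{F}\Delta_A)(z)=\sum_{i\in A}\omega^{iz}$ and record the two elementary identities
\[
\sigma_c\bigl((\mathcal{F}\Delta_A)(z)\bigr)=\sum_{i\in A}\omega^{ciz}=(\mathcal{F}\Delta_A)(cz)=(\mathcal{F}\Delta_{cA})(z),
\]
valid for every $z\in\mathbb{Z}_n$; here the last equality comes from reindexing the sum via the bijection $i\mapsto ci$ of $A$ onto $cA$ (a bijection exactly because $c$ is a unit), and all exponents are read modulo $n$.

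The second step uses the hypothesis $\mathrm{Im}(\mathcal{F}\Delta_A)\subseteq\mathbb{Q}$. Then every value $(\mathcal{F}\Delta_A)(z)$ is fixed by the whole Galois group, so the displayed chain of equalities gives $(\mathcal{F}\Delta_{cA})(z)=(\mathcal{F}\Delta_A)(z)$ for all $z\in\mathbb{Z}_n$ and all $c\in\mathbb{Z}_n^\ast$. Since $\mathcal{F}$ is an algebra isomorphism, hence injective, this yields $\Delta_{cA}=\Delta_A$, i.e. $cA=A$, for every $c\in\mathbb{Z}_n^\ast$. Consequently $A$ is invariant under the multiplicative action of $\mathbb{Z}_n^\ast$ on $\mathbb{Z}_n$, and therefore $A$ is a union of orbits $O_r$ of this action.

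For the integrality claim, I would simply observe that each $(\mathcal{F}\Delta_A)(z)=\sum_{i\in A}\omega^{iz}$ is a finite sum of roots of unity, hence an algebraic integer; being rational by hypothesis, and $\mathbb{Z}$ being integrally closed in $\mathbb{Q}$, it must lie in $\mathbb{Z}$, so $\mathrm{Im}(\mathcal{F}\Delta_A)\subseteq\mathbb{Z}$. There is no genuine obstacle here, since the statement is a short consequence of cyclotomic Galois theory; the only points deserving a moment's care are the well-definedness of $\sigma_c$ and the correct bookkeeping of exponents modulo $n$ when reindexing the sum.
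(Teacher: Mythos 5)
Your argument is correct: the Galois-theoretic step $\sigma_c\bigl((\mathcal{F}\Delta_A)(z)\bigr)=(\mathcal{F}\Delta_{cA})(z)$, the use of injectivity of $\mathcal{F}$ to conclude $cA=A$ for all $c\in\mathbb{Z}_n^\ast$, and the ``algebraic integer plus rational implies integer'' step for integrality are all sound. The paper itself does not prove this lemma but quotes it from \cite[Corollary 3.2]{MP07}, and your proof is essentially the standard cyclotomic Galois argument underlying that reference, so there is nothing further to add.
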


\begin{lemma}(\cite[Lemma 3.3]{MP07})\label{lem::Fourier4}
 Let $r$ be a  positive divisor  of $n$, and let $\omega$ be a primitive $n$-th root of unity. If $A$ is a subset of $\mathbb{Z}_{n}$, then
$$\mathcal{F}\Delta_{A}\left(\frac{n}{r}\right)=e_0+e_1\xi+\cdots+e_{r-1}\xi^{r-1},$$ 
where $\xi=\omega^{\frac{n}{r}}$ and $e_i =|A\cap (i+r\mathbb{Z}_n)|$ for $0\leq i\leq r-1$.
\end{lemma}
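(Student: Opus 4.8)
The plan is to evaluate $\mathcal{F}\Delta_A(n/r)$ directly from the definition of the Fourier transformation and then reorganize the resulting sum of roots of unity by grouping indices according to their residue modulo $r$.

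First I would apply \eqref{equ::fourier1} with $f=\Delta_A$ and $z=n/r$ to obtain
\[
\mathcal{F}\Delta_A\!\left(\frac{n}{r}\right)=\sum_{j\in\mathbb{Z}_n}\Delta_A(j)\,\omega^{jn/r}=\sum_{j\in A}\xi^{j},
\qquad \text{where } \xi:=\omega^{n/r}.
\]
The crucial observation I would record next is that $\xi^{r}=\omega^{n}=1$, since $\omega$ is a primitive $n$-th root of unity; hence $\xi$ is a primitive $r$-th root of unity and, in particular, $\xi^{j}$ depends only on the residue class of $j$ modulo $r$. Then I would split $A$ into the disjoint pieces $A\cap(i+r\mathbb{Z}_n)$ for $0\le i\le r-1$ and note that every $j$ lying in the $i$-th piece contributes $\xi^{i}$ to the sum. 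Collecting equal terms gives
\[
\sum_{j\in A}\xi^{j}=\sum_{i=0}^{r-1}\bigl|A\cap(i+r\mathbb{Z}_n)\bigr|\,\xi^{i}=\sum_{i=0}^{r-1}e_i\,\xi^{i},
\]
which is exactly the asserted identity.

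I expect no genuine obstacle here: the whole argument is a one-step reindexing of a finite sum. The only point meriting care is the periodicity $\xi^{j+r}=\xi^{j}$ that licenses the regrouping, and that is immediate from $\xi^{r}=1$.
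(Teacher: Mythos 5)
Your proof is correct and is exactly the standard direct computation (expand the Fourier transform at $z=n/r$, use $\xi^r=\omega^n=1$, and group the summands by residue class modulo $r$); the paper itself gives no proof but cites \cite[Lemma 3.3]{MP07}, whose argument is the same routine regrouping. No issues.
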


Let $H$ be a subgroup of $G$. A \textit{transversal} of  $H$  is a subset of $G$ that contains exactly one element from each of the right cosets of $H$ in $G$.

\begin{lemma}(\cite[Lemma 3.4]{MP07})\label{lem::Fourier1}
Let $r$ be a positive divisor of $n$, and let $A$ be a transversal of the subgroup $r\mathbb{Z}_n$ in $\mathbb{Z}_n$. If $z=m\frac{n}{r}$ ($m\not\in r\mathbb{Z}_n$) is an arbitrary element of $\frac{n}{r}\mathbb{Z}_n\setminus\{0\}$, then $\mathcal{F}\Delta_A(z)=0$.
\end{lemma}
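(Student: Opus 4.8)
The plan is to evaluate $\mathcal{F}\Delta_A(z)$ directly from the definition \eqref{equ::fourier1} and recognize the answer as a sum of $r$-th roots of unity that vanishes. First I would record the structure of $A$. Since $r\mid n$, the subgroup $r\mathbb{Z}_n=\{0,r,2r,\dots,n-r\}$ has order $n/r$ and hence index $r$ in $\mathbb{Z}_n$, its cosets being $i+r\mathbb{Z}_n$ for $0\le i\le r-1$. By the definition of a transversal, $A$ meets each of these cosets in exactly one element, so the reduction map $a\mapsto (a\bmod r)$ is a bijection from $A$ onto $\{0,1,\dots,r-1\}$.

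Next I would unwind the Fourier transform. By \eqref{equ::fourier1}, $\mathcal{F}\Delta_A(z)=\sum_{a\in A}\omega^{az}$. Put $\xi=\omega^{n/r}$; since $\omega$ is a primitive $n$-th root of unity and $r\mid n$, $\xi$ is a primitive $r$-th root of unity (the same $\xi$ as in Lemma \ref{lem::Fourier4}). Substituting $z=mn/r$ gives $\omega^{az}=\xi^{am}$, which depends only on $am\bmod r$, hence, for the fixed $m$, only on $a\bmod r$. Combining this with the bijection above yields $\mathcal{F}\Delta_A(z)=\sum_{j=0}^{r-1}\xi^{jm}=\sum_{j=0}^{r-1}(\xi^m)^j$.

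Finally I would verify $\xi^m\ne 1$: otherwise the order $r$ of $\xi$ would divide $m$, that is, $m\in r\mathbb{Z}_n$, contrary to hypothesis. Since also $(\xi^m)^r=(\xi^r)^m=1$, the factorization $x^r-1=(x-1)(1+x+\dots+x^{r-1})$ evaluated at $x=\xi^m$ forces $\sum_{j=0}^{r-1}(\xi^m)^j=0$, which is the assertion.

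There is no genuine obstacle here; the only points requiring care are that the residues $\{a\bmod r:a\in A\}$ form a complete residue system modulo $r$ with multiplicity one — so that the exponents $am\bmod r$ run over $m\mathbb{Z}_r$ with the right multiplicities — and the check $\xi^m\ne 1$, both immediate from the definitions. Equivalently, writing $e_i=|A\cap(i+r\mathbb{Z}_n)|=1$ for each $i$, one may read the computation off the identity $\mathcal{F}\Delta_A(mn/r)=\sum_{i=0}^{r-1}e_i(\xi^m)^i$, a mild twist of Lemma \ref{lem::Fourier4}.
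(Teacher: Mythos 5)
Your argument is correct: the transversal condition makes the exponents run exactly once over each residue class mod $r$, the substitution $z=mn/r$ turns the sum into $\sum_{j=0}^{r-1}(\xi^m)^j$ with $\xi=\omega^{n/r}$ a primitive $r$-th root of unity, and $m\notin r\mathbb{Z}_n$ gives $\xi^m\neq 1$, so the geometric sum vanishes. The paper does not prove this lemma itself but quotes it from \cite[Lemma 3.4]{MP07}, and your computation is precisely the standard argument behind that cited result (a special case, with all $e_i=1$, of the expansion in Lemma \ref{lem::Fourier4}), so there is nothing to add.
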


\begin{lemma}(\cite[Lemma 4.3]{MP07})\label{lem::Fourier3}
Let $p$ be a  prime divisor of $n$, and let $A$ be a transversal of the subgroup $\frac{n}{p}\mathbb{Z}_n$ in $\mathbb{Z}_n$. If $A$ is a union of some orbits of the action of $\mathbb{Z}_n^\ast$ on $\mathbb{Z}_n$ by multiplication, then $p = 2$ or $A = p\mathbb{Z}_n$.
\end{lemma}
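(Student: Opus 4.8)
The plan is to prove the statement directly: assuming $p$ is odd, I will show $A=p\mathbb{Z}_n$. First the bookkeeping. The subgroup $H:=\frac np\mathbb{Z}_n$ is generated by $n/p$, an element of order $p$, so $|H|=p$ and a transversal has $|A|=[\mathbb{Z}_n:H]=n/p$ elements; since $p\mid n$ one also has $|p\mathbb{Z}_n|=n/p$. Hence it suffices to prove the single inclusion $A\subseteq p\mathbb{Z}_n$, for then equality of cardinalities forces $A=p\mathbb{Z}_n$. The key structural remark is that $p\mathbb{Z}_n=\{x\in\mathbb{Z}_n:\,p\mid x\}$ is invariant under multiplication by $\mathbb{Z}_n^\ast$ (a unit is coprime to $p$), so it is itself a union of $\mathbb{Z}_n^\ast$-orbits; consequently every orbit $O_r$ is either entirely contained in $p\mathbb{Z}_n$ or entirely disjoint from it. Since $A$ is by hypothesis a union of orbits, proving $A\subseteq p\mathbb{Z}_n$ amounts to ruling out that $A$ contains any orbit disjoint from $p\mathbb{Z}_n$.

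So suppose, for contradiction, that some orbit $O_r\subseteq A$ is disjoint from $p\mathbb{Z}_n$, and fix $a\in O_r$; then $p\nmid a$. The idea is to exhibit a unit $c\in\mathbb{Z}_n^\ast$ for which $(c-1)a$ is a \emph{nonzero} element of $H$. Granting such a $c$: on one hand $ca-a=(c-1)a\in H$ gives $ca\in a+H$, and on the other hand $(c-1)a\neq 0$ gives $ca\neq a$; since $A$ is $\mathbb{Z}_n^\ast$-invariant we also have $ca\in A$. Thus $a$ and $ca$ are two distinct elements of $A$ lying in the single coset $a+H$, contradicting the fact that the transversal $A$ meets each coset of $H$ exactly once. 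This contradiction shows $A$ contains no orbit disjoint from $p\mathbb{Z}_n$, whence $A\subseteq p\mathbb{Z}_n$ and finally $A=p\mathbb{Z}_n$.

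The heart of the proof — and the only place where the hypothesis $p\neq 2$ enters — is the construction of the unit $c$. Write $k=v_p(n)$ for the $p$-adic valuation of $n$. The requirement $(c-1)a\in H$, i.e. $\frac np\mid (c-1)a$, reads prime by prime as $v_q\big((c-1)a\big)\ge v_q(n/p)$ for every prime $q$; the requirement $(c-1)a\neq 0$ is guaranteed as soon as $v_p\big((c-1)a\big)<v_p(n)$. Because $p\nmid a$, the $p$-part forces $v_p(c-1)=k-1$, while for $q\neq p$ it suffices that $c\equiv 1\pmod{q^{v_q(n)}}$. I would build $c$ by the Chinese Remainder Theorem: set $c\equiv 1$ modulo each $q^{v_q(n)}$ with $q\neq p$, and choose the residue of $c$ modulo $p^k$ to be a unit with $v_p(c-1)=k-1$. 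For $k\ge 2$ one may take $c\equiv 1+p^{\,k-1}\pmod{p^k}$ (valid for any $p$, since this is $\equiv1\pmod p$ and has $v_p(c-1)=k-1$); for $k=1$ one needs a residue modulo $p$ that is neither $0$ nor $1$, which exists precisely because $p\ge 3$. This last point is exactly where evenness breaks the argument: for $p=2$ the only unit modulo $2$ is $1$, no such $c$ exists, and the conclusion genuinely fails, accounting for the exceptional clause of the statement. I expect this unit-construction step to be the main obstacle to writing cleanly, since it requires the valuation bookkeeping above together with the verification that the constructed $c$ is simultaneously a unit modulo $n$ and yields $(c-1)a\in H\setminus\{0\}$.

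A less direct alternative would route through the Fourier machinery: Lemma~\ref{lem::Fourier1} (with $r=n/p$) gives $\mathcal{F}\Delta_A(z)=0$ for all $z\in p\mathbb{Z}_n\setminus\{0\}$, and since $A$ is a union of orbits $\mathcal{F}\Delta_A$ is integer-valued; evaluating at $z=n/p$ via Lemma~\ref{lem::Fourier4} and forcing rationality yields $|A\cap(1+p\mathbb{Z}_n)|=\cdots=|A\cap((p-1)+p\mathbb{Z}_n)|$. However this only pins the residue counts down to a one-parameter family and does not by itself give $A\subseteq p\mathbb{Z}_n$, so the combinatorial coset argument above is the cleaner finish.
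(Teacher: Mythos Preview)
The paper does not supply its own proof of this lemma; it is quoted verbatim from \cite[Lemma 4.3]{MP07} and used as a black box. So there is no in-paper argument to compare against.

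That said, your proof is correct and self-contained. The reduction to showing $A\subseteq p\mathbb{Z}_n$ via cardinality is sound, and the core step---producing $c\in\mathbb{Z}_n^\ast$ with $(c-1)a\in H\setminus\{0\}$---is handled cleanly by your CRT construction. The verification goes through: with $k=v_p(n)$ and $n=p^k m$, $\gcd(m,p)=1$, your choice gives $c\equiv 1\pmod m$ (hence $m\mid (c-1)a$) and $v_p(c-1)=k-1$ (hence $p^{k-1}\mid (c-1)a$ but $p^k\nmid(c-1)a$, using $p\nmid a$), so $(c-1)a$ is a nonzero multiple of $n/p$ in $\mathbb{Z}_n$. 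The unit check is immediate since $c\equiv 1\pmod q$ for every prime $q\mid m$ and $c\not\equiv 0\pmod p$ in both the $k=1$ and $k\ge 2$ cases. Your observation that the $k=1$ step genuinely fails for $p=2$ is exactly the source of the exceptional clause.

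One small cosmetic point: when $p^2\mid n$, the set $p\mathbb{Z}_n$ is itself not a transversal of $\frac{n}{p}\mathbb{Z}_n$ (it equals the subgroup when $p^2\| n$, for instance), so in that regime your argument in effect shows the hypotheses are vacuous for odd $p$. This is not a gap---the implication ``$A=p\mathbb{Z}_n$'' still follows formally---but it may be worth a sentence acknowledging that the conclusion can be vacuously true.
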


\section{The classification of distance-regular dicirculants}\label{section::3}

The main goal of this section is to prove  Theorem \ref{thm::main}, which gives a classification of distance-regular dicirculants. For  simplicity, we keep the following notation.
{\flushleft\bf Notation.} 
Denote by $\mathrm{Dic}_n=\langle \alpha,\beta \mid \alpha^{2n}=1, \beta^2=\alpha^n, \beta^{-1}\alpha\beta=\alpha^{-1}\rangle$  the dicyclic group of order $4n$. Suppose that $\Gamma=\mathrm{Dic}(n,R,T)=\mathrm{Cay}(\mathrm{Dic}_n,\alpha^R\cup \alpha^T\beta)$ is a distance-regular dicirculant, where $R,T$ are subsets of $\mathbb{Z}_{2n}$ such that $0\not\in R$, $R=-R$ and $T=n+T$. Note that  $T\neq \emptyset$ because $\Gamma$ is connected.  Denote by $k$, $\lambda$, $\mu$ and $d$ the valency, the number of common neighbors of two adjacent vertices, the number of common neighbors of two vertices at distance $2$, and the diameter of $\Gamma$, respectively. For  $j\in\{0,1,\ldots,d\}$, let $\mathcal{N}_j=N_j(1)$ denote the set of vertices  at distance $j$ from the identity vertex $1\in\mathrm{Dic}_n$ in $\Gamma$, and let $R_j=\{i\in \mathbb{Z}_{2n}\mid \alpha^i\in \mathcal{N}_j\}$ and $T_j=\{i\in \mathbb{Z}_{2n}\mid \alpha^i\beta\in \mathcal{N}_j\}$. Clearly, $R_0=\{0\}$, $T_0=\emptyset$, $R_1=R$ and $T_1=T$.

Before giving the proof of Theorem \ref{thm::main}, we first set down a sequence of lemmas.

\begin{lemma}\label{lem::neighbor}
Let  $\Gamma=\mathrm{Dic}(n,R,T)$ be a  dicirculant. Then $N(\alpha^i)=\alpha^{i+R}\cup \alpha^{i+T}\beta$ and $N(\alpha^i\beta)=\alpha^{i-T}\cup \alpha^{i+R}\beta$.
\end{lemma}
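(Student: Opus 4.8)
The plan is to compute the neighborhood of an arbitrary vertex of $\mathrm{Dic}(n,R,T)$ directly from the definition of a Cayley graph. Recall that in $\mathrm{Cay}(\mathrm{Dic}_n, \alpha^R\cup\alpha^T\beta)$ two vertices $g,h$ are adjacent precisely when $g^{-1}h$ lies in the connection set $\alpha^R\cup\alpha^T\beta$, equivalently when $h=gs$ for some $s\in\alpha^R\cup\alpha^T\beta$. So $N(g)=g\alpha^R\cup g\alpha^T\beta$, and the whole task is to rewrite these two right-translates explicitly when $g=\alpha^i$ and when $g=\alpha^i\beta$, using the defining relations $\alpha^{2n}=1$, $\beta^2=\alpha^n$, $\beta^{-1}\alpha\beta=\alpha^{-1}$ (equivalently $\beta\alpha=\alpha^{-1}\beta$, hence $\beta\alpha^k=\alpha^{-k}\beta$ for every $k$).

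First I would treat $g=\alpha^i$: then $g\alpha^R=\{\alpha^{i}\alpha^r : r\in R\}=\alpha^{i+R}$ and $g\alpha^T\beta=\{\alpha^i\alpha^t\beta : t\in T\}=\alpha^{i+T}\beta$, which immediately gives $N(\alpha^i)=\alpha^{i+R}\cup\alpha^{i+T}\beta$, where the exponents are read modulo $2n$. Next I would treat $g=\alpha^i\beta$. For the first piece, $g\alpha^R=\{\alpha^i\beta\alpha^r : r\in R\}$, and pushing $\beta$ past $\alpha^r$ via $\beta\alpha^r=\alpha^{-r}\beta$ yields $\alpha^i\alpha^{-r}\beta=\alpha^{i-r}\beta$; as $r$ ranges over $R$ this set is $\alpha^{i-R}\beta$. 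For the second piece, $g\alpha^T\beta=\{\alpha^i\beta\alpha^t\beta : t\in T\}=\{\alpha^i\alpha^{-t}\beta^2 : t\in T\}=\{\alpha^{i-t}\alpha^{n} : t\in T\}=\alpha^{i+n-T}$, using $\beta^2=\alpha^n$. Now the hypothesis $T=n+T$ (equivalently $n-T=-T$, since also $T=-T$ would not be assumed, but $n+T=T$ gives $n-T = n+(-T)$; more directly $n+(n-T)=2n-T\equiv -T$ and applying $n+T=T$ once shows $n-T=n+T-2\cdot\{\text{shift}\}$—cleanest is: $n-T = -(T-n) = -(T+n-2n) $, so simply note $n-T=n+T$ iff $2T\ni 0$ pattern)—in any case $n+T=T$ directly gives $n-T = n+T-2T$? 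I will instead just invoke $n+T=T$ to replace $n-T$: from $T=n+T$ we get $-T=-n-T=n-T$ (mod $2n$, since $-n\equiv n$), so $\alpha^{i+n-T}=\alpha^{i-T}$. Hence $N(\alpha^i\beta)=\alpha^{i-T}\cup\alpha^{i+R}\beta$.

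The proof is essentially a one-line manipulation in the group, so there is no serious obstacle; the only point requiring care is the bookkeeping with the relation $\beta\alpha^k=\alpha^{-k}\beta$ and the identities $\beta^2=\alpha^n$, $-n\equiv n\pmod{2n}$, together with the standing hypotheses $R=-R$ and $T=n+T$ on the connection set, which are exactly what make the two displayed formulas symmetric in the form stated. I would also remark that these hypotheses guarantee the connection set is inverse-closed, so $\mathrm{Dic}(n,R,T)$ is a genuine (undirected) Cayley graph, making the lemma well-posed.
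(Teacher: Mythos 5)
Your proof is correct and takes essentially the same route as the paper: compute $N(g)=g(\alpha^R\cup\alpha^T\beta)$ for $g=\alpha^i$ and $g=\alpha^i\beta$ using $\beta\alpha^k=\alpha^{-k}\beta$ and $\beta^2=\alpha^n$, then apply $R=-R$ and $T=n+T$ (with $-n\equiv n\pmod{2n}$) to rewrite $\alpha^{i-R}\beta$ and $\alpha^{i+n-T}$ in the stated form. The only blemish is the garbled digression before you settle on the clean identity $-T=-(n+T)\equiv n-T\pmod{2n}$, which you do eventually state correctly.
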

\begin{proof}
By definition, we have $N(\alpha^i)=\alpha^i(\alpha^R\cup \alpha^T\beta)=\alpha^{i+R}\cup \alpha^{i+T}\beta$ and $N(\alpha^i\beta)=\alpha^i\beta(\alpha^R\cup \alpha^T\beta)=\alpha^{i-R}\beta \cup \alpha^{i-T}\beta^2=\alpha^{i+R}\beta \cup \alpha^{i+n-T}=\alpha^{i+R}\beta \cup \alpha^{i+2n-(n+T)}=\alpha^{i-T}\cup \alpha^{i+R}\beta$ because $R=-R$ and $T=n+T$.
\end{proof}

\begin{lemma}\label{lem::common_neighbor}
Let  $\Gamma=\mathrm{Dic}(n,R,T)$ be a  dicirculant. Then $|N(\alpha^i)\cap N(\alpha^j)|=|N(\alpha^i\beta)\cap N(\alpha^j\beta)|=|R\cap (j-i+R)|+|T\cap (j-i+T)|$, and $|N(\alpha^i)\cap N(\alpha^j\beta)|=2|(j-i+R)\cap T|$.
\end{lemma}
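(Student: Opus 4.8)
\noindent\emph{Proof plan.} The plan is a direct computation from Lemma~\ref{lem::neighbor}, exploiting two elementary facts about $\mathrm{Dic}_n$: the map $i\mapsto\alpha^i$ (and likewise $i\mapsto\alpha^i\beta$) is injective on $\mathbb{Z}_{2n}$ since $\alpha$ has order $2n$, and $\mathrm{Dic}_n=\langle\alpha\rangle\cup\langle\alpha\rangle\beta$ is a disjoint union. Consequently $|\alpha^A\cap\alpha^B|=|\alpha^A\beta\cap\alpha^B\beta|=|A\cap B|$ and $\alpha^A\cap\alpha^B\beta=\emptyset$ for all $A,B\subseteq\mathbb{Z}_{2n}$, so the ``rotation part'' and the ``reflection part'' of any neighbourhood may be intersected separately. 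Moreover, since $\Gamma$ is a Cayley graph, left translation by $\alpha^{-i}$ is an automorphism, so I may assume $i=0$; writing $\ell=j-i$, the task reduces to evaluating $|N(1)\cap N(\alpha^\ell)|$, $|N(\beta)\cap N(\alpha^\ell\beta)|$ and $|N(1)\cap N(\alpha^\ell\beta)|$.

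For the first, Lemma~\ref{lem::neighbor} gives $N(1)=\alpha^R\cup\alpha^T\beta$ and $N(\alpha^\ell)=\alpha^{\ell+R}\cup\alpha^{\ell+T}\beta$; the rotation parts contribute $|R\cap(\ell+R)|$ and the reflection parts $|T\cap(\ell+T)|$, giving the claimed value. For the second, $N(\beta)=\alpha^{-T}\cup\alpha^R\beta$ and $N(\alpha^\ell\beta)=\alpha^{\ell-T}\cup\alpha^{\ell+R}\beta$; the reflection parts again contribute $|R\cap(\ell+R)|$, while the rotation parts contribute $|(-T)\cap(\ell-T)|$, which upon replacing the index $x$ by $-x$ becomes $|T\cap(-\ell+T)|$. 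Since $|T\cap(a+T)|$ counts ordered pairs in $T\times T$ with difference $a$, it is invariant under $a\mapsto-a$, so this equals $|T\cap(\ell+T)|$, and the two intersections have the same size; restoring $\ell=j-i$ yields both displayed equalities.

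For the third identity, $N(1)\cap N(\alpha^\ell\beta)=\bigl(\alpha^R\cap\alpha^{\ell-T}\bigr)\cup\bigl(\alpha^T\beta\cap\alpha^{\ell+R}\beta\bigr)$, so its size is $|R\cap(\ell-T)|+|T\cap(\ell+R)|$. The second summand is exactly $|(\ell+R)\cap T|$; the first, $|R\cap(\ell-T)|$, is the number of pairs $(r,t)\in R\times T$ with $r+t=\ell$, whereas $|(\ell+R)\cap T|$ is the number of such pairs with $t-r=\ell$. The substitution $r\mapsto-r$, a bijection of $R$ because $R=-R$, matches the two counts, so both summands equal $|(\ell+R)\cap T|$ and their sum is $2|(\ell+R)\cap T|=2|(j-i+R)\cap T|$. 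I do not expect a genuine obstacle here: every step is bookkeeping with translated subsets of $\mathbb{Z}_{2n}$, the only two substantive observations being the sign-symmetry of $|T\cap(a+T)|$ and this last use of $R=-R$; the defining relations $\beta^{-1}\alpha\beta=\alpha^{-1}$, $\beta^2=\alpha^n$ and the constraint $T=n+T$ have already been used inside Lemma~\ref{lem::neighbor}.
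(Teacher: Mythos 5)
Your proposal is correct and follows essentially the same route as the paper: a direct computation from Lemma~\ref{lem::neighbor}, splitting each neighbourhood into its $\langle\alpha\rangle$- and $\langle\alpha\rangle\beta$-parts and using the sign-symmetry of $|T\cap(a+T)|$ together with $R=-R$ for the cross term. The only difference, your normalization to $i=0$ via left translation, is cosmetic; the paper simply carries $i,j$ through the same bookkeeping.
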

\begin{proof}
Recall that $R=-R$ and $T=n+T$. By Lemma \ref{lem::neighbor}, we have
$|N(\alpha^{i})\cap N(\alpha^{j})|=|(\alpha^{i+R}\cup \alpha^{i+T}\beta) \cap (\alpha^{j+R}\cup \alpha^{j+T}\beta)|=|(i+R)\cap (j+R)|+|(i+T)\cap (j+T)|=|R\cap (j-i+R)|+|T\cap (j-i+T)|$ and $|N(\alpha^{i}\beta)\cap N(\alpha^{j}\beta)|=|(\alpha^{i+R}\beta \cup \alpha^{i-T}) \cap (\alpha^{j+R}\beta \cup \alpha^{j-T})|=|(i+R)\cap (j+R)|+|(i-T)\cap (j-T)|=|R\cap (j-i+R)|+|T\cap (j-i+T)|$. Similarly,  $|N(\alpha^{i})\cap N(\alpha^{j}\beta)|=|(\alpha^{i+R}\cup \alpha^{i+T}\beta) \cap (\alpha^{j+R}\beta \cup \alpha^{j-T})|=|(i+R)\cap (j-T)|+|(j+R)\cap (i+T)|=2|(j-i+R)\cap T|$, as desired.
\end{proof}

\begin{lemma}\label{lem::intersec}
Let  $\Gamma=\mathrm{Dic}(n,R,T)$ be a distance-regular dicirculant. Then  
$$
|N(\alpha^i\beta)\cap \alpha^R|=|N(\alpha^i\beta)\cap \alpha^T\beta|=
\left\{
\begin{array}{ll}
\frac{\lambda}{2}, & \mbox{if $i\in T$;}\\[1mm]
\frac{\mu}{2}, & \mbox{if $i\in T_2$.}\\
\end{array}\right.
$$
In particular, $\lambda$ is even, and $\mu$ is even whenever $T_2\neq \emptyset$.
\end{lemma}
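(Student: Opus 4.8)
The plan is to compute $|N(\alpha^i\beta)\cap\alpha^R|$ and $|N(\alpha^i\beta)\cap\alpha^T\beta|$ directly from the neighbourhood description in Lemma~\ref{lem::neighbor}, and then to identify these quantities with $\lambda/2$ or $\mu/2$ by pairing them against the common-neighbour counts of Lemma~\ref{lem::common_neighbor}. First I would use $N(\alpha^i\beta)=\alpha^{i-T}\cup\alpha^{i+R}\beta$. The subset of $N(\alpha^i\beta)$ lying in $\alpha^{\mathbb{Z}_{2n}}$ (the ``$\langle\alpha\rangle$-part'') is $\alpha^{i-T}$, and the subset lying in $\alpha^{\mathbb{Z}_{2n}}\beta$ is $\alpha^{i+R}\beta$. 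Intersecting with $\alpha^R$ kills the $\beta$-part, giving $|N(\alpha^i\beta)\cap\alpha^R|=|(i-T)\cap R|=|R\cap(i-T)|$; intersecting with $\alpha^T\beta$ kills the $\langle\alpha\rangle$-part, giving $|N(\alpha^i\beta)\cap\alpha^T\beta|=|(i+R)\cap T|=|R\cap(T-i)|$. Since $R=-R$, both equal $|R\cap(i-T)|$ (for the second, replace the dummy index by its negative and use $R=-R$, $T=-(T) $ after noting $T=n+T$ and the symmetry needed; more cleanly, $|(i+R)\cap T|=|(-i-R)\cap(-T)|=|(i+R)\cap(-T)|$ does \emph{not} immediately match, so I would instead observe directly that $2|(i+R)\cap T|$ and $2|(i-T)\cap R|$ are \emph{both} equal to $|N(\alpha^i)\cap N(\beta)|$ up to reindexing via Lemma~\ref{lem::common_neighbor}, forcing the two counts to coincide). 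This shows the two intersection numbers are equal.

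Next I would pin down the value. By Lemma~\ref{lem::common_neighbor}, $|N(1)\cap N(\alpha^i\beta)|=2|(i+R)\cap T|$. Now split $N(1)=\alpha^R\cup\alpha^T\beta$: a common neighbour of $1$ and $\alpha^i\beta$ is either an $\alpha^r$ with $r\in R$ or an $\alpha^t\beta$ with $t\in T$, i.e.
\[
|N(1)\cap N(\alpha^i\beta)| = |N(\alpha^i\beta)\cap\alpha^R| + |N(\alpha^i\beta)\cap\alpha^T\beta|.
\]
Combining with the equality just established, each summand equals $\tfrac12|N(1)\cap N(\alpha^i\beta)|$. Finally, if $i\in T$ then $\alpha^i\beta\in S$, so $1$ and $\alpha^i\beta$ are adjacent and $|N(1)\cap N(\alpha^i\beta)|=\lambda$ by distance-regularity; if $i\in T_2$ then $\partial(1,\alpha^i\beta)=2$ and $|N(1)\cap N(\alpha^i\beta)|=\mu$. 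This yields the displayed formula, and since the left-hand side is a nonnegative integer while the right-hand side is $\lambda/2$ (always, as $T\ne\emptyset$) or $\mu/2$ (when $T_2\ne\emptyset$), we conclude $\lambda$ is even and $\mu$ is even when $T_2\ne\emptyset$.

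I do not anticipate a serious obstacle here; this is a bookkeeping lemma. The one point needing care is the claim $|N(\alpha^i\beta)\cap\alpha^R|=|N(\alpha^i\beta)\cap\alpha^T\beta|$: the naive index manipulation must respect all three symmetries $0\notin R$, $R=-R$, $T=n+T$, and it is cleanest to derive the equality not by a change of variables but by noting that \emph{both} counts together sum to $|N(1)\cap N(\alpha^i\beta)|$ and that an analogous split from the other vertex's side gives the same pair of numbers in the opposite roles, forcing symmetry; alternatively one invokes the vertex-transitivity of $\Gamma$ together with the automorphism $\alpha^j\mapsto\alpha^j$, $\alpha^j\beta\mapsto\alpha^{j}\beta$ composed with the inversion $x\mapsto x^{-1}$ of $\mathrm{Dic}_n$, which swaps the $\langle\alpha\rangle$-part and the coset part of a neighbourhood while fixing the relevant distances. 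Either route is routine, so I would present whichever is shortest in context.
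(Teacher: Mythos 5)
Your route is the paper's: split $N(1)\cap N(\alpha^i\beta)$ into its $\alpha^R$-part and its $\alpha^T\beta$-part, show the two halves are equal, and identify the total with $\lambda$ (if $i\in T$) or $\mu$ (if $i\in T_2$); your computations $|N(\alpha^i\beta)\cap\alpha^R|=|(i-T)\cap R|$, $|N(\alpha^i\beta)\cap\alpha^T\beta|=|(i+R)\cap T|$ and the parity conclusions are all correct. The one point you flag as delicate is in fact a one-liner, and it is exactly how the paper handles it: $r\mapsto i-r$ is a bijection from $(i-T)\cap R$ onto $(i-R)\cap T$ (both sets enumerate the pairs $(r,t)\in R\times T$ with $r+t=i$), and $i-R=i+R$ because $R=-R$; neither $T=n+T$ nor any common-neighbour count is needed. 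Your fallback via Lemma~\ref{lem::common_neighbor} also works, but only with the correct pair of vertices: that lemma gives $|N(1)\cap N(\alpha^i\beta)|=2|(i+R)\cap T|$, while your coset split gives $|(i-T)\cap R|+|(i+R)\cap T|$ for the same quantity, so the two summands must agree — whereas comparing with $|N(\alpha^i)\cap N(\beta)|$, as you wrote, yields $2|(-i+R)\cap T|$, which is not visibly either summand without the very identity in question. Finally, drop the suggested "inversion" symmetry: $x\mapsto x^{-1}$ is an automorphism of $\mathrm{Cay}(G,S)$ only when $S$ is closed under conjugation (e.g.\ for abelian $G$), which is not assumed here, so that route is not sound. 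With either of the first two fixes spelled out, your proof is complete and essentially identical to the paper's.
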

\begin{proof}
By Lemma \ref{lem::neighbor},  $N(\alpha^i\beta)\cap N(1)=N(\alpha^i\beta)\cap(\alpha^R\cup \alpha^T\beta)=(N(\alpha^i\beta)\cap \alpha^R)\cup (N(\alpha^i\beta)\cap \alpha^T\beta)=(\alpha^{i-T}\cap \alpha^R)\cup (\alpha^{i-R}\beta\cap \alpha^{T}\beta)$. Since $|(i-T)\cap R|=|(i-R)\cap T|$, we deduce that
$$
|N(\alpha^i\beta)\cap N(1)|=2|N(\alpha^i\beta)\cap \alpha^R|=2|N(\alpha^i\beta)\cap \alpha^T\beta|.
$$
Note that $|N(\alpha^i\beta)\cap N(1)|=\lambda$ if $i\in T$ and $|N(\alpha^i\beta)\cap N(1)|=\mu$ if $i\in T_2$. The result follows.
\end{proof}
\begin{lemma}\label{lem::special_pair}
Let  $\Gamma=\mathrm{Dic}(n,R,T)$ be a distance-regular dicirculant. Then $|N(1)\cap N(\alpha^n)|\geq |T|$. In particular, $\lambda\geq |T|$ if $n\in R$, and $\mu\geq |T|$ if $n\not\in R$.
\end{lemma}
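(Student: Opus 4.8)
The plan is to compute $|N(1)\cap N(\alpha^n)|$ exactly by means of Lemma~\ref{lem::common_neighbor} and then read off the lower bound from the defining relation $T=n+T$. Setting $i=0$ and $j=n$ in Lemma~\ref{lem::common_neighbor} gives $|N(1)\cap N(\alpha^n)|=|R\cap(n+R)|+|T\cap(n+T)|$. Since $T=n+T$ by the standing hypothesis on the connection set, the second term equals $|T\cap T|=|T|$, so
$$
|N(1)\cap N(\alpha^n)|=|R\cap(n+R)|+|T|\geq |T|,
$$
which is the first assertion.

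For the ``in particular'' part I would distinguish two cases according to whether $\alpha^n$ (the unique involution of $\mathrm{Dic}_n$) lies in the connection set. If $n\in R$, then $\alpha^n\in\alpha^R$, so $1$ and $\alpha^n$ are adjacent in $\Gamma$; hence $|N(1)\cap N(\alpha^n)|=a_1=\lambda$ by distance-regularity, and the inequality just proved yields $\lambda\geq|T|$. If $n\notin R$, then $\alpha^n$ is not adjacent to $1$ (note that $\alpha^n\notin\alpha^T\beta$, since $\alpha^T\beta\subseteq\langle\alpha\rangle\beta$), so $\partial(1,\alpha^n)\geq 2$; on the other hand $N(1)\cap N(\alpha^n)\neq\emptyset$ because its cardinality is at least $|T|\geq 1$ (recall $T\neq\emptyset$ as $\Gamma$ is connected), so $\partial(1,\alpha^n)=2$ and therefore $|N(1)\cap N(\alpha^n)|=c_2=\mu$, giving $\mu\geq|T|$.

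There is essentially no obstacle here: the whole argument rests on the identity $T\cap(n+T)=T$, which is immediate from $T=n+T$. The only point that requires a little care is the claim $\partial(1,\alpha^n)=2$ in the second case, where one must invoke connectivity of $\Gamma$ (equivalently $T\neq\emptyset$) to ensure that a common neighbor of $1$ and $\alpha^n$ genuinely exists, thereby ruling out $\partial(1,\alpha^n)>2$.
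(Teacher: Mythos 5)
Your proposal is correct and follows essentially the same route as the paper: apply Lemma~\ref{lem::common_neighbor} with $i=0$, $j=n$ and use $T=n+T$ to get $|N(1)\cap N(\alpha^n)|=|R\cap(n+R)|+|T|\geq|T|$, then split according to whether $n\in R$. Your extra care in verifying $\partial(1,\alpha^n)=2$ when $n\notin R$ (via $T\neq\emptyset$) is a detail the paper leaves implicit, but the argument is the same.
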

\begin{proof}
By Lemma \ref{lem::common_neighbor}, $|N(1)\cap N(\alpha^n)|=|R\cap (n+R)|+|T\cap (n+T)|=|R\cap (n+R)|+|T|\geq |T|$. Note that $1$ and $\alpha^n$ are adjacent if $n\in R$, and at distance $2$ if $n\not\in R$. The result follows.
\end{proof}

Let $\omega=e^{\pi \mathbf{i}/n}$ be the primitive $2n$-th root of unity, and let  $\mathbb{F}=\mathbb{Q}(\omega)$. Suppose that  $(\mathbb{F}^{\mathbb{Z}_{2n}},\cdot)$ and $(\mathbb{F}^{\mathbb{Z}_{2n}},\ast)$ are $\mathbb{F}$-algebras defined as in Section \ref{section::2}, and that $\mathcal{F}$ is the Fourier transformation  from $(\mathbb{F}^{\mathbb{Z}_{2n}},\ast)$ to $(\mathbb{F}^{\mathbb{Z}_{2n}},\cdot)$ defined as in  \eqref{equ::fourier1}. We denote
\begin{equation}\label{equ::fourier5}
\underline{\mathbf{r}}_j(z)=(\mathcal{F}\Delta_{R_j})(z)=\sum_{i\in R_j}\omega^{iz}~~\mbox{and}~~ \underline{\mathbf{t}}_j(z)=(\mathcal{F}\Delta_{T_j})(z)=\sum_{i\in T_j}\omega^{iz},
\end{equation}
where $\Delta_{R_j}$ and $\Delta_{T_j}$ are the characteristic functions of $R_j$ and $T_j$, respectively. In particular, we denote $\underline{\mathbf{r}}=\underline{\mathbf{r}}_1=\mathcal{F}\Delta_{R}$ and $\underline{\mathbf{t}}=\underline{\mathbf{t}}_1=\mathcal{F}\Delta_{T}$. Let $\ast$ be the convolution of   $(\mathbb{F}^{\mathbb{Z}_{2n}},\ast)$ defined  as in \eqref{equ::fourier}. For  $A,B\subseteq \mathbb{Z}_{2n}$, we can verify that
\begin{equation}\label{equ::fourier2}
(\Delta_A\ast \Delta_B)(i)=|(i-A)\cap B|=|(i-B)\cap A|,~~i\in\mathbb{Z}_{2n}.
\end{equation}

\begin{lemma}\label{lem::Fourier}
Let $\Gamma=\mathrm{Dic}(n,R,T)$ be a distance-regular dicirculant. Then  $\underline{\mathbf{r}}^2+|\underline{\mathbf{t}}|^2=k+\lambda \underline{\mathbf{r}}+\mu\underline{\mathbf{r}}_2$ and $2\underline{\mathbf{rt}}=\lambda\underline{\mathbf{t}}+\mu\underline{\mathbf{t}}_2$.
\end{lemma}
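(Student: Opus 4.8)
The plan is to count, in two different ways, the number of common neighbours of the identity vertex $1$ with an arbitrary vertex of $\Gamma$, obtain two identities in the convolution algebra $(\mathbb{F}^{\mathbb{Z}_{2n}},\ast)$, and then transfer them to the pointwise algebra $(\mathbb{F}^{\mathbb{Z}_{2n}},\cdot)$ by applying the Fourier transformation $\mathcal{F}$, which is an algebra isomorphism. The first count uses Lemma \ref{lem::common_neighbor} together with the convolution identity \eqref{equ::fourier2}; the second uses distance-regularity, via the elementary fact that for two vertices $u,v$ of a distance-regular graph one has $|N(u)\cap N(v)|$ equal to $k$, $\lambda$, $\mu$, or $0$ according as $\partial(u,v)$ equals $0$, $1$, $2$, or is at least $3$ (the last case because any common neighbour of $u$ and $v$ forces $\partial(u,v)\le 2$).

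For the first identity I would set $f(z)=|N(1)\cap N(\alpha^z)|$ for $z\in\mathbb{Z}_{2n}$. On one hand, Lemma \ref{lem::common_neighbor} gives $f(z)=|R\cap(z+R)|+|T\cap(z+T)|$, and by \eqref{equ::fourier2} together with $R=-R$ this equals $(\Delta_R\ast\Delta_R)(z)+(\Delta_{-T}\ast\Delta_T)(z)$, so $f=\Delta_R\ast\Delta_R+\Delta_{-T}\ast\Delta_T$. On the other hand, by distance-regularity $f=k\Delta_0+\lambda\Delta_R+\mu\Delta_{R_2}$, since $\partial(1,\alpha^z)$ equals $0$, $1$, $2$ exactly when $z$ lies in $R_0=\{0\}$, $R_1=R$, $R_2$, while $f(z)=0$ once $\partial(1,\alpha^z)\ge 3$. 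Equating these and applying $\mathcal{F}$—using $\mathcal{F}\Delta_0\equiv 1$, $\mathcal{F}\Delta_{-T}=\overline{\underline{\mathbf{t}}}$, and $\mathcal{F}(\Delta_A\ast\Delta_B)=(\mathcal{F}\Delta_A)(\mathcal{F}\Delta_B)$—yields $\underline{\mathbf{r}}^2+|\underline{\mathbf{t}}|^2=k+\lambda\underline{\mathbf{r}}+\mu\underline{\mathbf{r}}_2$.

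For the second identity I would set $g(z)=|N(1)\cap N(\alpha^z\beta)|$. Lemma \ref{lem::common_neighbor} gives $g(z)=2|(z+R)\cap T|=2(\Delta_R\ast\Delta_T)(z)$, again via \eqref{equ::fourier2} and $R=-R$; thus $g=2(\Delta_R\ast\Delta_T)$. On the other hand $\partial(1,\alpha^z\beta)$ is never $0$, equals $1$ iff $z\in T_1=T$, equals $2$ iff $z\in T_2$, and $g(z)=0$ once $\partial(1,\alpha^z\beta)\ge 3$, so $g=\lambda\Delta_T+\mu\Delta_{T_2}$. Applying $\mathcal{F}$ to $2(\Delta_R\ast\Delta_T)=\lambda\Delta_T+\mu\Delta_{T_2}$ gives $2\underline{\mathbf{rt}}=\lambda\underline{\mathbf{t}}+\mu\underline{\mathbf{t}}_2$.

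I expect no serious obstacle: the argument is mostly bookkeeping. The only points needing care are the passage from set-intersection counts to convolutions—where one must keep straight the roles of $R=-R$ and of the non-symmetric set $T$, the latter being what makes $|\underline{\mathbf{t}}|^2=\underline{\mathbf{t}}\,\overline{\underline{\mathbf{t}}}$ (not $\underline{\mathbf{t}}^2$) appear—and the justification that $|N(u)\cap N(v)|$ is a function of $\partial(u,v)$ vanishing as soon as $\partial(u,v)\ge 3$.
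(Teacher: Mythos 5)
Your proof is correct and takes essentially the same route as the paper: both derive the identities $\Delta_R\ast\Delta_R+\Delta_T\ast\Delta_{-T}=k\Delta_0+\lambda\Delta_R+\mu\Delta_{R_2}$ and $2(\Delta_R\ast\Delta_T)=\lambda\Delta_T+\mu\Delta_{T_2}$ by evaluating the common-neighbour counts of $1$ with $\alpha^z$ and with $\alpha^z\beta$ once via Lemma \ref{lem::common_neighbor} and \eqref{equ::fourier2}, and once via distance-regularity, then apply the algebra isomorphism $\mathcal{F}$. Your extra remarks (vanishing of the counts at distance $\ge 3$, and $\mathcal{F}\Delta_{-T}=\overline{\underline{\mathbf{t}}}$ producing $|\underline{\mathbf{t}}|^2$) merely spell out steps the paper leaves implicit.
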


\begin{proof}
By Lemma \ref{lem::neighbor} and \eqref{equ::fourier2}, for every $i\in \mathbb{Z}_{2n}$,
\begin{equation}\label{equ::fourier3}
\begin{aligned}
(\Delta_R\ast \Delta_R)(i)+(\Delta_T\ast \Delta_{-T})(i)&=|R\cap (i-R)|+|T\cap (i+T)|\\
&=(k\Delta_0+\lambda\Delta_R+\mu\Delta_{R_2})(i)\\
\end{aligned}
\end{equation}
and
\begin{equation}\label{equ::fourier4}
\begin{aligned}
2(\Delta_R\ast \Delta_T)(i)&=|R\cap (i-T)|+|T\cap (i-R)|\\
&=|R\cap (i+n-T)|+|T\cap (i-R)|\\
&=(\lambda \Delta_T+\mu\Delta_{T_2})(i).
\end{aligned}
\end{equation}
Recall that the Fourier transformation $\mathcal{F}$ is an algebra isomorphism  from $(\mathbb{F}^{\mathbb{Z}_{2n}},\ast)$ to $(\mathbb{F}^{\mathbb{Z}_{2n}},\cdot)$. By applying $\mathcal{F}$ on both sides of \eqref{equ::fourier3} and \eqref{equ::fourier4}, we obtain $\underline{\mathbf{r}}^2+|\underline{\mathbf{t}}|^2=k+\lambda \underline{\mathbf{r}}+\mu\underline{\mathbf{r}}_2$ and $2\underline{\mathbf{rt}}=\lambda\underline{\mathbf{t}}+\mu\underline{\mathbf{t}}_2$, respectively.
\end{proof}

\begin{lemma}\label{lem::key0}
 There is no distance-regular dicirculant  isomorphic to a complete bipartite graph without a $1$-factor.
\end{lemma}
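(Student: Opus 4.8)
The plan is to play the special-pair bound of Lemma~\ref{lem::special_pair} on the pair $\{1,\alpha^n\}$ against the fact that $\alpha^n$ is forced to be the \emph{antipode} of $1$, using that $\mathrm{Dic}_n$ contains a unique involution.

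First I would pin down the target graph. A distance-regular graph that is a complete bipartite graph without a $1$-factor is a graph $\Gamma\cong K_{m,m}-mK_2$ with $m\ge 3$; since $\Gamma$ has $4n$ vertices, this forces $m=2n$ (hence $n\ge 2$). Such a $\Gamma$ has diameter $d=3$ and valency $k=2n-1$, and its distance-$3$ graph $\Gamma_3$ is a perfect matching, so $\Gamma$ is antipodal with antipodal classes of size exactly $2$ (it is the $2$-fold antipodal cover of $K_{2n}$). Because $\mathrm{Dic}_n$ acts regularly on $V(\Gamma)$ by left multiplication and left multiplications are graph automorphisms, they permute the antipodal classes, so these classes form an imprimitivity system $\mathcal{B}$ for $\mathrm{Dic}_n$, exactly as in the proof of Corollary~\ref{cor::DRG_dic}.

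Now I would invoke Lemma~\ref{lem::block}: the class $B\in\mathcal{B}$ containing the identity is a subgroup of $\mathrm{Dic}_n$, and it has order $2$. Since $\alpha^n$ is the unique element of order $2$ in $\mathrm{Dic}_n$, we get $B=\{1,\alpha^n\}$, and therefore $\partial_\Gamma(1,\alpha^n)=d=3$. On the other hand, $\Gamma$ is connected, so $T\neq\emptyset$, and since $T=n+T$ with $n\not\equiv 0\pmod{2n}$ we have $|T|\ge 2$; then Lemma~\ref{lem::special_pair} gives $|N(1)\cap N(\alpha^n)|\ge|T|\ge 2>0$, so $1$ and $\alpha^n$ have a common neighbor and hence $\partial_\Gamma(1,\alpha^n)\le 2$. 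This contradicts $\partial_\Gamma(1,\alpha^n)=3$ and completes the proof.

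The only step requiring a little care is the bookkeeping that $K_{m,m}-mK_2$ really is antipodal with classes of size exactly $2$ (so that $B$ is a genuine order-$2$ subgroup) and that nothing prevents $m=2n$ from being even — one cannot dispose of this case by a parity argument alone. Once these are in place, the contradiction is immediate from Lemmas~\ref{lem::block} and \ref{lem::special_pair}. Equivalently, and without mentioning the bipartition at all, one may observe that in $K_{m,m}-mK_2$ a vertex and its unique antipode have \emph{no} common neighbor, which already clashes with $|N(1)\cap N(\alpha^n)|\ge|T|>0$.
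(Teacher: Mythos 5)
Your proof is correct, and it reaches the contradiction by a genuinely different mechanism than the paper. Every ingredient you use holds: $K_{2n,2n}-2nK_2$ is a $2$-fold antipodal cover of $K_{2n}$ with diameter $3$; the antipodal classes form an imprimitivity system for the regular action, so by Lemma \ref{lem::block} the class through the identity is a subgroup of order $2$, necessarily $\{1,\alpha^n\}$ since $\alpha^n$ is the unique involution of $\mathrm{Dic}_n$; and the first assertion of Lemma \ref{lem::special_pair} (which, via Lemma \ref{lem::common_neighbor}, needs only connectivity) gives $|N(1)\cap N(\alpha^n)|\ge |T|>0$, whence $\partial(1,\alpha^n)\le 2$, contradicting $\partial(1,\alpha^n)=3$. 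The paper argues more elementarily, with no appeal to antipodality or the block machinery: the valency $2n-1$ of $K_{2n,2n}-2nK_2$ is odd, and since the connection set is inverse-closed and $\alpha^n$ is the only involution, odd valency forces $n\in R$; then Lemma \ref{lem::special_pair} yields $\lambda\ge |T|>0$, i.e.\ a triangle in a bipartite graph. (So, contrary to your closing caveat, a parity argument does dispose of the case --- parity of the valency rather than of $m$ --- once combined with Lemma \ref{lem::special_pair}.) Both proofs exploit the same tension, namely that the unique involution forces $1$ and $\alpha^n$ to be at distance at most $2$ while the target graph wants them far apart and triangle-free; the paper's route is shorter, whereas yours isolates the transparent structural obstruction (a vertex of $K_{m,m}-mK_2$ has no common neighbour with its antipode) and in fact shows more generally that no distance-regular dicirculant of diameter at least $3$ can be antipodal with fibres of size $2$.
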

\begin{proof}
Let $\Gamma=\mathrm{Dic}(n,R,T)$ be a distance-regular dicirculant.  By contradiction, assume that $\Gamma\cong K_{2n,2n}-2nK_2$. Then the valency of $\Gamma$ is equal to $2n-1$. However, this is only possible when $n\in R$ because $\alpha^n$ is the unique element of order $2$ in $\mathrm{Dic}_n$.  In this situation,  it follows from Lemma \ref{lem::special_pair} that  $\lambda\geq |T|$. As  $T\neq \emptyset$, the graph $\Gamma$ contains a triangle, which is impossible.
\end{proof}

\begin{lemma}\label{lem::key1}
There are no antipodal non-bipartite distance-regular dicirculants with diameter $3$.
\end{lemma}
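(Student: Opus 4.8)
The plan is to suppose $\Gamma=\mathrm{Dic}(n,R,T)$ is an antipodal non-bipartite distance-regular dicirculant of diameter $3$ and to derive a contradiction by forcing $R$ and $T$ into a rigid shape. By Lemma~\ref{lem::antipodal_DRG}(i), $\Gamma$ is an $r$-fold antipodal cover of a complete graph with $4n=r(k+1)$, $k=\mu(r-1)+\lambda+1$ and spectrum $\{k,\theta_1,-1,\theta_3\}$, where $\theta_{1,3}=\tfrac{\lambda-\mu}{2}\pm\delta$ and $\delta=\sqrt{k+(\tfrac{\lambda-\mu}{2})^{2}}$; here $k\ge 3$, since a cycle on $4n$ vertices has diameter $2n\neq 3$. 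The antipodal classes form an imprimitivity system for $\mathrm{Dic}_n$, so by Lemma~\ref{lem::block} the fibre $B$ through the identity is a subgroup of order $r$. First I would show $r$ is odd: if $r$ were even, $B$ would contain the unique involution $\alpha^{n}$, forcing $\partial(1,\alpha^{n})=3$ and hence $N(1)\cap N(\alpha^{n})=\emptyset$, which contradicts $|N(1)\cap N(\alpha^{n})|=|R\cap(n+R)|+|T|\ge|T|>0$ (Lemma~\ref{lem::common_neighbor}). With $r$ odd, $B=\langle\alpha^{2n/r}\rangle\le\langle\alpha\rangle$ is normal; set $m=2n/r$, which is even. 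By Lemma~\ref{lem::imprimitive}(ii) the antipodal quotient $\overline\Gamma$ has diameter $1$, so $\overline\Gamma\cong K_{4n/r}=K_{2m}$ and $k=2m-1$.

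Next I would read off the structure of $R$ and $T$. Since $\overline\Gamma=\mathrm{Cay}(\mathrm{Dic}_n/B,S/B)$ is complete (Lemma~\ref{lem::block}(i)) and $S=\alpha^{R}\cup\alpha^{T}\beta$ is disjoint from $B$, the set $S$ meets each of the $2m-1$ non-identity cosets of $B$ exactly once. Spelling this out through the cosets of $B$ inside $\langle\alpha\rangle$ and $\langle\alpha\rangle\beta$ shows that $R$ is a transversal of the nonzero cosets of $m\mathbb{Z}_{2n}$ with $R\cap m\mathbb{Z}_{2n}=\emptyset$ (so $|R|=m-1$ and $R\cup\{0\}$ is a full transversal of $m\mathbb{Z}_{2n}$), and $T$ is a full transversal of $m\mathbb{Z}_{2n}$ (so $|T|=m$). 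Because $R=-R$ and $R$ has exactly one element in the residue class $\tfrac m2\pmod m$, that element equals its own negative, hence is $n$; thus $n\in R$, and $\lambda\ge|T|=m$ by Lemma~\ref{lem::special_pair}, so $\mu(r-1)=2m-2-\lambda\le m-2$ and in particular $\lambda>\mu$. As $\lambda\neq\mu$, the quadratic $x^{2}-(\lambda-\mu)x-k$ (whose roots are $\theta_1,\theta_3$) cannot have conjugate irrational roots, since the multiplicity formulas of Lemma~\ref{lem::antipodal_DRG}(i) equate $m_1$ and $m_3$ only when $\lambda=\mu$; thus $\theta_1,\theta_3\in\mathbb{Z}$, and since $\lambda,\mu$ are even (Lemma~\ref{lem::intersec}) also $\tfrac{\lambda-\mu}{2}\in\mathbb{Z}$.

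Then I would apply the Fourier identities of Lemma~\ref{lem::Fourier}. From $R_3=m\mathbb{Z}_{2n}\setminus\{0\}$ and $T_3=\emptyset$ one has $R_2=\mathbb{Z}_{2n}\setminus(m\mathbb{Z}_{2n}\cup R)$ and $T_2=\mathbb{Z}_{2n}\setminus T$, so $\underline{\mathbf r}_2$ and $\underline{\mathbf t}_2$ are expressible through $\underline{\mathbf r}$, $\underline{\mathbf t}$ and the transform of $\Delta_{m\mathbb{Z}_{2n}}$ (which is supported on $r\mathbb{Z}_{2n}$). Substituting into the two identities of Lemma~\ref{lem::Fourier} and using Lemma~\ref{lem::Fourier1} for the transversals $R\cup\{0\}$ and $T$, I would obtain, for every $z\neq 0$: $\underline{\mathbf r}(z)=-1$ when $z\in r\mathbb{Z}_{2n}$; $\underline{\mathbf r}(z)=\tfrac{\lambda-\mu}{2}$ when $\underline{\mathbf t}(z)\neq 0$; and $\underline{\mathbf r}(z)\in\{\theta_1,\theta_3\}$ when $z\notin r\mathbb{Z}_{2n}$ and $\underline{\mathbf t}(z)=0$. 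In every case $\underline{\mathbf r}(z)$ is a rational integer, so $\mathrm{Im}(\mathcal F\Delta_R)\subseteq\mathbb{Q}$, and Lemma~\ref{lem::Fourier2} forces $R$ to be a union of orbits of $\mathbb{Z}_{2n}^{\ast}$ acting on $\mathbb{Z}_{2n}$ by multiplication.

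Finally I would combine ``$R$ is a union of orbits'' with ``$R$ is a transversal of the nonzero cosets of $m\mathbb{Z}_{2n}$''. Multiplication by $\mathbb{Z}_{2n}^{\ast}$ preserves orders and induces the action of $\mathbb{Z}_{m}^{\ast}$ on $\mathbb{Z}_{2n}/m\mathbb{Z}_{2n}$, so if the orbit $O_{d}$ of elements of order $d$ lies in $R$, then $R$ meeting each coset at most once forces $O_d$ to inject into $\mathbb{Z}_{2n}/m\mathbb{Z}_{2n}$; a short estimate with Euler's totient, using that $\gcd(d,r)$ is odd, shows this forces $\gcd(d,r)=1$, i.e.\ $d\mid m$. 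Hence $R\subseteq r\mathbb{Z}_{2n}$, and since $|R|=m-1=|r\mathbb{Z}_{2n}\setminus\{0\}|$ with $0\notin R$, we get $R=r\mathbb{Z}_{2n}\setminus\{0\}$. Then the subgraph of $\Gamma$ induced on $\langle\alpha\rangle$ is a disjoint union of $r$ cliques, namely the cosets of $r\mathbb{Z}_{2n}$, so for each $j\notin T$ Lemma~\ref{lem::common_neighbor} gives $\mu=2\,|(j+r\mathbb{Z}_{2n})\cap T|$; since no coset of $r\mathbb{Z}_{2n}$ can lie inside $T$ and none can miss $T$ (either would force $\mu=0$, resp.\ $\mu(r-1)=0$), each of the $r$ cosets contains exactly $\mu/2$ elements of $T$, so $m=r\mu/2$ and $r\mid m$ because $r$ is odd. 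But then $m\mathbb{Z}_{2n}\subseteq r\mathbb{Z}_{2n}$, so $R\cap m\mathbb{Z}_{2n}=m\mathbb{Z}_{2n}\setminus\{0\}\neq\emptyset$, contradicting $R\cap m\mathbb{Z}_{2n}=\emptyset$. The step I expect to be the main obstacle is precisely this passage from the Fourier vanishing conditions to the rigid identification $R=r\mathbb{Z}_{2n}\setminus\{0\}$, which rests on controlling the $\mathbb{Z}_{2n}^{\ast}$-orbits and their images in $\mathbb{Z}_{2n}/m\mathbb{Z}_{2n}$.
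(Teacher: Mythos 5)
Your proposal is correct, and while it follows the same skeleton as the paper (the fibre through the identity is a subgroup of $\langle\alpha\rangle$, $R\cup\{0\}$ and $T$ are transversals of $m\mathbb{Z}_{2n}$, Fourier rationality via Lemma \ref{lem::Fourier2}, then a counting contradiction), it diverges at the decisive steps. The paper first passes to a minimal counterexample and uses Lemma \ref{lem::quotient_graph} to force the fibre size to be an odd prime $p$, precisely so that the quoted Lemma \ref{lem::Fourier3} applies and yields $R\cup\{0\}=p\mathbb{Z}_{2n}$; you instead get oddness of $r$ directly from the unique involution $\alpha^{n}$ and Lemma \ref{lem::special_pair}, keep $r$ arbitrary (possibly composite), and replace Lemma \ref{lem::Fourier3} by your orbit argument. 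That step, which you flag as the main obstacle, does go through: for $O_d\subseteq R$ the image of $O_d$ in $\mathbb{Z}_{2n}/m\mathbb{Z}_{2n}\cong\mathbb{Z}_m$ consists of elements of additive order $e=d/\gcd(d,r)$, so injectivity forces $\varphi(d)=\varphi(e)$ with $e\mid d$, hence $d=e$ or $d=2e$ with $e$ odd; the latter means $\gcd(d,r)=2$, impossible for odd $r$, so $d\mid m$ and $R=r\mathbb{Z}_{2n}\setminus\{0\}$ — but this strengthening of Lemma \ref{lem::Fourier3} to composite odd index should be written out, as it is your one genuinely new ingredient. Two further differences: the paper disposes of the irrational case $\delta\notin\mathbb{Q}$ via $\lambda=\mu$ and Lemma \ref{lem::special_pair}, whereas you kill it up front by the neat observation that $R=-R$ plus $n\equiv\frac m2\pmod m$ (here $r$ odd is used) forces $n\in R$, hence $\lambda\ge|T|>\mu$; and your final contradiction ($|C\cap T|=\mu/2$ for each coset $C$ of $r\mathbb{Z}_{2n}$, so $r\mid m$, clashing with $R\cap m\mathbb{Z}_{2n}=\emptyset$) replaces the paper's edge count between $\alpha^{R}$ and $\alpha^{T_2}\beta$ forcing $k=3$. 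Both are valid; the paper's route buys off-the-shelf number theory at the cost of the quotient machinery, yours avoids the minimality and quotient arguments at the cost of proving the extended Fourier lemma yourself.
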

\begin{proof}
By contradiction, assume that $\Gamma=\mathrm{Dic}(n,R,T)$ is an antipodal non-bipartite distance-regular dicirculant of diameter $3$ with the minimum order. Let $k$ and $p$ denote the valency and the common size of antipodal classes (or fibres) of $\Gamma$, respectively. Note that $p\geq 2$ and $k\geq 4$. In fact, if $k=2$ or $3$, since $\Gamma$ is connected and $\alpha^n$ is the unique element of order $2$ in $\mathrm{Dic}_n$, we have $n=1$ and  $\Gamma\cong C_4$ or $K_4$, which is impossible. According to Lemma \ref{lem::antipodal_DRG} (i),  $k+1=\frac{4n}{p}$,  and $\Gamma$ has the intersection array
\begin{equation}\label{equ::1.2}
\{k,\mu(p-1),1;1,\mu,k\}
\end{equation}
and eigenvalues $k$, $\theta_1$, $\theta_2=-1$, $\theta_3$, where
\begin{equation}\label{equ::1.3}
\theta_1=\frac{\lambda-\mu}{2}+\delta,~~\theta_3=\frac{\lambda-\mu}{2}-\delta~~\mbox{and}~~\delta=\sqrt{k+\left(\frac{\lambda-\mu}{2}\right)^2}.
\end{equation}

Let $H=\mathcal{N}_3\cup\{1\}$. Then $H$ is an antipodal class of $\Gamma$, and  $|H|=p$. Since $\mathrm{Dic}_n$ acts regularly on  $V(\Gamma)$ by left multiplication, the antipodal classes of $\Gamma$ form an imprimtivity system for  $\mathrm{Dic}_n$. By Lemma \ref{lem::block}, $H$ is a subgroup of $\mathrm{Dic}_n$. If $p$ is not  prime, then $H$ has  a non-trivial subgroup $K$  contained in $\langle\alpha\rangle$. Let $\mathcal{B}$ denote the partition consisting of all orbits of $K$ acting on $V(\Gamma)$ by left multiplication.  As $K$ is normal in $\mathrm{Dic}_n$,  the partition $\mathcal{B}$ is also an imprimtivity system for $\mathrm{Dic}_n$, and it follows from  Lemma \ref{lem::block} (i) that the quotient graph $\Gamma_\mathcal{B}$ is a dicirculant. Observe that $\mathcal{B}$ is  an equitable partition of $\Gamma$, and each block of $\mathcal{B}$ is contained in some fibre of $\Gamma$ and is neither a single vertex nor a fibre.  By Lemma \ref{lem::quotient_graph}, $\Gamma_\mathcal{B}$ is an antipodal distance-regular graph with diameter $3$. If $\Gamma_\mathcal{B}$ is bipartite, then $\Gamma$ is also bipartite, a contradiction. Hence, $\Gamma_\mathcal{B}$ is an antipodal non-bipartite distance-regular dicirculant of diameter $3$ with smaller order than $\Gamma$, contrary to our assumption. Therefore, $p$ is a prime number.  Moreover, we assert that $\mathcal{N}_3\subseteq\langle \alpha\rangle$. In fact, if $\mathcal{N}_3\cap \langle \alpha\rangle\beta\neq \emptyset$, the group $H$ would contain some element of order $4$, and hence $4\mid p$, which is impossible. Since $H=\mathcal{N}_3\cup \{1\}$ is the subgroup of $\langle \alpha\rangle$ with order $p$, we have $p\mid 2n$ and $\mathcal{N}_3=\{\alpha^{i\frac{2n}{p}}\mid i=1,2,\ldots,p-1\}$. Hence,  $R_3= \frac{2n}{p}\mathbb{Z}_{2n}\setminus\{0\}$ and $T_3=\emptyset$.  Before going further, similarly as  in \cite[Lemma 4.4]{MP07}, we need the following claim.

\begin{claim}\label{claim::1}
The sets  $R\cup \{0\}$ and $T$ are transversals of the subgroup $\frac{2n}{p}\mathbb{Z}_{2n}$ in $\mathbb{Z}_{2n}$. In particular, $p\neq 2$ and $p\mid n$.
\end{claim}
\renewcommand\proofname{\it{Proof of Claim \ref{claim::1}}}
\begin{proof}
First assume that $|T\cap (\ell+\frac{2n}{p}\mathbb{Z}_{2n})|\geq 2$ for some $\ell\in \mathbb{Z}_{2n}$. Then there exists some $i\in\{1,\ldots,p-1\}$ such that $i\frac{2n}{p}\in T-T$. Thus  $\alpha^{i\frac{2n}{p}}\in\mathcal{N}_1\cup \mathcal{N}_2$, contrary to  $\alpha^{i\frac{2n}{p}}\in \mathcal{N}_3$. Hence, $|T\cap (\ell+\frac{2n}{p}\mathbb{Z}_{2n})|\leq 1$ for all $\ell\in \mathbb{Z}_{2n}$. Similarly, $|(R\cup \{0\})\cap (\ell+\frac{2n}{p}\mathbb{Z}_{2n})|\leq 1$ for all $\ell\in \mathbb{Z}_{2n}$. Now assume that $T\cap (\ell+\frac{2n}{p}\mathbb{Z}_{2n})=\emptyset$ for some  $\ell\in \mathbb{Z}_{2n}$. Then $\ell+\frac{2n}{p}\mathbb{Z}_{2n}\subseteq T_2$ due to $T_0=T_3=\emptyset$. Since each vertex of $\mathcal{N}_2$ has a neighbor in $\mathcal{N}_3$, there exists some $i\in\{1,\ldots,p-1\}$ such that $\alpha^{i\frac{2n}{p}}\in \mathcal{N}_3$ is adjacent to $\alpha^{\ell+\frac{2n}{p}}\beta\in \mathcal{N}_2$. This implies that  $\ell+(1-i)\frac{2n}{p}\in T$, which contradicts $T\cap (\ell+\frac{2n}{p}\mathbb{Z}_{2n})=\emptyset$. Hence, $T$ has non-empty intersection with every coset of $\frac{2n}{p}\mathbb{Z}_{2n}$ in $\mathbb{Z}_{2n}$. Similarly, $R\cup \{0\}$ has non-empty intersection with every coset of $\frac{2n}{p}\mathbb{Z}_{2n}$ in $\mathbb{Z}_{2n}$. Therefore, we conclude that  $T$ and $R\cup \{0\}$ are transversals of the subgroup $\frac{2n}{p}\mathbb{Z}_{2n}$ in $\mathbb{Z}_{2n}$. This proves the first part of the claim. For the second part, suppose to the contrary that $p=2$.  Recall that $T=n+T$ is non-empty. For any $i\in T$, there exists some $\ell\in\mathbb{Z}_{2n}$ such that $i\in \ell+\frac{2n}{p}\mathbb{Z}_{2n}=\ell+n\mathbb{Z}_{2n}$. Then  $n+i\in n+\ell+n\mathbb{Z}_{2n}=\ell+n\mathbb{Z}_{2n}$. As $n+i\in n+T=T$, we get $|T\cap (\ell+n\mathbb{Z}_{2n})|\geq 2$, which is impossible by above arguments. Therefore, $p\neq 2$, and hence $p\mid n$.
\end{proof}

By Claim \ref{claim::1}, $|R|=\frac{2n}{p}-1$ and $|T|=\frac{2n}{p}$. Since $R_2=\mathbb{Z}_{2n}\setminus(\frac{2n}{p}\mathbb{Z}_{2n}\cup R)$ and $T_2=\mathbb{Z}_{2n}\setminus T$, we have $|R_2|=(p-1)|R|$ and $|T_2|=(p-1)|T|$. Furthermore, from \eqref{equ::fourier5} we obtain $\underline{\mathbf{r}}_2=2n\Delta_0-p\Delta_{p\mathbb{Z}_{2n}}-\underline{\mathbf{r}}$
and $\underline{\mathbf{t}}_2=2n\Delta_0-\underline{\mathbf{t}}$. Thus, by Lemma \ref{lem::Fourier},
\begin{equation}\label{equ::1.4}
\left\{
\begin{aligned}
&\underline{\mathbf{r}}^2+|\underline{\mathbf{t}}|^2=k+(\lambda-\mu)\underline{\mathbf{r}}-p\mu\Delta_{p\mathbb{Z}_{2n}}+2n\mu\Delta_0,\\[1mm]
&2\underline{\mathbf{rt}}=(\lambda-\mu)\underline{\mathbf{t}}+2n\mu\Delta_0.
\end{aligned}
\right.
\end{equation}
Clearly, $\underline{\mathbf{r}}(0)=|R|=\frac{2n}{p}-1$. Moreover, by Lemma \ref{lem::Fourier1} and Claim \ref{claim::1}, we have $\underline{\mathbf{r}}(z)=-1$ for all $z\in p\mathbb{Z}_{2n}\setminus\{0\}$. Now suppose $z\not\in p\mathbb{Z}_{2n}$. By  \eqref{equ::1.4},   if $\underline{\mathbf{t}}(z)\neq 0$ then $\underline{\mathbf{r}}(z)=\frac{\lambda-\mu}{2}$, and  if $\underline{\mathbf{t}}(z)=0$ then $\underline{\mathbf{r}}(z)\in\{\theta_1,\theta_3\}$, where $\theta_1=\frac{\lambda-\mu}{2}+\delta$ and $\theta_3=\frac{\lambda-\mu}{2}-\delta$ are the two eigenvalues of $\Gamma$ given in \eqref{equ::1.3}. Putting $B=\{z\in\mathbb{Z}_{2n}\mid z\not\in p\mathbb{Z}_{2n},~\underline{\mathbf{t}}(z)=0,~\underline{\mathbf{r}}(z)=\theta_1\}$, $C=\{z\in\mathbb{Z}_{2n}\mid z\not\in p\mathbb{Z}_{2n},~\underline{\mathbf{t}}(z)=0,~\underline{\mathbf{r}}(z)=\theta_3\}$, and $D=\mathbb{Z}_{2n}\setminus(B\cup C\cup p\mathbb{Z}_{2n})$. Then
\begin{equation}\label{equ::1.5}
\underline{\mathbf{r}}(z)=\left\{
\begin{array}{ll}
\frac{2n}{p}-1, & z=0,\\[1mm]
-1, & z\in p\mathbb{Z}_{2n}\setminus\{0\},\\[1mm]
\theta_1, & z\in B,\\[1mm]
\theta_3, & z\in C,\\[1mm]
\frac{\lambda-\mu}{2}, & z\in D.
\end{array}
\right.
\end{equation}
If $\delta\in \mathbb{Q}$, then \eqref{equ::1.5} implies that $\mathrm{Im}(\underline{\mathbf{r}})\subseteq \mathbb{Q}$. By Lemma \ref{lem::Fourier2}, $R$ is a union of some orbits of  $\mathbb{Z}_{2n}^\ast$ acting on $\mathbb{Z}_{2n}$, and so is $R\cup \{0\}$. According to Claim \ref{claim::1} and Lemma \ref{lem::Fourier3}, we conclude that $R\cup \{0\}=p\mathbb{Z}_{2n}$, i.e., $R=p\mathbb{Z}_{2n}\setminus\{0\}$. Then, for each $i\in R$, $N(\alpha^i)\cap \alpha^{R_2}=\emptyset$, or equivalently, $N(\alpha^i)\cap \mathcal{N}_2\subseteq \alpha^{T_2}\beta$.  On the other hand, by Lemma \ref{lem::intersec},  $|N(\alpha^i\beta)\cap \alpha^R|=\frac{\mu}{2}$ for each $i\in T_2$. By counting the edges between $\alpha^R$ and $\alpha^{T_2}\beta$ in two ways and by \eqref{equ::1.2}, we have $|R|\mu(p-1)=|T_2|\frac{\mu}{2}$. Combining this with $|R|=\frac{2n}{p}-1$ and $|T_2|=(p-1)|T|=(p-1)\frac{2n}{p}$, we obtain $n=p$, and hence $|R|=1$ and $|T|=2$. Thus $k=|R|+|T|=3$, contrary to $k\geq 4$. If $\delta\not\in \mathbb{Q}$, the two eigenvalues $\theta_1,\theta_3$ of $\Gamma$ must have the same multiplicity, and hence $\lambda=\mu=\frac{k-1}{p}$ by Lemma \ref{lem::antipodal_DRG} (i). On the other hand, from Lemma \ref{lem::special_pair} and  $k+1=\frac{4n}{p}$, we deduce that $\lambda=\mu\geq |T|=\frac{2n}{p}=\frac{k+1}{2}$. Thus, we have $p<2$, a contradiction.

Therefore, we conclude that there are no antipodal non-bipartite distance-regular dicirculants with diameter $3$.
\end{proof}

\begin{lemma}\label{lem::key2}
There are no antipodal bipartite distance-regular dicirculants with diameter $4$.
\end{lemma}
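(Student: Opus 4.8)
The plan is to assume, for contradiction, that $\Gamma=\mathrm{Dic}(n,R,T)$ is an antipodal bipartite distance-regular dicirculant of diameter $4$, and to exploit the distinguished central involution $\alpha^n$ as a probe. By Lemma~\ref{lem::antipodal_DRG}(ii), if $\Gamma$ is an $r$-fold antipodal cover then $k=r\mu$, $4n=2r^2\mu$, and $\Gamma$ has intersection array $\{r\mu,r\mu-1,(r-1)\mu,1;1,\mu,r\mu-1,r\mu\}$; since $\Gamma$ is bipartite, $\lambda=a_1=0$, and from the array $c_4=k$. The antipodal classes of $\Gamma$ form an imprimitivity system for $\mathrm{Dic}_n$, so by Lemma~\ref{lem::block} the fibre $H=\{1\}\cup\mathcal{N}_4$ through the identity is a subgroup of $\mathrm{Dic}_n$ of order $r\ge 2$.

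The first key step is to show $r$ is odd. If $r$ were even, then $H$ would contain an element of order $2$, necessarily the unique involution $\alpha^n$; hence $\partial(1,\alpha^n)=4$ and so $N(1)\cap N(\alpha^n)=\emptyset$, contradicting $|N(1)\cap N(\alpha^n)|\ge |T|\ge 1$ from Lemma~\ref{lem::special_pair} (here $T\ne\emptyset$ because $\Gamma$ is connected). With $r$ odd, $H$ has odd order, hence is cyclic, and as every element of $\langle\alpha\rangle\beta$ has order $4$ we conclude $H\subseteq\langle\alpha\rangle$; thus $H=\{\alpha^j:j\in M\}$ where $M:=\tfrac{2n}{r}\mathbb{Z}_{2n}$ is the unique subgroup of $\mathbb{Z}_{2n}$ of order $r$, so that $R_4=M\setminus\{0\}$ and $T_4=\emptyset$. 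Next, for each nonzero $i\in M$ one has $\partial(1,\alpha^i)=4$, so Lemma~\ref{lem::common_neighbor} gives $|R\cap(i+R)|+|T\cap(i+T)|=c_4=k=|R|+|T|$; since $|R\cap(i+R)|\le|R|$ and $|T\cap(i+T)|\le|T|$, both inequalities must be equalities, forcing $R=i+R$ and $T=i+T$. As this is trivial for $i=0$, the sets $R$ and $T$ are invariant under translation by $M$, i.e.\ unions of cosets of $M$ in $\mathbb{Z}_{2n}$. Finally, $M$ is normal in $\mathrm{Dic}_n$ (being a subgroup of $\langle\alpha\rangle$) and its left cosets are exactly the fibres of $\Gamma$, so by Lemma~\ref{lem::block}(i) the antipodal quotient is $\overline{\Gamma}=\mathrm{Cay}(\mathrm{Dic}_n/M,\,S/M)$ with $S=\alpha^R\cup\alpha^T\beta$; using $R\cap M=\emptyset$ (as $0\notin R$ and $M\setminus\{0\}=R_4$) and the coset structure of $R,T$, one computes that $S/M$ consists of exactly $|R|/r+|T|/r=k/r$ elements, so $\overline{\Gamma}$ has valency $k/r$. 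On the other hand, since $d=4\ge 3$ the edges between any two distinct fibres form a perfect matching, which makes the projection $g\mapsto gM$ restrict to a bijection from the neighbourhood of each vertex onto the neighbourhood of its image; hence $\overline{\Gamma}$ has valency $k$. Therefore $k/r=k$, forcing $r=1$ and contradicting $r\ge 2$, which proves the lemma.

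A word on where the difficulty lies. In contrast with Lemma~\ref{lem::key1}, no Fourier analysis is needed here; the argument is driven entirely by the group structure of $\mathrm{Dic}_n$. The two points calling for care are: first, the observation that triangle-freeness ($\lambda=0$) together with Lemma~\ref{lem::special_pair} applied to $\alpha^n$ forces $r$ to be odd --- this is what confines the fibre $H$ to lie inside $\langle\alpha\rangle$ and makes the coset analysis possible; and second, the concluding valency count, which hinges on the fact that $d\ge 3$ forces a $1$-factor between distinct fibres, so that the antipodal quotient inherits the valency $k$ of $\Gamma$. Everything else is routine bookkeeping with the $M$-translates of $R$ and $T$.
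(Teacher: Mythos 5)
Your proof breaks at the step where you compute common neighbours of antipodal vertices. For nonzero $i\in M$ you have $\partial(1,\alpha^i)=4>2$, so $1$ and $\alpha^i$ have \emph{no} common neighbours at all; Lemma~\ref{lem::common_neighbor} therefore gives $|R\cap(i+R)|+|T\cap(i+T)|=0$, not $c_4$. The intersection number $c_4=|N_3(1)\cap N(\alpha^i)|=k$ counts the neighbours of $\alpha^i$ lying at distance $3$ from $1$, not common neighbours of $1$ and $\alpha^i$ (only for distance $\le 2$ do $\lambda$ and $\mu$ coincide with common-neighbour counts). So your conclusion that $R=i+R$ and $T=i+T$ is exactly backwards: the true consequence is $R\cap(i+R)=T\cap(i+T)=\emptyset$, i.e.\ $R\cup\{0\}$ and $T$ meet each coset of $M$ in at most one element --- the transversal-type property that the paper exploits (Claim~\ref{claim::1} in Lemma~\ref{lem::key1} and the partition counting in Lemma~\ref{lem::key2}). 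Indeed, $M$-invariance of $R$ and $T$ would force all vertices in a fibre to have identical neighbourhoods, hence to be at mutual distance $2$, contradicting $d=4$; this alone shows the step cannot be right. Consequently the coset bookkeeping, the computation of the quotient valency as $k/r$, and the final contradiction $k/r=k$ all collapse, and with them the whole second half of your argument.

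What does survive is your opening observation: by Lemma~\ref{lem::special_pair} and connectivity, $|N(1)\cap N(\alpha^n)|\ge|T|\ge 1$, so $\partial(1,\alpha^n)\le 2$ and the fibre $H$ through $1$ cannot contain the unique involution $\alpha^n$; hence $r$ is odd (a subgroup of even order would contain $\alpha^n$) and, since every element of $\langle\alpha\rangle\beta$ has order $4$, $H=M\subseteq\langle\alpha\rangle$. That is correct and is a neat shortcut past the paper's reduction to prime fibre size via a minimal counterexample and Lemma~\ref{lem::quotient_graph}. But it is only the set-up. After the corresponding structural facts, the paper still has to do the real work: identify the bipartition half, count $|R|=|T|=p\mu/2$, invoke Lemma~\ref{lem::special_pair} again to force $p=2$, and then run the Fourier-transform parity argument (Claim~\ref{claim::2}, applied to $\underline{\mathbf{r}}$ and $\underline{\mathbf{t}}$ at the points $2n/2^i$) to reach a contradiction. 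None of that is present in, or replaceable by, your valency count, so as it stands the lemma is not proved. (A minor further slip: ``odd order hence cyclic'' is not a valid inference in general, though your order-$4$ argument for $H\subseteq\langle\alpha\rangle$ makes it unnecessary.)
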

\begin{proof}
By contradiction, assume that  $\Gamma=\mathrm{Dic}(n,R,T)$ is an antipodal bipartite distance-regular dicirculant of diameter $4$ with the minimum order. Let $k$ and $p$ denote the valency and the common size of antipodal classes  of $\Gamma$, respectively. Note that $p\geq 2$ and $k\geq 4$.  Also, by Lemma \ref{lem::antipodal_DRG} (ii),
\begin{equation}\label{equ::2.1}
2n=p^2\mu~~\mbox{and}~~k=p\mu.
\end{equation}
Similarly as in the proof of Lemma \ref{lem::key1}, we assert that $p$ is prime and  $\mathcal{N}_4=\{\alpha^{i\frac{2n}{p}}\mid i=1,2,\ldots,p-1\}=\alpha^{\frac{2n}{p}\mathbb{Z}_{2n}}\setminus\{1\}$. 
Since the bipartition set $\mathcal{N}_0\cup \mathcal{N}_2\cup \mathcal{N}_4$ is a subgroup of $\mathrm{Dic}_n$ with index $2$, we have $\mathcal{N}_0\cup \mathcal{N}_2\cup \mathcal{N}_4=\langle\alpha\rangle$, or $n$ is even and $\mathcal{N}_0\cup \mathcal{N}_2\cup \mathcal{N}_4\in\{\alpha^{2\mathbb{Z}_{2n}}\cup \alpha^{2\mathbb{Z}_{2n}}\beta,\alpha^{2\mathbb{Z}_{2n}}\cup\alpha^{1+2\mathbb{Z}_{2n}}\beta\}$. If  $\mathcal{N}_0\cup \mathcal{N}_2\cup \mathcal{N}_4=\langle\alpha\rangle$, then  $R=\emptyset$, and  by Lemma \ref{lem::special_pair}, $\mu\geq |T|=|R|+|T|=k$. Combining this with \eqref{equ::2.1} yields that $p=1$, a contradiction. Therefore,  $n$ is even and $\mathcal{N}_0\cup \mathcal{N}_2\cup \mathcal{N}_4\in\{\alpha^{2\mathbb{Z}_{2n}}\cup \alpha^{2\mathbb{Z}_{2n}}\beta,\alpha^{2\mathbb{Z}_{2n}}\cup\alpha^{1+2\mathbb{Z}_{2n}}\beta\}$.

Observe that $\mathrm{Dic}(n,R,T)\cong \mathrm{Dic}(n,R,1+T)$. We may assume that $\mathcal{N}_0\cup \mathcal{N}_2\cup \mathcal{N}_4=\alpha^{2\mathbb{Z}_{2n}}\cup \alpha^{2\mathbb{Z}_{2n}}\beta$. Since $\mathcal{N}_0\cup \mathcal{N}_4=\alpha^{\frac{2n}{p}\mathbb{Z}_{2n}}$ and $T_0=\emptyset$, we have $T_4=\emptyset$ and $T_2=2\mathbb{Z}_{2n}$, and hence $\mu$ is even by Lemma \ref{lem::intersec}. Furthermore, $R_2=2\mathbb{Z}_{2n}\setminus\frac{2n}{p}\mathbb{Z}_{2n}$ and $T\cup T_3=R\cup R_3=1+2\mathbb{Z}_{2n}$. Since every pair of vertices in $\mathcal{N}_4$ are at distance $4$, the set $\alpha^{R_3}$ partitions into subsets $\alpha^{i+R}$, $i\in R_4$, and hence $|R_3|=(p-1)|R|$. Then from \eqref{equ::2.1} and $|R_3|=n-|R|$ we deduce that $|R|=\frac{p\mu}{2}$. Similarly, the set  $\alpha^{T_3}\beta$ partitions into subsets $\alpha^{i+T}\beta$, $i\in R_4$, and hence $|T_3|=(p-1)|T|$, which gives that $|T|=\frac{p\mu}{2}$. As $n\not\in R$, by Lemma \ref{lem::special_pair}, $\mu\geq |T|$, and hence $p=2$. Therefore,  $|R|=|T|=\mu=\frac{n}{2}$ and $k=|R|+|T|=n$. Since  $R_2=2\mathbb{Z}_{2n}\setminus n\mathbb{Z}_{2n}$ and $T_2=2\mathbb{Z}_{2n}$, we have $\underline{\mathbf{r}}_2=n\Delta_{n\mathbb{Z}_{2n}}-2\Delta_{2\mathbb{Z}_{2n}}$ and $\underline{\mathbf{t}}_2=n\Delta_{n\mathbb{Z}_{2n}}$. By Lemma \ref{lem::Fourier}, 
\begin{equation}\label{equ::2.2}
\left\{
\begin{aligned}
&\underline{\mathbf{r}}^2+|\underline{\mathbf{t}}|^2=n+\frac{n^2}{2}\Delta_{n\mathbb{Z}_{2n}}-n\Delta_{2\mathbb{Z}_{2n}},\\
&2\underline{\mathbf{rt}}=\frac{n^2}{2}\Delta_{n\mathbb{Z}_{2n}}.
\end{aligned}
\right.
\end{equation}
Clearly, $\underline{\mathbf{r}}(0)=\underline{\mathbf{t}}(0)=\frac{n}{2}$ and $\underline{\mathbf{r}}(n)=-|\underline{\mathbf{t}}(n)|=-\frac{n}{2}$. By \eqref{equ::2.2},  $\underline{\mathbf{r}}(z)=\underline{\mathbf{t}}(z)=0$ for all $z\in 2\mathbb{Z}_{2n}\setminus\{0,n\}$. Moreover, if $z\not\in 2\mathbb{Z}_{2n}$, then $\underline{\mathbf{r}}(z)=0$ or $\underline{\mathbf{t}}(z)=0$. For the former case, $|\underline{\mathbf{t}}(z)|=\sqrt{n}$, and for the later case, $\underline{\mathbf{r}}(z)\in\{\sqrt{n},-\sqrt{n}\}$. Putting $B=\{z\in\mathbb{Z}_{2n}\mid z\not\in 2\mathbb{Z}_{2n},~ \underline{\mathbf{t}}(z)=0,~\underline{\mathbf{r}}(z)=\sqrt{n}\}$, $C=\{z\in\mathbb{Z}_{2n}\mid z\not\in 2\mathbb{Z}_{2n}, ~\underline{\mathbf{t}}(z)=0,~\underline{\mathbf{r}}(z)=-\sqrt{n}\}$, and $D=\mathbb{Z}_{2n}\setminus(2\mathbb{Z}_{2n}\cup B\cup C)$. Then we have
\begin{equation}\label{equ::2.3}
\underline{\mathbf{r}}(z)=\left\{
\begin{array}{ll}
\frac{n}{2},&z=0,\\[1mm]
-\frac{n}{2},&z=n,\\[1mm]
0,& z\in 2\mathbb{Z}_{2n}\setminus\{0,n\},\\[1mm]
\sqrt{n},&z\in B,\\[1mm]
-\sqrt{n},&z\in C,\\[1mm]
0,&z\in D,\\[1mm]
\end{array}
\right.
~~\mbox{and}~~
|\underline{\mathbf{t}}(z)|=\left\{
\begin{array}{ll}
\frac{n}{2},&z=0,\\[1mm]
\frac{n}{2},&z=n,\\[1mm]
0,& z\in 2\mathbb{Z}_{2n}\setminus\{0,n\},\\[1mm]
0,&z\in B,\\[1mm]
0,&z\in C,\\[1mm]
\sqrt{n},&z\in D.\\
\end{array}
\right.
\end{equation}
For $E\in\{R,T\}$, let $\underline{\mathbf{e}}=\mathcal{F}\Delta_E$. Let $t$ be a positive integer such that $2^t\mid 2n$. For each $i\in \{0,1,\ldots,2^t-1\}$, let $E_i(t)=E\cap (i+2^t\mathbb{Z}_{2n})$, and $e_i(t)=|E_i(t)|$. Since $E\subseteq 1+2\mathbb{Z}_{2n}$, we see that
\begin{equation}\label{equ::2.4}
e_0(t)=e_2(t)=\cdots=e_{2^t-2}(t)=0.
\end{equation}
Similarly as in  \cite[Lemma 4.5]{MP07}, we need the following claim.
\begin{claim}\label{claim::2}
For any integer $t\geq 2$, if $\underline{\mathbf{e}}(\frac{2n}{2^i})=0$ for all $i\in\{2,\ldots,t\}$, then $e_1(t)=e_3(t)=\cdots=e_{2^t-1}(t)$.
\end{claim}
\renewcommand\proofname{\it{Proof of Claim \ref{claim::2}}}
\begin{proof}
Let $\omega=e^{\pi \mathbf{i}/n}$ denote the primitive $2n$-th root of unity. First assume that $t=2$. If $\underline{\mathbf{e}}(\frac{2n}{4})=\underline{\mathbf{e}}(\frac{n}{2})=0$, by Lemma \ref{lem::Fourier4} and \eqref{equ::2.4}, we obtain $\underline{\mathbf{e}}(\frac{n}{2})=(e_1(2)-e_3(2))\omega^{\frac{n}{2}}=(e_1(2)-e_3(2))\mathbf{i}$. Hence, $e_1(2)=e_3(2)$, and the results follows. Now take $t\geq 3$, and assume that the result holds for any $t'$ with $2\leq t'\leq t-1$. Suppose that $\underline{\mathbf{e}}(\frac{2n}{2^i})=0$ for all $i\in\{2,\ldots,t\}$. Again by Lemma \ref{lem::Fourier4}, 
\begin{equation}\label{equ::2.5}
0=\underline{\mathbf{e}}\left(\frac{2n}{2^t}\right)=\sum_{i=0}^{2^{t-1}-1}(e_i(t)-e_{i+2^{t-1}}(t))\xi^i,
\end{equation}
where $\xi=\omega^{\frac{2n}{2^t}}$. Note that the degree of the minimal polynomial for $\xi$ over $\mathbb{Q}$ is $\varphi(2^t)=2^{t-1}$, where $\varphi(\cdot)$ denotes the Euler totient function. Then \eqref{equ::2.5} implies that
\begin{equation}\label{equ::2.6}
e_i(t)=e_{i+2^{t-1}}(t),~\mbox{for all}~i\in\{0,1,\ldots,2^{t-1}-1\}.
\end{equation}
On the other hand, by induction  hypothesis, there exists some integer $c$ such that
\begin{equation}\label{equ::2.7}
e_1(t-1)=e_3(t-1)=\cdots=e_{2^{t-1}-1}(t-1)=c.
\end{equation}
Since  $2^{t-1}\mathbb{Z}_{2n}$ is the disjoint union of $2^t\mathbb{Z}_{2n}$ and $2^{t-1}+2^t\mathbb{Z}_{2n}$,  $E_i(t-1)$ is the disjoint union of $E_i(t)$ and $E_{i+2^{t-1}}(t)$, and hence $e_i(t-1)=e_i(t)+e_{i+2^{t-1}}(t)$.  Combining this with \eqref{equ::2.6} and \eqref{equ::2.7}, we obtain
$$
e_1(t)=e_3(t)=\cdots=e_{2^t-1}(t)=\frac{c}{2},
$$
and the result follows.
\end{proof}

Recall that $4|E|=4|R|=4|T|=4\mu=2n$. By \eqref{equ::2.4} and Claim \ref{claim::2},  if $\underline{\mathbf{e}}(\frac{2n}{2^i})=0$ for all $i\in\{2,\ldots,t\}$, then  $2^{t-1}$ would be a divisor of $|E|$, and hence  $2^{t+1}\mid 2n$. Let $s$ be the largest integer such that $2^s\mid 2n$. Since $2n=4\mu$ and $\mu$ is even, $s\geq 3$. By \eqref{equ::2.3}, $\underline{\mathbf{r}}(\frac{2n}{2^i})=\underline{\mathbf{t}}(\frac{2n}{2^i})=0$ for all $i\in\{2,3,\ldots,s-1\}$. Moreover, since $\frac{2n}{2^s}\not\in\{0,n\}$, $\underline{\mathbf{r}}(\frac{2n}{2^s})=0$ or $\underline{\mathbf{t}}(\frac{2n}{2^s})=0$. Thus we can choose suitable $E\in\{R,T\}$ such that  $\underline{\mathbf{e}}(\frac{2n}{2^i})=0$ for all $i\in\{2,\ldots,s\}$. However, this implies that $2^{s+1}\mid 2n$, contrary to the maximality of $s$.

Therefore, we conclude that there are no antipodal bipartite distance-regular dicirculants with diameter $4$.
\end{proof}

Now we are in a position to give the proof of Theorem \ref{thm::main}.

\renewcommand\proofname{\it{Proof of Theorem \ref{thm::main}}}
\begin{proof} 
First of all, we  show that the graphs listed in (i)--(iii) are distance-regular dicirculants. For (i): the complete graph $K_{4n}$ is distance-regular with diameter $1$, and $K_{4n}\cong \mathrm{Cay}(\mathrm{Dic}_n,\mathrm{Dic}_n\setminus\{1\})$. For (ii):  the complete multipartite graph $K_{t\times m}$ ($tm=4n$)  is distance-regular with diameter $2$. Since $m$  is a divisor of  $4n$, by elementary group theory, there exists a subgroup $H^{(m)}$ of order $m$ in $\mathrm{Dic}_n$ (cf. \cite{RD16}). Then it is easy to see that  $K_{t\times m}\cong \mathrm{Cay}(\mathrm{Dic}_n, \mathrm{Dic}_n\setminus H^{(m)})$. Now consider (iii). Suppose  that $R=-R$ and $T=n+T$ are non-empty subsets of  $1+2\mathbb{Z}_{2n}$ ($n$ is even)  such that $|R\cap T|<n$ and 
\begin{equation}\label{equ::intersec_set}
|R\cap (i+R)|+|T\cap (i+T)|=2|(j+R)\cap T|=2|R\cap T|
\end{equation}
for all $i,j\in 2\mathbb{Z}_{2n}\setminus\{0\}$. 
Let $\Gamma=\mathrm{Dic}(n,R,T)=\mathrm{Cay}(\mathrm{Dic}_n,\alpha^R\cup \alpha^T\beta)$ and  $H=\langle\alpha^2,\beta\rangle=\alpha^{2\mathbb{Z}_{2n}}\cup \alpha^{2\mathbb{Z}_{2n}}\beta$. Clearly, $\Gamma$ is a bipartite graph where the bipartition is given by $H\cup \alpha H$. Let $i_1,i_2\in 2\mathbb{Z}_{2n}$.   Since $i_2-i_1\in 2\mathbb{Z}_{2n}$, by Lemma \ref{lem::common_neighbor} and \eqref{equ::intersec_set},  we have
$|N(\alpha^{i_1})\cap N(\alpha^{i_2})|=|N(\alpha^{i_1}\beta)\cap N(\alpha^{i_2}\beta)|=|R\cap (i_2-i_1+R)|+|T\cap (i_2-i_1+T)|=2|R\cap T|$ whenever $i_1\neq i_2$.  Also,   $|N(\alpha^{i_1})\cap N(\alpha^{i_2}\beta)|=2|(i_2-i_1+R)\cap T|=2|R\cap T|$. By the arbitrariness of $i_1,i_2\in 2\mathbb{Z}_{2n}$, we assert that every pair of vertices in $H$ have exactly $2|R\cap T|$ common neighbors in $\Gamma$.  Similarly, one can check that every pair of vertices in $\alpha H$ have exactly $2|R\cap T|$ common neighbors in $\Gamma$. Moreover, by setting $i=n$ in \eqref{equ::intersec_set}, we obtain $|R\cap T|=\frac{1}{2}(|R\cap (n+R)|+|T|)>0$.  Hence, every pair of vertices in $H$ (or $\alpha H$) are at distance $2$ in $\Gamma$.
On the other hand,  $\Gamma$  cannot be a complete bipartite graph because $|R\cap T|<n$, and so must be of diameter $3$. Therefore, we conclude that $\Gamma$ is a bipartite distance-regular graph with the intersection array $\{k, k-1,k-\mu; 1,\mu,k\}$, where $k=|R|+|T|$ and $\mu=2|R\cap T|$. Moreover, we claim that $\Gamma$ is non-antipodal and non-trivial, since otherwise $\Gamma$ would be isomorphic to $K_{2n,2n}-2nK_2$, which is impossible by Lemma \ref{lem::key0}.

Conversely, suppose that $\Gamma=\mathrm{Dic}(n,R,T)$ is a  distance-regular dicirculant  not isomorphic to   $K_{4n}$ or  $K_{t\times m}$  ($tm=4n$).  By Lemma \ref{lem::key0},  $\Gamma$ is non-trivial. Furthermore, by Corollary \ref{cor::pri_DRG},  $\Gamma$ is imprimitive. Thus it suffices to consider the following three cases.

{\flushleft \bf Case A.} $\Gamma$ is antipodal but not bipartite.

By Lemma \ref{lem::imprimitive} and Corollary \ref{cor::DRG_dic}, the antipodal quotient $\overline{\Gamma}$ of $\Gamma$ is a primitive distance-regular circulant or dicirculant. Then it follows from Lemma \ref{lem::cir_DRG} and Corollary \ref{cor::pri_DRG} that $\overline{\Gamma}$ is a complete graph, a cycle of prime order, or a Paley graph of prime order.  If $\overline{\Gamma}$ is a cycle of prime order, then $\Gamma$ would be a cycle of order at least $8$, which is impossible because $\mathrm{Dic}_n$ cannot be generated by an inverse-closed subset of size two when $n>1$. If $\overline{\Gamma}$ is a Paley graph of prime order, by  Lemma \ref{lem::confer}, we also deduce a contradiction. Thus  $\overline{\Gamma}$ is a complete graph, and so $d=2$ or $3$ according to Lemma \ref{lem::imprimitive}. By Lemma \ref{lem::key1}, $d\neq 3$, whence $d=2$. However, complete multipartite graphs are the only antipodal distance-regular graphs with diameter $2$. Therefore, there are no non-trivial distance-regular dicirculants which are antipodal but not bipartite.

{\flushleft \bf Case B.} $\Gamma$ is antipodal and bipartite.

By Corollary \ref{cor::DRG_dic}, the antipodal quotient $\overline{\Gamma}$ and the halved graph $\frac{1}{2}\Gamma$ are distance-regular circulants or dicirculants. If $d$ is odd, by Lemma \ref{lem::imprimitive}, $\overline{\Gamma}$ is primitive. As in Case A, we assert that $\overline{\Gamma}$ is a complete graph. Hence, $d=3$. Considering that $\Gamma$ is antipodal and bipartite, we obtain  $\Gamma\cong K_{2n,2n}-2nK_2$, which is impossible because $\Gamma$ is non-trivial. Therefore, we may assume that $d$ is even. Then, by Lemma \ref{lem::imprimitive}, $\frac{1}{2}\Gamma$ is an  antipodal non-bipartite distance-regular circulant or dicirculant with diameter $d_{\frac{1}{2}\Gamma}=\frac{d}{2}$. Clearly, $d\neq 2$. Furthermore, by Lemma \ref{lem::key2}, $d\neq 4$. Therefore, we have $d_{\frac{1}{2}\Gamma}\geq 3$. According to the conclusion of Case A and Lemma \ref{lem::key0}, we assert that $\frac{1}{2}\Gamma$  is a circulant. 
However, by Lemma \ref{lem::cir_DRG}, this is impossible because $\frac{1}{2}\Gamma$ is antipodal, non-bipartite, and has diameter at least $3$. 

{\flushleft \bf Case C.} $\Gamma$ is bipartite but not antipodal.

By Lemma \ref{lem::imprimitive} and Corollary \ref{cor::DRG_dic}, $\frac{1}{2}\Gamma$ is a primitive distance-regular circulant or dicirculant. As in Case A,  $\frac{1}{2}\Gamma$ is a complete graph, a cycle of prime order, or a Paley graph of prime order. We claim that the later two cases cannot occur, since  $\frac{1}{2}\Gamma$ has $2n$ vertices. Thus  $\frac{1}{2}\Gamma\cong K_{2n}$, and $d=2$ or $3$. If $d=2$, then $\Gamma$ is a complete bipartite graph, contrary to our assumption. Hence, $\Gamma$ is a   non-antipodal bipartite non-trivial distance-regular graph with diameter $3$. Recall that $\Gamma=\mathrm{Dic}(n,R,T)=\mathrm{Cay}(\mathrm{Dic}_n,\alpha^R\cup \alpha^T\beta)$ where  $R=-R$ and $T=n+T$. Let $H$ be the bipartition set of $\Gamma$ containing the identity $1\in \mathrm{Dic}_n$. Note that $H=\mathcal{N}_0\cup \mathcal{N}_2$. By Lemma \ref{lem::block}, $H$ is a subgroup of $\mathrm{Dic}_n$ with index $2$.  Observe that  $\mathrm{Dic}_n$ has a unique subgroup of index $2$, namely $\langle\alpha\rangle$, if $n$ is odd, and has two more subgroups of index $2$, namely $\langle\alpha^2,\beta\rangle$ and $\langle\alpha^2,\alpha\beta\rangle$, if $n$ is even. Thus we only need to consider the following three situations.
{\flushleft \bf Subcase C.1.} $H=\langle\alpha\rangle$.

In this situation,  $R=\emptyset$, and $N(1)=\alpha^T\beta$. By Lemma \ref{lem::common_neighbor},  $|N(1)\cap N(\alpha^i)|=|T\cap (i+T)|$ for all $i\in \mathbb{Z}_{2n}$. For every $i\in \mathbb{Z}_{2n}\setminus\{0\}$,  since $\partial(1,\alpha^i)=2$, we have 
$$
\begin{aligned}
|T\cap (i+T)|=|N(1)\cap N(\alpha^i)|=\mu=|N(1)\cap N(\alpha^n)|=|T\cap (n+T)|=|T|=|N(1)|.
\end{aligned}
$$
This implies that $N(1)=N(\alpha^i)$ for all $i\in \mathbb{Z}_{2n}\setminus\{0\}$. Thus, $\Gamma$ is a  complete bipartite graph, contrary to our assumption.

{\flushleft \bf Subcase C.2.} $n$ is even, and $H=\langle\alpha^2,\beta\rangle$.

In this situation, $R$ and $T$ are non-empty subsets of $1+2\mathbb{Z}_{2n}$.  Recall that each pair of vertices in $H$ are at distance $2$ in $\Gamma$. Let $i,j\in 2\mathbb{Z}_{2n}\setminus\{0\}$. Consider the vertices $1$, $\alpha^i$, $\beta$ and $\alpha^j\beta$ of $H$. By Lemma \ref{lem::common_neighbor}, we have
$$
\begin{aligned}
\mu&=|N(1)\cap N(\alpha^i)|=|R\cap (i+R)|+|T\cap (i+T)|\\
&=|N(1)\cap N(\alpha^j\beta)|=2|(j+R)\cap T|\\
&=|N(1)\cap N(\beta)|=2|R\cap T|.
\end{aligned}
$$ 
Hence, we conclude that $|R\cap (i+R)|+|T\cap (i+T)|=2|(j+R)\cap T|=2|R\cap T|$ for all $i,j\in 2\mathbb{Z}_{2n}\setminus\{0\}$. Also note that $|R\cap T|<n$ because $\Gamma$ is not a complete bipartite graph.  The results follows.

{\flushleft \bf Subcase C.3.} $n$ is even, and $H=\langle\alpha^2,\alpha\beta\rangle$.

In this situation, $R$ and $T$ are non-empty subsets of $1+2\mathbb{Z}_{2n}$ and  $2\mathbb{Z}_{2n}$, respectively. Let $T'=1+T$. We see that $T'\subseteq 1+2\mathbb{Z}_{2n}$ and $\Gamma=\mathrm{Dic}(n,R,T)\cong \mathrm{Dic}(n,R,T')=\Gamma'$, where the corresponding graph isomorphism  $f: V(\Gamma)\rightarrow V(\Gamma')$ is defined by $f(\alpha^i)=\alpha^i$ and $f(\alpha^i\beta)=\alpha^{i+1}\beta$. Let $H'$ denote the bipartition set of $\Gamma'$ containing the identity $1\in \mathrm{Dic}_n$. It is easy to see that $H'=\langle\alpha^2,\beta\rangle$.  Using  $\Gamma'$ instead of $\Gamma$, we reduce the situation to Subcase C.2 immediately. 

We complete the proof.
\end{proof}

\begin{remark}\label{remark::main}
\emph{Let $\Gamma=\mathrm{Dic}(n,R,T)$  denote a  bipartite non-trivial distance-regular dicirculant with diameter $3$, and let $H$ denote the  bipartition set of $\Gamma$  containing the identity $1\in \mathrm{Dic}_n$. In \cite[Proposition 5.3]{DJ21}, van Dam and Jazaeri proved that if  $n$ is odd or $H$ is cyclic then $\Gamma$ does not exist, which coincides with the conclusion of Theorem \ref{thm::main} according to the discussion in Case C. Moreover, when $n$ is even and $H$ is not cyclic, they showed that  the subgraph $\mathrm{Cay}(\langle \alpha \rangle, \alpha^R)$ is actually the incidence graph of a partial geometric design with specific  parameters (cf. \cite[Proposition 5.4]{DJ21}).}
\end{remark}

\section{Further research}\label{section::4}

Let $A$ be an abelian group of order $2n$ with exactly one element $\gamma$ of order $2$. The \textit{generalized dicyclic group} $\mathrm{Dic}(A,\beta)$ is defined as the group generated by $A$ and $\beta$ where $\beta^2=\gamma$ and $\beta^{-1}\alpha\beta=\alpha^{-1}$ for all $\alpha\in A$ (see \cite[p. 229]{M94} or \cite[p. 392]{S57}).  In particular, if $A$ is a cyclic group of order $2n$, then the generalized dicyclic group $\mathrm{Dic}(A,\beta)$ coincides with the dicyclic group $\mathrm{Dic}_n$. Naturally, we  propose the following problem.
\begin{problem}\label{prob::main1}
Determine all distance-regular Cayley graphs on generalized dicyclic groups.
\end{problem}

In \cite{HD22}, the authors  determined all distance-regular Cayley graphs on generalized dicyclic groups under the condition that the connection set is minimal with respect to some element, and pointed out that complete graphs are the only  primitive distance-regular Cayley graphs on generalized dicyclic groups.

\section*{Acknowledgements}
X. Huang is supported by National Natural Science Foundation of China (Grant No. 11901540). K. C. Das is supported by National Research Foundation funded by the Korean government (Grant No. 2021R1F1A1050646). L. Lu is supported by National Natural Science Foundation of China (Grant No. 12001544) and  Natural Science Foundation of Hunan Province  (Grant No. 2021JJ40707).

\end{document}